\documentclass{article}
  \PassOptionsToPackage{numbers, sort&compress}{natbib}
  \usepackage[preprint]{neurips_2026}

\usepackage[utf8]{inputenc} %
\usepackage[T1]{fontenc}    %
\usepackage{hyperref}       %
\usepackage{url}            %
\usepackage{booktabs}       %
\usepackage{amsfonts}       %
\usepackage{nicefrac}       %
\usepackage{microtype}      %
\usepackage{xcolor}         %
\usepackage{titlesec}
\titlespacing{\section}{0pt}{5pt}{2pt}
\titlespacing{\subsection}{0pt}{4pt}{2pt}

\usepackage{amsmath,amssymb,amsthm,mathtools}
\usepackage{algorithm}
\usepackage{algorithmic}
\usepackage{wrapfig}
\usepackage{bm}
\usepackage{comment}
\usepackage{xcolor}
\usepackage{placeins}
\usepackage{xspace}
\usepackage{pgfplots}
\usepgfplotslibrary{groupplots}
\usepackage{caption}
\usepackage{subcaption}
\pgfplotsset{compat=1.17}
\usepackage{enumitem}
\usepackage{hyperref}
\usepackage{cleveref}

\setcitestyle{numbers,square}
\newtheorem{theorem}{Theorem}
\newtheorem{lemma}{Lemma}
\newtheorem{assumption}{Assumption}
\newtheorem{proposition}{Proposition}

\newtheorem{remark}{Remark}
\newtheorem{definition}{Definition}
\newtheorem{example}{Example}

\crefname{theorem}{Thm.}{Thms.}
\Crefname{theorem}{Theorem}{Theorems}
\crefname{lemma}{Lem.}{Lems.}
\Crefname{lemma}{Lemma}{Lemmas}
\crefname{proposition}{Prop.}{Props.}
\Crefname{proposition}{Proposition}{Propositions}
\crefname{corollary}{Cor.}{Cors.}
\Crefname{corollary}{Corollary}{Corollaries}
\crefname{remark}{Rem.}{Rems.}
\Crefname{remark}{Remark}{Remarks}
\crefname{definition}{Def.}{Defs.}
\Crefname{definition}{Definition}{Definitions}
\crefname{algorithm}{Alg.}{Algs.}
\Crefname{algorithm}{Algorithm}{Algorithms}
\crefname{assumption}{Assumption}{Assumptions}
\Crefname{assumption}{Assumption}{Assumptions}
\crefname{example}{Expl.}{Expls.}
\Crefname{example}{Example}{Examples}
\crefname{figure}{Fig.}{Figs.}
\Crefname{figure}{Figure}{Figures}
\crefname{section}{Sec.}{Secs.}
\Crefname{section}{Section}{Sections}

\newcommand{\mc}[1]{\mathcal{#1}}
\newcommand{\mb}[1]{\mathbb{#1}}
\renewcommand{\paragraph}[1]{\par\noindent\textbf{#1}\hspace{0.7em}\ignorespaces}

\DeclareMathOperator{\E}{\mathbb{E}}
\DeclareMathOperator{\R}{\mathbb{R}}
\DeclareMathOperator{\X}{\mathcal{X}}
\DeclareMathOperator{\Y}{\mathcal{Y}}
\DeclareMathOperator{\U}{\mathcal{U}}
\DeclareMathOperator{\co}{conv}
\DeclareMathOperator{\Tr}{Tr}

\DeclareMathOperator{\vol}{Vol}

\DeclareMathOperator{\ball}{\mathbb{B}}
\DeclareMathOperator{\sphere}{\mathbb{S}}
\DeclareMathOperator{\dx}{\mathrm{d_x}}
\DeclareMathOperator{\dy}{\mathrm{d_y}}
\DeclareMathOperator{\goldstein}{\partial_\delta}

\DeclareMathOperator{\goldsteinfull}{\partial_\delta^{\text{G}}} %
\DeclareMathOperator{\goldsteinpartial}{\partial_\delta^{\text{P}}}
\DeclareMathOperator{\goldsteinymu}{\partial_\mu^{\text{G}}}
\DeclareMathOperator{\goldsteinfullmu}{\partial_\mu^{\text{G}}}
\DeclareMathOperator{\goldsteinpartialmu}{\partial_\mu^{\text{P}}}

\newcommand{\norm}[1]{\|#1\|}
\newcommand{\inner}[2]{\langle #1, #2 \rangle}
\newcommand{\grad}{\nabla}
\newcommand{\jac}{J}%
\newcommand{\algpzos}{\hyperref[alg:partial_zero_order]{\texttt{PZOS}}\xspace}
\newcommand{\algzos}{\hyperref[alg:zero_order]{\texttt{ZOS}}\xspace}

\title{Black-Box Followers, White-Box Leaders:\\ Partial Zeroth-Order Methods for MPECs}

\author{%
  Miriam Fischer \\
  Department of Computing\\
  Imperial College London\\
  London, United Kingdom \\
  \texttt{m.fischer21@imperial.ac.uk} \\
  \And
  Dario Paccagnan \\
  Department of Computing\\
  Imperial College London\\
  London, United Kingdom \\
  \texttt{d.paccagnan@imperial.ac.uk} 
}

\begin{document}

\maketitle
\renewcommand{\thefootnote}{}
\footnotetext{This work was partially supported by: the EPSRC grant EP/Y001001/1, funded by the International Science Partnerships Fund (ISPF) and UKRI; an Imperial-MIT seed fund; a Google DeepMind Scholarship awarded to Miriam Fischer.}
\setcounter{footnote}{0}
\renewcommand{\thefootnote}{\arabic{footnote}}

\begin{abstract}
We study mathematical programs with equilibrium constraints, in which a leader knows their own cost function, but lacks a model of the followers' response. Instead, the leader can only query this response at specific points. While this setting precludes the use of gradient-based methods, existing zeroth-order approaches treat the composed objective \emph{entirely} as a black box, deploying zeroth-order tools across both the leader and follower. %
Such approaches are inefficient, as they discard information the leader already possesses about their own cost function.
In this work we instead propose to deploy zeroth-order tools \emph{only} where they are truly needed: to handle the unknown, non-smooth followers' response. Specifically, we first propose \algpzos, an algorithm that combines exact partial gradients of the leader's cost with zeroth-order Jacobian estimates of the followers' response in a chain-rule-inspired manner, and establish that it achieves a strictly lower variance bound than the black-box baseline. Second, we introduce the partial Goldstein subdifferential, a stationarity notion tailored to this composite structure, and prove convergence of our algorithm to both standard and partial Goldstein stationary points.  Finally, we validate our method on two application domains -- toll optimization in routing games and defense-attack investment in security games -- demonstrating consistent improvements over black-box baselines in convergence speed, objective value, and estimator variance, with robust performance even under few queries per iteration.
\end{abstract}
\section{Introduction}
Mathematical programs with equilibrium constraints (MPECs) arise naturally in settings where a leader optimizes their own objective while anticipating the equilibrium response of one or more followers. In many practical instances -- toll optimization in transportation networks, pricing in competitive markets, resource allocation in multi-agent systems -- the leader has a precise understanding of their own cost, but possesses no analytical model of how followers will respond. Instead, the followers' response can only be observed by querying it at specific points. For instance, in toll optimization, the toll-setter knows their revenue objective precisely but has no analytical model of how drivers respond to toll changes.

Bilevel optimization problems and MPECs are commonly tackled via gradient-based methods. These methods exploit knowledge of the followers' model to obtain gradient information on the composed objective -- for instance by differentiating through the lower-level optimality conditions~\cite{amos2017optnet, grontas2024bighype}. While highly effective, such methods are not applicable in the setting considered, where the leader has no access to the followers' model and can only observe their response through queries.

Zeroth-order methods are particularly well-suited to this setting~\cite{lin2022goldstein,cui2023mpec,kornowski2024goldsteinoptimal,liu2024goldsteinconstrained,chen2023goldsteinimproved}. Specifically, they sidestep the need for derivatives by constructing gradient estimates from sole function evaluations, and provide convergence certificates to approximate stationary points. However, existing approaches tailoring these tools to MPECs apply zeroth-order techniques to the entire composite objective -- the leader's cost evaluated at the followers' response -- treating it as a black box, even though the leader's own cost is smooth and its partial gradients are available. Such approaches discard structural information the leader already possesses, resulting in possibly high variance and slow convergence.

Against this backdrop, in this work we propose an algorithm for MPECs that employs zeroth-order tools \emph{only} where they are truly needed, namely to the followers' unknown, non-smooth response. In particular, our approach employs a chain-rule-inspired algorithm that keeps the leader's partial gradients exact, and uses zeroth-order methods solely to estimate the Jacobian of the followers' response.
Two technical challenges arise. First, since the followers' response is not available in closed form, partial gradients of the leader's cost and the Jacobian estimate of the followers' response must be evaluated at different points, rendering the resulting estimator \emph{biased}. Second, owing to the non-differentiability of the followers' response, convergence to standard notions of stationarity is not achievable, and one must resort to the more general notion of $(\delta,\varepsilon)$-Goldstein stationarity. We resolve both challenges and make the following three contributions.

\begin{enumerate}[label=C\arabic*,ref=C\arabic*,leftmargin=*]
\item \label{contributionconvergence}
\textbf{Algorithm exploiting upper-level structure.} We propose \algpzos, an algorithm that combines exact partial gradients of the leader's cost with zeroth-order Jacobian estimates of the followers' response in a chain-rule-inspired manner. Unlike \algzos~-- the black-box method that applies zeroth-order tools to the entire composed objective --, \algpzos applies zeroth-order tools only to the followers' response, fully exploiting knowledge of the leader's cost. We prove that this yields a strictly lower variance bound than the black-box baseline \algzos.
\item \label{contributiongoldstein}
\textbf{Convergence guarantees.} We prove convergence of \algpzos to both $(\delta,\varepsilon)$-Goldstein stationary points and $(\delta,\varepsilon)$-partial Goldstein stationary points of the composed objective. The latter notion, which we introduce here, is tailored to composite objectives where the leader's cost is smooth. Unlike the standard Goldstein subdifferential, which allows variation in all components of the composed objective over a neighborhood, the partial variant fixes the partial gradients of the leader's cost at the current point and allows variation only in the Jacobian of the followers' response. We provide a discussion of both notions in Appendix~\ref{sec:goldstein}. %

\item \label{contributionexperiments}
\textbf{Empirical validation.} 
Across 225 routing game instances and 427 security game instances (\Cref{sec:experiments}), \algpzos consistently outperforms the baseline \algzos in convergence speed, final objective value, and variance. Notably, \algpzos achieves robust performance from a single two-point evaluation per iteration, whereas \algzos requires averaging over multiple two-point evaluations to attain comparable results. This translates to substantially fewer oracle calls for equivalent performance.
\end{enumerate}

{The remainder of the paper is organized as follows. In \cref{sec:relatedwork}, we discuss related work. In \cref{sec:problemsetup}, we formalize the problem setting. In \cref{sec:PZOS} we present \algpzos. In \cref{sec:mainresult} we introduce the notion of (partial) Goldstein stationarity and state our main convergence result. We then validate our approach on routing and security games in \cref{sec:experiments}. Proofs and additional details are deferred to the appendix.}

\subsection{Related Work}\label{sec:relatedwork}
\noindent\textbf{Zeroth-order methods and Goldstein stationarity.}
Zeroth-order optimization addresses settings where only function evaluations are available~\cite{nesterov2017spokoiny}. Interestingly, in the non-smooth non-convex setting of interest to this work, convergence to Clarke stationary points is not achievable in finite time, and even convergence to near Clarke stationary points requires exponentially many queries~\cite{zhang2020clarkestationary, kornowski2022nearclarkestationary}. This has motivated the study of the  relaxed notion of $(\delta,\varepsilon)$-Goldstein stationarity. 
\citet{lin2022goldstein} first establish convergence to $(\delta,\varepsilon)$-Goldstein stationary points via zeroth-order smoothing%
; \citet{chen2023goldsteinimproved} improve the sample complexity %
via variance reduction, while \citet{liu2024goldsteinconstrained} extend to constrained settings. Most recently, \citet{kornowski2024goldsteinoptimal} provide an algorithm with optimal convergence rate. %
 In this context, our work aims to provide better practical performance while retaining provable convergence guarantees, as opposed to improving on these best-possible convergence rates.

\noindent\textbf{Zeroth-order methods for MPECs and bilevel optimization.} {Classical approaches to solving MPECs, e.g., {gradient-based methods~\cite{grontas2024bighype}}, interior-point methods \cite{raghunathan2005mpccinterior,benson2006interior}, sequential quadratic programming \cite{qi2000sqp,fletcher2006sqp}, and penalty/relaxation methods \cite{demiguel2005mpecrelaxation,benson2006interior,hoheisel2013mpecrelaxation} require explicit knowledge of the followers' model and are inapplicable in our setting, where the leader can only query the followers' response. Existing zeroth-order methods for MPECs~\citep{cui2023mpec,cui2023hierarchicalgames,tao2025zobilevel} reformulate the MPEC as a single-level problem and apply zero order methods to the entire composite objective, without distinguishing between the leader's known, smooth cost and the unknown, non-smooth followers' response $y^*$ -- precisely the inefficiency our work addresses}. Perhaps the closest work to ours is \citet{sow2022pzobo}, who smooth only the inner solution mapping in bilevel optimization. However, their setting differs in two key aspects. First, the lower-level problem they consider is unconstrained which allows Gaussian smoothing. Second, they assume the lower-level response to be differentiable which results in convergence guarantees to standard stationary points. Neither carries over to our setting, where the constrained lower level problem requires feasibility-preserving smoothing, and its non-differentiability necessitates the $(\delta,\varepsilon)$-Goldstein stationarity framework.
{\section{Problem Formulation}\label{sec:problemsetup}

We consider mathematical programs with equilibrium constraints, where a leader aims at minimizing a cost function $f(x,y)$, with $f: \R^{\dx} \times \R^{\dy} \to \R$, over their decision variable $x$, while anticipating the response $y$ of one or more followers. We model the followers' response as the  solution to a parametric variational inequality: given the leader's decision $x$, the followers seek $y \in \Y$ such that $\langle G(x, y), y' - y \rangle \geq 0$ for all $y' \in \Y,$ where $G: \R^{\dx} \times \R^{\dy} \to \R^{\dy}$ and $\Y$ is closed and convex. 

This formulation captures a broad range of equilibrium notions,  including Nash equilibria, making it well-suited to the applications. Throughout this manuscript, we assume that the parametric variational inequality admits a unique solution for each given $x$, which we denote $y^*(x)$. This is a minimal assumption, satisfied in a broad range of problems of interest, including Cournot competition~\cite{cui2023mpec}, traffic routing~\cite{yang2004formulation}, and security games~\cite{elkind2024csfunique,iliaev2023csf}. Crucially, since the lower level problem is constrained by $y\in\mc{Y}$, the response $y^*(x)$ is in general non-differentiable. The problem of interest thus reads as
\begin{equation}\label{eq:setting}
    \min_{x \in \R^{\dx},\, y \in \Y} \; f(x, y) \quad \text{subject to} \quad 
    \langle G(x, y), y' - y \rangle \geq 0 \quad \forall y' \in \Y.
\end{equation}
 
We work under two standard assumptions. The first is a bounded suboptimality condition, common in zeroth-order optimization~\cite{lin2022goldstein, kornowski2024goldsteinoptimal}. The second collects regularity conditions: Lipschitz continuity and differentiability of $f$ are standard in this literature~\cite{cui2023mpec, lin2022goldstein}, with the latter enabling exploitation of the leader's partial gradients. For $y^*$, we assume only Lipschitz continuity. Differentiability is not required and does not hold in general due to the presence of the constraint $y \in \Y$.
 
\begin{assumption}[Bounded suboptimality]\label{ass:bounded}
{Given an initial point $x_0\in\R^{\dx}$, there exists a constant $\Delta > 0$ such that $f(x_0, y^*(x_0)) - \inf_{x} f(x, y^*(x)) \leq \Delta$.}
\end{assumption}
\vspace*{1mm}

{\begin{assumption}[Regularity]\label{ass:lipschitz}~
\begin{enumerate}[leftmargin=*, itemsep=-2pt, topsep=0pt]
\item $f$ is $L_f$-Lipschitz with bounded partial derivatives 
$\norm{\grad_x f}, \norm{\grad_y f}\leq L_f$.
\item $f$ is differentiable with $L_g$-Lipschitz gradient.
\item $y^*(x)$ is $L_y$-Lipschitz continuous in $x$.
\end{enumerate}
\end{assumption}}

\section{Our Partial Gradient Zeroth-order Algorithm}
\label{sec:PZOS}

Existing approaches to solve~\eqref{eq:setting} treat the composed objective $F(x) = f(x, y^*(x))$ as a black box and apply zeroth-order methods to it directly~\cite{cui2023mpec}. The key idea is to construct the smooth surrogate \[\tilde F_\mu(x) \doteq \E_{u \sim \U(\ball^{\dx})}[F(x + \mu u)],\] where $\U(\ball^{\dx})$ denotes the uniform distribution over the unit ball $\{u \in \R^{\dx} : \|u\| \leq 1\}$, obtained by averaging $F$ over random perturbations of magnitude at most $\mu > 0$. This surrogate can be shown to have two crucial properties: it is smooth, enabling gradient descent, and its gradient admits an unbiased estimator via finite differences along randomly sampled directions. Specifically, the black-box approach estimates the gradient of $\tilde F_\mu$ via the two-point finite-difference estimator~\cite{shamir2017boundestimator}
\begin{equation}\label{eq:twopointzoestimate}
    \tilde g_t = \frac{\dx}{2\mu}\bigl[f(x_t + \mu v_t, y^*(x_t + \mu v_t)) - 
    f(x_t - \mu v_t, y^*(x_t - \mu v_t))\bigr]v_t,
\end{equation}
where $v_t \sim \U(\mathbb{S}^{\dx})$ is sampled uniformly from the unit sphere $\{v \in \R^{\dx} : \|v\| = 1\}$, which requires only evaluations of $f$ and $y^*$. Since the approximation error $|\tilde F_\mu(x) - F(x)|$ can be controlled by $\mu$, convergence on $\tilde F_\mu$ translates into guarantees on the original objective $F$ -- specifically to $(\delta,\varepsilon)$-Goldstein stationary points, as the gradient of $\tilde F_\mu$ lies in the Goldstein subdifferential of $F$ for $\mu \leq \delta$~\cite{lin2022goldstein}. However, this black-box treatment discards the available partial gradient information on the leader's cost $f$ -- precisely the inefficiency our work addresses.

We instead propose to replace \emph{only} the followers' response $y^*(x)$ with its smooth surrogate, as this is the only component for which gradients are unavailable, while keeping the leader's cost $f$ intact. Specifically, for a fixed parameter $\mu > 0$, we define the smoothed response
\begin{equation}\label{eq:ysmooth}
y_\mu(x) \doteq \E_{u \sim \U(\ball^{\dx})}[y^*(x + \mu u)],\quad\text{ and correspondingly }\quad F_\mu(x) \doteq f(x, y_\mu(x)).
\end{equation}
Unlike the black-box smooth surrogate, $F_\mu$ does not replace the entire objective, but evaluates the function $f$ at the smoothed response $y_\mu$, allowing to exploit the partial gradients $\grad_x f$ and $\grad_y f$.

Building on these ideas, our algorithm (\cref{alg:partial_zero_order}) proceeds as follows. At each iteration $t$, we sample a random direction $v_t$ from the unit sphere  $\U(\sphere^{\dx})$ and compute an estimate $H_t$ of the Jacobian $\jac y_\mu(x_t)$ via finite differences. Note that $\jac y_\mu(x)$ is a matrix, since $y^*$ maps into $\R^{\dy}$ rather than $\R$, and is the only quantity requiring a zeroth-order estimate:
\[
H_t = \frac{\dx}{2\mu}\bigl(y^*(x_t + \mu v_t) - y^*(x_t - \mu v_t)\bigr) 
v_t^\top.
\]
Combining this with exact partial gradients of $f$ at $(x_t, y^*(x_t))$ gives the direction
\begin{equation}\label{eq:estimatedirection}
    g_t = \grad_x f(x_t, y^*(x_t)) + H_t^\top \grad_y f(x_t, y^*(x_t)),
\end{equation}
and we update $x_{t+1} = x_t - \alpha g_t$.

\begin{algorithm}[H]
\caption{Partial Zero-Order Smoothing (PZOS)}
\begin{algorithmic}\label{alg:partial_zero_order}
\STATE \textbf{Input:} Initial $x_0 \in \R^{\dx}$, step size $\alpha$, 
smoothing parameter $\mu>0$, iterations $T$
\FOR{$t = 0, 1, \ldots, T-1$}
\STATE Sample $v_t \sim \U({\sphere^{\dx}})$ 
  \hfill\textbackslash\textbackslash  {\small~Random direction uniformly from unit sphere}
\STATE Evaluate $y^*(x_t)$, $y^*(x_t + \mu v_t)$, $y^*(x_t - \mu v_t)$ 
\STATE $H_t = \frac{\dx}{2\mu}\bigl[(y^*(x_t + \mu v_t) - y^*(x_t - \mu 
    v_t))\,v_t^\top \bigr]$ 
\hfill\textbackslash\textbackslash  {\small~Compute Jacobian estimate}
\STATE $g_t = \grad_x f(x_t, y^*(x_t)) + H_t^\top \grad_y f(x_t, y^*(x_t))$ 
\hfill\textbackslash\textbackslash  {\small~Compute gradient estimate}
\STATE $x_{t+1} = x_t - \alpha g_t$ 
\hfill\textbackslash\textbackslash  {\small~Update step (minimization); for maximization, use $+$}
\ENDFOR
\STATE \textbf{Output:} $x^R$ where $R\in\{0,1,\dots,T-1\}$ is uniformly sampled
\end{algorithmic}
\end{algorithm}
\vspace*{-3mm}
We conclude by presenting the first upshot of our approach, before moving to the main convergence result. Specifically, we show that $g_t$ achieves a strictly lower second-moment bound than its black-box counterpart $\tilde g_t$, with the gap growing with the problem dimension \mbox{$\dx$. This is formalized next.}%
\begin{wrapfigure}[12]{r}{0.34\linewidth}
  \centering
  \vspace*{-9mm} 
  \definecolor{mycolor1}{rgb}{0.00000,0.35000,0.75000}%

\begin{tikzpicture}

\begin{axis}[%
height=2.8cm,
width=0.9\linewidth,
scale only axis,
xmin=0,
xmax=300,
xlabel style={font=\color{white!15!black}},
xlabel={$\text{Problem dimension d}_\text{x}$},
ymin=7.70056329317326,
ymax=4007.70056329317,
ylabel style={font=\color{white!15!black}},
ylabel={Mean squared gradient norm},
axis background/.style={fill=white},
title style={font=\bfseries},
axis x line*=bottom,
axis y line*=right,
xmajorgrids,
ymajorgrids,
xtick = {0,50,100,150,200,250,300},
legend style={at={(0.03,0.97)}, anchor=north west, legend cell align=left, align=left, draw=white!15!black}
]

\addplot [color=mycolor1, line width=2.0pt]
  table[row sep=crcr]{%
2	0.037149373302615\\
5	0.192849048858476\\
10	0.960668636274019\\
15	1.47821633895556\\
20	3.8887408410889\\
25	5.90526489461628\\
30	6.64714088325725\\
35	9.29117319620009\\
40	12.1874155529811\\
45	16.5834001122649\\
50	24.5967426682977\\
55	26.4644685003181\\
60	25.5865944254713\\
65	35.5369775683257\\
70	28.8645491147193\\
75	47.3915301375284\\
80	56.7046298428362\\
85	55.1218350297763\\
90	69.2989311309253\\
95	78.1010014752309\\
100	90.501762174164\\
105	85.2679675210889\\
110	120.630199941407\\
115	110.410845859054\\
120	120.358777011224\\
125	138.325797758359\\
130	152.077914654021\\
135	155.314690823711\\
140	173.704404779238\\
145	182.561791986124\\
150	195.576912360159\\
155	226.051065829533\\
160	236.096473463315\\
165	234.922891865202\\
170	276.614971199217\\
175	255.457890658331\\
180	289.918981543731\\
185	341.226778557949\\
190	333.561068886118\\
195	304.052598367158\\
200	320.264864636773\\
205	335.427391700634\\
210	428.494126107553\\
215	482.859921857595\\
220	400.241022256309\\
225	453.153068879463\\
230	444.462508638929\\
235	460.796351500776\\
240	529.54156131653\\
245	552.950562631243\\
250	546.404002722256\\
255	514.874987842098\\
260	572.607852616931\\
265	588.578259880998\\
270	652.396793158414\\
275	682.175693172544\\
280	689.765437954505\\
285	658.541211019633\\
290	697.919877296392\\
295	744.367291406111\\
300	795.603027530778\\
};
\addlegendentry{$||g_t||^2$ (ours)}

\addplot [color=black, line width=2.0pt]
  table[row sep=crcr]{%
2	0.0720382310531737\\
5	0.277681155473236\\
10	1.67599750772179\\
15	3.81627222079274\\
20	5.84422543541167\\
25	11.8753042231406\\
30	14.7694389269309\\
35	20.7627347846184\\
40	26.307237348122\\
45	37.8328234740546\\
50	46.720234324135\\
55	55.9141606160325\\
60	70.9587063346862\\
65	83.0901214785249\\
70	96.468237129471\\
75	120.848556213558\\
80	145.599655309363\\
85	160.972599632997\\
90	178.118437538444\\
95	196.328267104212\\
100	242.567180452068\\
105	335.781434842264\\
110	375.222232006804\\
115	421.710653109661\\
120	484.948032654067\\
125	452.497267853968\\
130	550.629092763733\\
135	572.010204371875\\
140	673.702026230518\\
145	694.44375893695\\
150	701.601906542918\\
155	795.567801193935\\
160	832.991901851835\\
165	905.719276515107\\
170	947.068099957176\\
175	1100.13016575422\\
180	1153.64667677444\\
185	1200.53891402118\\
190	1312.65940166872\\
195	1449.58046091096\\
200	1394.94281739629\\
205	1579.84647755348\\
210	1771.40768711687\\
215	1669.32151752607\\
220	1732.03669292953\\
225	1838.72244780918\\
230	2016.1959232951\\
235	2121.2722204134\\
240	2277.6758261773\\
245	2349.41522580614\\
250	2654.36573694404\\
255	2769.03366317015\\
260	2694.75596688909\\
265	2898.7554901487\\
270	2815.20911062928\\
275	3387.58040789055\\
280	3223.45725918719\\
285	3411.91517558441\\
290	3499.01405991708\\
295	3597.33996614948\\
300	3734.08068616977\\
};
\addlegendentry{$||\tilde g_t||^2$ (baseline)}

\end{axis}
\end{tikzpicture}%
%
  \vspace*{-2mm}
  \caption{\small Empirical second moments as a function of problem dimension.} %
  \label{fig:variance_comparison}
\end{wrapfigure}
\begin{lemma}[Second-moment bound]\label{lem:variancecomparison}
Under \Cref{ass:bounded,ass:lipschitz}, let $g_t$ be the estimator 
from~\eqref{eq:estimatedirection} and let $\widetilde{g}_t$ be the black-box estimator 
from~\eqref{eq:twopointzoestimate}. Then,
\begin{align*}
    \E[\norm{g_t}^2 \mid x_t] &\leq k_2\dx L_f^2 L_y^2 + L_f^2(1+2L_y), \\
    \E[\norm{\widetilde{g}_t}^2 \mid x_t] &\leq k_2\dx L_f^2 L_y^2 + 
    k_2\dx L_f^2(1+2L_y),
\end{align*}
where $k_2\geq 1$ is a universal constant from \cite[Lem.~10]{shamir2017boundestimator}, and the first bound is no larger than the second for $\dx \geq 1$.
\end{lemma}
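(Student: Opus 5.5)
We bound $\E[\norm{g_t}^2\mid x_t]$ by expanding the square of $g_t = a + H_t^\top b$, where $a=\grad_x f(x_t,y^*(x_t))$ and $b=\grad_y f(x_t,y^*(x_t))$ are deterministic given $x_t$ and both have norm at most $L_f$ by \Cref{ass:lipschitz}. Writing
\[
\norm{g_t}^2 = \norm{a}^2 + 2\inner{a}{H_t^\top b} + \norm{H_t^\top b}^2 ,
\]
we handle the three terms separately. The first is at most $L_f^2$.

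For the quadratic term, note that $H_t^\top b = \frac{\dx}{2\mu}\inner{y^*(x_t+\mu v_t)-y^*(x_t-\mu v_t)}{b}\,v_t$. This is exactly the two-point estimator of \cite{shamir2017boundestimator} applied to the scalar function $h(x)=\inner{b}{y^*(x)}$ with $b$ frozen, which is $L_fL_y$-Lipschitz. By \cite[Lem.~10]{shamir2017boundestimator}, its second moment is at most $k_2\dx L_f^2L_y^2$.

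For the cross term, we take the expectation over $v_t$ and bound it directly, avoiding Young's inequality, which would lose a factor $\dx$. Since $\norm{y^*(x_t+\mu v_t)-y^*(x_t-\mu v_t)}\le 2\mu L_y$,
\[
|\inner{a}{H_t^\top b}| \le \tfrac{\dx}{2\mu}\,2\mu L_y L_f\,|\inner{a}{v_t}| .
\]
Using $\E|\inner{a}{v_t}|\le \sqrt{\E\inner{a}{v_t}^2}=\norm{a}/\sqrt{\dx}$, this gives a bound of order $\sqrt{\dx}L_f^2L_y$, which is not dimension-free. To recover the stated $2L_f^2L_y$, we instead use symmetry. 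We write $\E[H_t]=\jac y_\mu(x_t)$, the standard sphere-to-ball identity giving the Jacobian of the ball-smoothed response. Since $y_\mu$ is $L_y$-Lipschitz, $\norm{\jac y_\mu(x_t)}_{\mathrm{op}}\le L_y$. Therefore
\[
\E\inner{a}{H_t^\top b}=\inner{a}{\jac y_\mu(x_t)^\top b}\le L_f^2L_y ,
\]
and twice this is $2L_f^2L_y$. Summing the three pieces gives $L_f^2+2L_f^2L_y+k_2\dx L_f^2L_y^2$, which is the first bound.

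For $\tilde g_t$, we apply \cite[Lem.~10]{shamir2017boundestimator} to $F(x)=f(x,y^*(x))$. To obtain the stated split form rather than a single Lipschitz constant, we decompose the difference
\[
F(x_t+\mu v_t)-F(x_t-\mu v_t) = \bigl[f(x_+,y^*(x_+))-f(x_-,y^*(x_+))\bigr] + \bigl[f(x_-,y^*(x_+))-f(x_-,y^*(x_-))\bigr],
\]
with $x_\pm=x_t\pm\mu v_t$. The second bracket is controlled through $L_fL_y$ and yields $k_2\dx L_f^2L_y^2$. The first bracket, together with the cross term, is controlled through $L_f$ and yields $k_2\dx L_f^2(1+2L_y)$, since Shamir's lemma applies to each Lipschitz scalar piece along the sphere. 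Alternatively, one can bound $F$ as $L_f(1+L_y)$-Lipschitz and expand $(1+L_y)^2=1+2L_y+L_y^2$, which gives the stated form immediately. This second route is the one we adopt.

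The comparison is then immediate: $k_2\ge1$ and $\dx\ge1$ imply $L_f^2(1+2L_y)\le k_2\dx L_f^2(1+2L_y)$.

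The main obstacle is the cross term, specifically obtaining a dimension-free bound on $\E\inner{a}{H_t^\top b}$. This rests on the unbiasedness $\E H_t=\jac y_\mu(x_t)$ and on the operator-norm Lipschitz bound for $y_\mu$, and both should be stated carefully.
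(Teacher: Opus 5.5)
Your proposal is correct and follows the paper's proof step for step. It uses the same three-term expansion, and controls the cross term through $\E[H_t\mid x_t]=\jac y_\mu(x_t)$ together with $\|\jac y_\mu(x_t)\|_2\le L_y$. It applies Shamir's Lemma~10 to the frozen scalar map $z\mapsto \grad_y f(x_t,y^*(x_t))^\top y^*(z)$, and for $\tilde g_t$ it bounds $F$ as $L_f(1+L_y)$-Lipschitz with $(1+L_y)^2$ expanded. The only thing you take for granted is $k_2\ge 1$: the paper justifies it by noting that the two-point estimator of the linear function $h(x)=L e_1^\top x$ has second moment exactly $\dx L^2$, so any valid constant must be at least $1$; alternatively, you could simply replace $k_2$ by $\max\{k_2,1\}$.
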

\Cref{fig:variance_comparison} validates \cref{lem:variancecomparison} empirically: it plots the empirical second moments $\E[\|g_t\|^2]$ and $\E[\|\tilde g_t\|^2]$ as a function of the problem dimension $\dx$, on Stackelberg instances (\cref{sec:experiments_stackelberg}) over 3500 evaluations %
Since the bounds in \cref{lem:variancecomparison} hold uniformly in $x_t$, they extend directly to these unconditional averages, confirming that the gap between $\E[\|g_t\|^2]$ and $\E[\|\tilde g_t\|^2]$ grows with $\dx$.

{
\section{Main Result}\label{sec:mainresult}
{Since $F(x) = f(x, y^*(x))$ is non-smooth and accessible only via queries, convergence to near Clarke stationary points requires exponentially many queries~\cite{kornowski2022nearclarkestationary},} motivating the community to adopt $(\delta,\varepsilon)$-Goldstein stationarity~\cite{lin2022goldstein} as the standard target. Below, we introduce this notion along with a novel partial counterpart tailored to our composite structure, and prove convergence of \algpzos to both.%

\paragraph{Clarke gradient and Jacobian.}
Due to the non-smoothness of $F$, standard gradients are unavailable and we work instead with the Clarke generalized gradient~\cite{clarke1990generalizedgradients}, defined as the convex hull of all limiting gradients at nearby differentiable points.\footnote{Formally, the Clarke generalized gradient of a Lipschitz continuous function $h:\R^n\to\R$ at $x$ is $\partial h(x) = \co\{\lim_{x_k \to x} \grad h(x_k) : h \text{ is differentiable at } x_k\}$, and the Clarke generalized Jacobian of a Lipschitz continuous vector-valued function $h:\R^n\to \R^m$ is defined analogously as $\partial h(x) = \co\{\lim_{x_k \to x} \jac h(x_k) : h \text{ is differentiable at } x_k\}$.} For vector-valued functions such as $y^*$, the analogous object is the Clarke generalized Jacobian $\partial y^*(x)$, defined likewise. Since $f$ is smooth, Clarke's chain rule applied to $F(x) = f(x, y^*(x))$ yields
\begin{equation}\label{eq:clarkegradientF_main}
\partial F(x) = \bigl\{\grad_x f(x,y^*(x)) + M^\top \grad_y f(x,y^*(x)) : M \in \partial y^*(x)\bigr\}.
\end{equation}
This expression serves as the foundation for the stationarity notions introduced next.

\paragraph{Goldstein stationarity.}
A natural target for optimization is a point where the Clarke gradient contains an element of small norm -- a Clarke stationary point. However, as noted above, this is not achievable in finite time with only query access to $F$. The $(\delta,\varepsilon)$-Goldstein stationarity notion~\cite{lin2022goldstein} offers a tractable relaxation: rather than requiring a small element in the Clarke gradient at a single point, it requires this of the convex hull of all Clarke gradients within a $\delta$-ball around $x$, which we denote $\ball(x,\delta) = \{x' \in \R^{\dx} : \|x' - x\| \leq \delta\}$. The $\delta$-Goldstein subdifferential is defined as:
\begin{equation}\label{eq:goldstein_full_main}\goldsteinfull F(x) = \co\Bigl\{\grad_x f(z,y^*(z)) + M^\top\grad_y f(z,y^*(z)) : z\in\ball(x,\delta),\, M\in\partial y^*(z)\Bigr\}.
\end{equation}
In this expression, both the partial gradients of $f$ and the Clarke Jacobian of $y^*$ vary as $z$ ranges over the $\delta$-ball. Since in our setting the partial gradients of $f$ are known and smooth, it is natural to consider a variant that fixes them at the center point and allows variation only in the Clarke Jacobian of $y^*$. This leads to the novel $\delta$-\emph{partial Goldstein subdifferential}:
\begin{equation}\label{eq:goldstein_partial_main}
\goldsteinpartial F(x) = \Bigl\{\grad_x f(x,y^*(x)) + M^\top\grad_y f(x,y^*(x)) : M\in\co\bigl\{\cup_{z\in\ball(x,\delta)}\partial y^*(z)\bigr\}\Bigr\}.
\end{equation}

We are now ready to define the corresponding stationarity notions, which require a small-norm element not in the Clarke gradient itself, but in the corresponding $\delta$-Goldstein subdifferential.

\begin{definition}[Goldstein stationarity]\label{def:goldstein}
For $\delta,\varepsilon > 0$:
\begin{itemize}[leftmargin=*]
    \item $x$ is a $(\delta,\varepsilon)$-Goldstein stationary point of $F$ if 
    $\min\{\norm{g}:g\in\goldsteinfull F(x)\}\leq\varepsilon$.
    \item $x$ is a $(\delta,\varepsilon)$-partial Goldstein stationary point of 
    $F$ if $\min\{\norm{g}:g\in\goldsteinpartial F(x)\}\leq\varepsilon$.
\end{itemize}
\end{definition}

The two notions are conceptually distinct: $\goldsteinfull F(x)$ varies all components of the Clarke gradient of $F$ over the $\delta$-ball, whereas $\goldsteinpartial F(x)$ anchors the smooth part and varies only the uncertain part. Crucially, neither notion is in general stronger than the other -- a point may be $(\delta,\varepsilon)$-Goldstein stationary without being $(\delta,\varepsilon)$-partial Goldstein stationary, and vice versa. {In particular, whenever $x$ lies within a $\delta$-neighborhood of a Clarke stationary point, the Goldstein subdifferential at $x$ contains zero, making $x$ trivially $(\delta,\varepsilon)$-Goldstein stationary for every $\varepsilon\ge 0$ regardless of the objective gap. We provide Example~\ref{ex:full_not_partial} where the partial Goldstein subdifferential does not suffer from this issue.} Which stationary notion provides more meaningful convergence guarantees depends on the problem instance and interaction between $x$, $y^*$, and the outer objective $f$; we provide a discussion of both notions in Appendix~\ref{sec:goldstein}. Interestingly, our main result shows convergence to both notions simultaneously -- a stronger guarantee than convergence to either one alone.

\begin{theorem}[Convergence of \algpzos]\label{thm:convergencegoldsteinpoint}
Let $x^R$ be chosen uniformly at random from the iterates $\{x_0, \ldots, x_{T-1}\}$ of \cref{alg:partial_zero_order} under \cref{ass:bounded,ass:lipschitz}, with smoothing parameter $\mu < 1$, step size $\alpha = \Theta(\mu^{1/2}\,/\,(T^{1/2}\,\dx^{3/4}))$. Let $C_p = L_g(1+L_y)L_y$ and $C_f = (1+L_y)L_g(1+2L_y)$. Then:

\begin{itemize}[leftmargin=*]
    \item \textbf{Partial Goldstein stationarity:} Choosing $\mu = \min \left(\delta, \tfrac{\varepsilon}{\sqrt{2}C_p}\right)$, after $T = \mc{O}\!\left(\dx^{3/2}/(\mu\,\varepsilon^4)\right)$ iterations, $x^R$ is, in expectation, a $(\delta,\varepsilon)$-partial Goldstein 
    stationary point of $F$, that is, 
    \[\E\left[ \min\left\{ \|g\| : g \in \goldsteinpartial F(x^R) \right\} \right] \leq \varepsilon\]
    
    \item \textbf{Goldstein stationarity:} Choosing $\mu = \min \left(\delta, \tfrac{\varepsilon}{2C_f}\right)$, after $T = \mc{O}\!\left(\dx^{3/2}/(\mu\,\varepsilon^4)\right)$ iterations, $x^R$ is, in expectation, a $(\delta,\varepsilon)$-Goldstein stationary point of $F$, that is,
    \[\E\left[ \min\left\{ \|g\| : g \in \goldsteinfull F(x^R) \right\} 
    \right] \leq \varepsilon.\]
\end{itemize}
\end{theorem}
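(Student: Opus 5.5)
The plan is to analyze \algpzos as biased stochastic gradient descent on the smooth surrogate $F_\mu(x)=f(x,y_\mu(x))$.

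\textbf{Surrogate properties.} First I collect the standard facts on $y_\mu$ from uniform-ball smoothing of the $L_y$-Lipschitz map $y^*$:
\begin{itemize}[leftmargin=*]
\item $\|y_\mu(x)-y^*(x)\|\le \mu L_y$;
\item $y_\mu$ is differentiable with $\|\jac y_\mu\|\le L_y$;
\item $\jac y_\mu$ is $c\sqrt{\dx}L_y/\mu$-Lipschitz;
\item $\E_{v_t}[H_t\mid x_t]=\jac y_\mu(x_t)$, since the sphere identity of \cite{shamir2017boundestimator} applies coordinatewise.
\end{itemize}
With these, $F_\mu$ is differentiable with $\grad F_\mu(x)=\grad_x f(x,y_\mu(x))+\jac y_\mu(x)^\top\grad_y f(x,y_\mu(x))$. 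Combining $\|\grad f\|\le L_f$ and the $L_g$-Lipschitz gradient with the bounds above, $\grad F_\mu$ is $L_\mu$-Lipschitz with $L_\mu=\mathcal O(\sqrt{\dx}/\mu)$ for $\mu<1$.

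\textbf{Bias.} The conditional mean of $g_t$ is $\bar g_t=\grad_x f(x_t,y^*(x_t))+\jac y_\mu(x_t)^\top\grad_y f(x_t,y^*(x_t))$. This differs from $\grad F_\mu(x_t)$ only because $f$'s partial gradients are evaluated at $y^*$ rather than at $y_\mu$. By the $L_g$-smoothness of $f$,
\[
\|\bar g_t-\grad F_\mu(x_t)\|\le L_g(1+L_y)\,\mu L_y=C_p\mu \doteq b .
\]

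\textbf{Descent.} Using $L_\mu$-smoothness, $x_{t+1}=x_t-\alpha g_t$, and \Cref{lem:variancecomparison} (second moment $\le \sigma^2=\mathcal O(\dx)$), I get
\[
\E F_\mu(x_{t+1})\le \E F_\mu(x_t)-\alpha\,\E\langle\grad F_\mu(x_t),\bar g_t\rangle+\tfrac{L_\mu\alpha^2}{2}\sigma^2 .
\]
Since $\langle a,a+e\rangle\ge \tfrac12\|a\|^2-\tfrac12\|e\|^2$, telescoping gives
\[
\frac1T\sum_t\E\|\grad F_\mu(x_t)\|^2\le \frac{2(\Delta+2L_f L_y\mu)}{\alpha T}+L_\mu\alpha\sigma^2+b^2 .
\]
Here $|F_\mu-F|\le L_fL_y\mu$ turns \Cref{ass:bounded} into a bound for $F_\mu$. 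The triangle inequality $\|a+e\|^2\le 2\|a\|^2+2\|e\|^2$ converts this into a bound on $\E\|\bar g_R\|^2$ with constant $2b^2$. Balancing $\alpha$ gives $\alpha\propto\sqrt{\mu/(T\dx^{3/2})}$, and the first two terms become $\mathcal O(\dx^{3/4}/\sqrt{\mu T})$. These are $\le\varepsilon^2/2$ once $T=\mathcal O(\dx^{3/2}/(\mu\varepsilon^4))$. Jensen then finishes each case.

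\textbf{Membership in the subdifferentials.} For partial stationarity, I must show $\jac y_\mu(x)\in\co\{\cup_{z\in\ball(x,\mu)}\partial y^*(z)\}$, so that $\bar g_t\in\goldsteinpartial F(x_t)$ for $\mu\le\delta$. The argument: $\jac y_\mu(x)=\E_u[\jac y^*(x+\mu u)]$ because $y^*$ is differentiable a.e. (Rademacher), and an expectation of elements of a compact convex set stays in its closed convex hull. Then $\E\min_{g\in\goldsteinpartial F(x^R)}\|g\|\le\sqrt{\E\|\bar g_R\|^2}\le\sqrt{\varepsilon^2/2+2b^2}\le\varepsilon$ with $\mu\le\varepsilon/(\sqrt2C_p)$. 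The constants need to be arranged exactly to match this; this is the tightest bookkeeping.

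For full stationarity, I instead show $\grad F_\mu(x)$ is within $C_f\mu$ of $\goldsteinfull F(x)$. Writing $\jac y_\mu(x)=\E_u[\jac y^*(z_u)]$ with $z_u=x+\mu u$, compare $\grad F_\mu(x)$ with $\E_u[\grad_xf(z_u,y^*(z_u))+\jac y^*(z_u)^\top\grad_yf(z_u,y^*(z_u))]\in\goldsteinfull F(x)$. The error involves $\|\grad f(x,y_\mu(x))-\grad f(z_u,y^*(z_u))\|\le L_g(\mu+\mu L_y+\mu L_y)$, multiplied by $(1+L_y)$, which gives exactly $C_f\mu$. The result is then $\E\,\mathrm{dist}(0,\goldsteinfull F(x^R))\le\sqrt{\E\|\grad F_\mu(x^R)\|^2}+C_f\mu\le\varepsilon$ with $\mu\le\varepsilon/(2C_f)$.

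The main obstacle is the measurability and closure step justifying $\jac y_\mu(x)\in\co\bigcup_z\partial y^*(z)$, together with handling the bias cleanly in the descent inequality without losing the stated constants.
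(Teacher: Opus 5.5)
Most of your plan matches the paper's proof. The shared pieces are biased descent on $F_\mu$, the bias bound $C_p\mu$ from evaluating $\grad f$ at $y^*(x_t)$ instead of $y_\mu(x_t)$, and the Rademacher argument giving $\jac y_\mu(x)=\E_u[\jac y^*(x+\mu u)]\in\co\bigcup_{z\in\ball(x,\mu)}\partial y^*(z)$. For full stationarity you use an averaged version of the paper's Carath\'eodory comparison. That is fine and gives the same $C_f\mu$, provided you note that $\co\bigcup_{z\in\ball(x,\mu)}\partial F(z)$ is compact by upper semicontinuity of $\partial F$, so the expectation stays in it.

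\textbf{The gap is in the partial-stationarity bookkeeping, and it is not cosmetic.} You first bound $\frac1T\sum_t\E\|\grad F_\mu(x_t)\|^2\le A+b^2$, where $A$ collects the optimization terms. You then pass to $\bar g_t$ through $\|\bar g_t\|^2\le 2\|\grad F_\mu(x_t)\|^2+2b^2$. This puts the bias into $\E\|\bar g_R\|^2$ with weight at least $2$, and in fact $4$ once you carry the $b^2$ from the descent bound.

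Under the stated choice $\mu=\min(\delta,\varepsilon/(\sqrt2 C_p))$ you only know $b=C_p\mu\le\varepsilon/\sqrt2$, i.e.\ $2b^2\le\varepsilon^2$. So your final step $\sqrt{\varepsilon^2/2+2b^2}\le\varepsilon$ is false, since the left side can equal $\sqrt{3/2}\,\varepsilon$. When $\mu=\varepsilon/(\sqrt2 C_p)$, the bound your route delivers on $\E\|\bar g_R\|^2$ exceeds $\varepsilon^2$ for every $T$. As written, your argument proves the partial claim only for a smaller parameter such as $\mu\le\varepsilon/(2\sqrt2\,C_p)$, not for the $\mu$ in the statement.

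\textbf{The fix is to expand the inner product around $\bar g_t$ rather than around $\grad F_\mu(x_t)$.} This is what the paper does. Write $b_t=\bar g_t-\grad F_\mu(x_t)$. Then
\[
\langle \grad F_\mu(x_t),\bar g_t\rangle=\|\bar g_t\|^2-\langle b_t,\bar g_t\rangle\ge \tfrac12\|\bar g_t\|^2-\tfrac12 C_p^2\mu^2 .
\]
Telescoping now gives $\frac1T\sum_t\E\|\bar g_t\|^2\le A+C_p^2\mu^2$ directly, with no factor-two loss. The condition $C_p^2\mu^2\le\varepsilon^2/2$ is exactly $\mu\le\varepsilon/(\sqrt2 C_p)$, and requiring $A\le\varepsilon^2/2$ fixes $T$.

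A smaller point in the full case: taking $A\le\varepsilon^2/2$ is not enough there either. Your bound $\sqrt{A+b^2}+C_f\mu$ then only yields $\tfrac54\varepsilon$. You need $A\le\varepsilon^2/8$ together with $b^2\le C_p^2\varepsilon^2/(4C_f^2)<\varepsilon^2/16$, which uses $C_f/C_p=(1+2L_y)/L_y>2$. This only changes the constant hidden in $T=\mathcal{O}(\dx^{3/2}/(\mu\varepsilon^4))$, so unlike the partial case it is harmless.
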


\paragraph{Proof sketch of \cref{thm:convergencegoldsteinpoint}.}
The proof proceeds in four main steps. We highlight here the main challenges and our approach to resolve them. A detailed proof can be found in \cref{sec:smoothing-properties,sec:surrogate-smoothness,sec:convergence_goldstein}.

\noindent \textbf{Step~1: Smoothness of $y_\mu$} 
(\Cref{sec:smoothing-properties}). We extend classical smoothing results for scalar functions to the vector-valued setting $\R^{\dx}\to\R^{\dy}$, establishing that $y_\mu(x)$ is continuously differentiable with an explicit Jacobian formula. Crucially, since $y^*$ is only Lipschitz continuous, differentiability of $y_\mu$ cannot be obtained by standard arguments; instead it requires measure-theoretic reasoning via Rademacher's theorem and the dominated convergence theorem.

\noindent \textbf{Step~2: Smoothness of $F_\mu$ and biased estimator} 
(\Cref{sec:surrogate-smoothness}). Combining smoothness of $f$ with differentiability of $y_\mu$, we show that $F_\mu$ is 
$L_F$-smooth, enabling a descent lemma analysis. However, $g_t$ is a \emph{biased} estimator of $\grad F_\mu(x_t)$: since $y_\mu(x_t)$ is not available in closed form, the partial gradients of $f$ can not be evaluated at $(x_t, y_\mu(x_t))$, introducing a bias of order $\mu$. Importantly, this bias is fundamental and cannot be eliminated. Indeed, even employing an unbiased estimator $\hat{y}$ of $y_\mu(x_t)$ would yield a biased estimator of $\grad F_\mu(x_t)$, since $\E[\grad_y f(x_t, \hat{y})] \neq \grad_y f(x_t, y_\mu(x_t))$ in general. This bias does not vanish as $T\to\infty$, and would prevent convergence to $(\delta,\varepsilon)$-Goldstein stationary points if $\mu$ were set equal to $\delta$ as in the standard black-box approach -- since the bias term $\mc{O}(\mu) = \mc{O}(\delta)$ would then exceed the target tolerance $\varepsilon$ whenever $\delta \gg \varepsilon$. We circumvent this by decoupling $\mu$ from $\delta$, choosing $\mu$ sufficiently small relative to $\varepsilon$ so that the bias term $\mc{O}(\mu)$ remains below the target tolerance, while still ensuring $\mu \leq \delta$ to transfer convergence guarantees from $F_\mu$ to $F$.

\noindent \textbf{Step~3.A: Convergence to partial Goldstein stationarity} 
(\Cref{sec:convergence_goldstein}). Since $g_t$ is a biased estimator, the standard approach of establishing convergence for the surrogate function is unavailable. Instead, we show via a hyperplane separation argument that $\jac y_\mu(x) \in \goldsteinfull y^*(x)$ for $\delta \geq \mu$. Since $\goldsteinpartial F(x)$ fixes the partial gradients of $f$ at the center point $(x, y^*(x))$ and varies only the Clarke Jacobian of $y^*$, this implies $\E[g_t \mid x_t] \in \goldsteinpartial F(x_t)$ directly. Combining this containment with the descent lemma for $F_\mu$ then yields convergence to $(\delta, \varepsilon)$-partial Goldstein stationarity.

\noindent \textbf{Step~3.B: Convergence to standard Goldstein 
stationarity} (\Cref{sec:convergence_goldstein}). For standard Goldstein stationarity, an additional step is required: since $\goldsteinfull F(x)$ varies both the partial gradients of $f$ and the Clarke Jacobian of $y^*$ over the $\delta$-ball, $\E[g_t \mid x_t]$ does not lie directly in $\goldsteinfull F(x_t)$. Instead, we apply Carathéodory's theorem to show that $\grad F_\mu(x)$ lies within distance $C_f\mu$ of $\goldsteinfull F(x)$, and carry out the descent lemma analysis on $\grad F_\mu(x_t)$ directly. Choosing $\mu$ sufficiently small then ensures the additional approximation error remains below the target tolerance $\varepsilon$, yields the desired result.
{\section{Experiments}\label{sec:experiments}
We validate our method on two problem classes chosen to 
reflect the practical settings described in the introduction: toll optimization in routing games and defense-attack investment in security games.

\subsection{Routing games}\label{sec:experiments_routing}
We consider a toll optimization problem on a road network, where a central authority sets tolls on the edges of a directed graph $G=(V,E)$, representing the transportation network, with the goal of maximizing toll revenue. The network is used by $k$ user groups, each with a fixed origin-destination pair, a travel demand $d_i > 0$, and a sensitivity $s_i > 0$ to tolls. Given a toll vector $\tau$, each user group distributes its flow across the network to minimize its perceived travel cost, which combines the latency $\ell_e(f_e)$ on each edge $e$ — representing the travel time on edge $e$ as a function of the aggregated flow $f_e$ on that edge — with the perceived toll cost $\tau_e / s_i$. Since users act selfishly, they allocate themselves on the network following a Wardrop equilibrium~\cite{yang2004formulation}: each user group routes only on minimum perceived-cost paths. Under standard conditions, the aggregate equilibrium flow over the edges of the graph is unique for each given toll vector $\tau$, which we denote $f^*(\tau)\in\mb{R}^{|E|}$.
The central authority controls the toll vector and aims at maximizing the corresponding revenue:
\begin{equation}\label{eq:routing_obj}
  \max_{\tau\in\mb{R}^{|E|}} \sum_{e \in E} \tau_e \cdot f^*_e(\tau) - \lambda 
  \norm{\tau}^2,
\end{equation}
where $\lambda\norm{\tau}^2$ is a regularization term to discourage excessive tolling. The outer objective is continuously differentiable in $(\tau, f)$; however, the equilibrium response $f^*(\tau)$ is in general non-differentiable in $\tau$~\cite{lindberg2015trafficnondifferentiable}. Indeed, non-differentiability arises when small perturbations of $\tau$ change the set of minimum-cost paths used by some user group. This fits our framework directly as the leader's objective is smooth and known, while the equilibrium flow $f^*(\tau)$ is non-smooth and is accessible only through queries in practice. Indeed, the central authority sets tolls and observes the resulting equilibrium flow, but does not have access to an analytical model of how drivers respond.

\paragraph{Baseline and Setup.} 
We compare our method against \algzos, which applies the black-box smoothing idea discussed in \cref{sec:problemsetup} to the entire composed objective, treating the equilibrium flow as a black box. Concretely, \algzos estimates the gradient of the smooth surrogate $\tilde F_\mu$ via the two-point finite-difference estimator~\eqref{eq:twopointzoestimate}, and runs gradient ascent on this estimate. This is a natural and principled baseline: it has provable convergence guarantees to $(\delta,\varepsilon)$-Goldstein stationary points, and differs from \algpzos solely in that it discards the available partial gradient information on the leader's objective.

Both algorithms are run using 225 randomly generated network instances with 6-25 vertices, 12-119 edges, and 2-5 user groups, each with a fixed origin-destination pair, demands in $\U([3,\ldots,12])$, and toll sensitivities in $\U([0.1,2])$. Edge latency functions are affine, $\ell_e(f_e)=a_e f_e+b_e$, with $a_e\in\U([1,5])$ and $b_e\in\U([1,10])$. We initialize each instance with the initial toll vector $\tau_0 = 0$. %
We set $\lambda=1$, $\mu=0.5$, $\alpha=0.7/|E|$, and $T=150$ iterations.\footnote{{The step size was selected to give both algorithms stable and competitive performance and was not tuned specifically to \algpzos. If anything, $\alpha$ was chosen conservatively relative to what \algpzos alone could achieve, in order to stabilize \algzos. A sensitivity analysis over $\mu$ is provided in \Cref{fig:routing_mucomparison}, confirming that \algpzos outperforms \algzos across all tested values of $\mu$, with neither algorithm showing strong sensitivity to this parameter.}}
\begin{wrapfigure}[17]{r}{0.37\linewidth}
  \centering
  \vspace*{-6.5mm}
  \input{figures/tikz/routing_IQR_percentile}
  \caption{\small Algorithm performance on heterogeneous routing games. Median normalized objective trajectories with interquartile range (dark shading) and 10th--90th percentile range (light shading).}
  \label{fig:q1_iqr}
\end{wrapfigure}
For a given instance, both algorithms are run with the same starting point and noise directions $v_t \sim \U(\sphere^{\dx})$, so that performance differences are not attributable to sampling variation. Since objective values depend on problem scale, objectives are normalized to $[0,1]$ via $(F(x_t)-F_{\text{lowest}})/(F_{\text{highest}}-F_{\text{lowest}})$, where $F_{\text{highest}}$ ($F_{\text{lowest}}$) are the highest (lowest) objective values achieved by either algorithm on that run.\footnote{The experiments are run on a 13-inch MacBook Pro with 2.3 GHz Dual-Core Intel Core i5 processor and 8 GB 2133 MHz RAM. The code is implemented in \texttt{Matlab}.}

\paragraph{PZOS outperforms ZOS across all runs.} 
{\algpzos consistently outperforms \algzos across all 225 runs in convergence speed and median objective value (\cref{fig:q1_iqr}). It also exhibits lower objective variability across runs, which is consistent with the reduced estimator variance shown in \cref{lem:variancecomparison}.}
The performance gap is consistent across runs: the interquartile range of \algpzos lies entirely above that of \algzos, with the best runs of \algzos barely matching the worst runs of \algpzos. \Cref{fig:instance-by-instance} further confirms this on a per-instance basis: at every snapshot $t\in\{10,25,50\}$, every instance lies above the diagonal, and the gap is most pronounced for higher-dimensional instances (red dots). %
This suggests that the variance reduction predicted by \cref{lem:variancecomparison} translates directly into more reliable and consistent performance across problem instances. {Notably, \algpzos achieves this with only one additional oracle call per iteration, and still outperforms \algzos when \algzos is allowed the same or even more oracle calls (see \cref{fig:routing_Q124_oraclecalls}), confirming the gains stem from exploiting the known structure of the leader's objective.}

\begin{figure}[t]
    \centering
    \hspace*{-1cm}
    \input{figures/tikz/routing_dim_heatmap}
    \caption{Normalized objective value (higher is better) achieved by the baseline \algzos ($x$-axis) vs our method \algpzos ($y$-axis) on a per-instance basis, at iteration 10 (left panel), 25 (central panel), and 50 (right panel). Every point, each corresponding to an instance, lies above the diagonal, showing that \algpzos returns higher (better) objective values on all routing instances. Problems in higher dimensions (red dots) are those for which \algzos struggles the most, consistent with \cref{lem:variancecomparison}}
    \label{fig:instance-by-instance}
\end{figure}
\paragraph{Effect of batch size.} A standard approach to reduce the variance of \algzos is to average $Q>1$ independent two-point estimates per iteration when evaluating $\tilde{g}_t$ and $g_t$, at the cost of additional oracle evaluations of $y^*$ and $f$. {
We give \algzos a significant advantage by allowing it to use $Q\in\{1,2,4\}$ independent samples per iteration, while keeping \algpzos at $Q=1$. Even so, \algzos at $Q=4$ (using 8 oracle calls per iteration compared to \algpzos's 3) fails to close the gap, see \cref{fig:q1q124_iqr}. 
Moreover, \algpzos outperforms \algzos also on an oracle-call basis: \Cref{fig:routing_Q124_oraclecalls} plots the same comparison with oracle calls to $y^*$ on the $x$-axis instead of iterations, showing that \algpzos delivers superior performance per oracle call across all batch sizes -- a significant advantage when evaluating $y^*(x)$ is expensive.} %

\begin{figure}[htb]
  \centering
  \vspace*{-0.2cm}
  \hspace*{-1cm}
  \input{figures/tikz/routing_Q1Q124_IQR_percentile}
  \caption{Effect of batch size $Q$ on algorithm performance for heterogeneous routing games: \algpzos at $Q=1$ compared against \algzos at $Q \in \{1,2,4\}$, showing median normalized objective with IQR (dark shading) and 10th--90th percentile bands (light shading), on the same 225 runs as \cref{sec:experiments_routing}.}
  \label{fig:q1q124_iqr}
\end{figure}
\vspace*{-7pt}
\subsection{Security games}\label{sec:experiments_stackelberg}
\vspace*{-5pt}
We consider a Stackelberg security game in which a defender allocates investments across $n$ targets to minimize the damage from an anticipated attack, while an attacker responds by choosing where to direct their effort. The defender moves first, committing to a defense investment $x\in\R^n_{\ge 0}$ across the $n$ targets. The attacker then observes $x$ and responds by allocating attack efforts $y\in\R^n_{\ge 0}$, subject to a budget constraint $\sum_i y_i\le B_a$, to maximize their expected gain. The success probability of an attack on target $i$ is captured by the contest success function~\cite{hausken2011continuouscsf,tullock1980contestsuccessfunction}
\(
p_i(x_i,y_i)=\frac{y_i}{x_i+y_i+b_i},
\)
where $b_i>0$ is a baseline security level. Given the defender's investment $x$, the attacker solves \vspace{-3pt}
\begin{equation}\label{eq:attacker_obj}
  \max_{y \geq 0} \;
  \sum_{i=1}^n w_i p_i(x_i,y_i) - \tfrac{c_i^{\mathrm{A}}y_i^2}{2}
  \quad \text{s.t.} \quad \sum_{i=1}^n y_i \leq B_a,
\end{equation}
where $w_i > 0$ is the value of target $i$ to the attacker and $c_i^{\mathrm{A}} > 0$ is a quadratic effort cost. The attacker's objective is strictly concave, guaranteeing a unique response $y^*(x)$ for each $x$, which is continuous but possibly non-differentiable -- non-differentiability arises when the set of targets receiving positive attack effort changes under a binding budget constraint.\footnote{Uniqueness and continuity of $y^*(x)$ follow from strict concavity of the attacker's objective and Berge's Maximum Theorem~\cite[Thm.~9.17]{sundaram1996optimizationtheory}.} The defender anticipates $y^*(x)$ and minimizes expected damage and investment cost:
\begin{equation}\label{eq:defender_obj}
  \min_{x \geq 0} \; \sum\nolimits_{i=1}^n \left(v_i\, p_i(x_i,y^*_i(x)) + 
  \tfrac{c_i^{\mathrm{D}}x_i^2}{2}\right),
\end{equation}
where $v_i > 0$ is the value of target $i$ to the defender. This fits our framework directly: the defender's objective is smooth and known, while the attacker's response $y^*(x)$ is non-smooth and, in practice, accessible only through queries -- the defender commits to an investment and observes the resulting attack, but has no analytical model of the attacker's strategy.
\begin{wrapfigure}{r}{0.38\linewidth}
\vspace*{-6mm}
  \centering
  \input{figures/tikz/ssg_csf_IQR_percentile}
  \vspace*{-5mm}
  \caption{\small Algorithm performance on security games. Median normalized objective with interquartile range (dark shading) and 10th--90th percentile (light shading).}
  \label{fig:ssg_csf_iqr}
  \vspace*{-6mm}
\end{wrapfigure}
\paragraph{Baseline and Setup.} We compare \algpzos against \algzos, the same baseline as in \cref{sec:experiments_routing}, and normalize trajectories as described therein. We use 427 randomly generated instances with $n\in[2,300]$ targets, $v_i,w_i\sim\U([1,5])$, $b_i\sim\U([0.1,0.5])$, $c_i^{\mathrm{D}},c_i^{\mathrm{A}}\sim\U([0.1,0.3])$. The attacker's budget is set to ensure the budget constraint is binding throughout.\footnote{Specifically, $B_a=\tfrac{1}{2}\sum_i y_i^*(x_{\mathrm{sym}})$, where $y_i^*(x_{\mathrm{sym}})$ is the attacker's optimal unconstrained effort assuming the defender mirrors the attacker's effort, $x_{\mathrm{sym}} = y_{\mathrm{sym}}$, scaled by $0.5$ to ensure the constraint binds.} We set $\mu=0.1$, $T=500$, $Q=1$, and diminishing step size $\alpha_t=0.05/\sqrt{t}$ ($n\le100$) or $0.03/\sqrt{t}$ ($n>100$).\footnote{{The smaller step size for $n>100$ was chosen to ensure stability of~\algzos at higher dimensions, where its larger estimator variance would otherwise lead to instability with the step size used for $n\le100$; \algpzos remains stable under both step sizes.}} The initial point $x_0$ is set to the defender's optimal investment assuming the attacker exerts effort $y_i=b_i$ on each target, a natural starting point that requires no knowledge of the attacker's budget or cost structure. {The algorithms we implement enforce $x \geq 0$ via projection after each gradient step. In practice, however, the projection is not activated, as the gradient points into the positive orthant.}
\paragraph{PZOS outperforms ZOS across all runs.}
The results mirror those of the routing games: \algpzos consistently outperforms \algzos across all 427 runs in convergence speed, median objective value and variance (\cref{fig:ssg_csf_iqr}). Notably, this holds across a wide range of problem dimensions $n\in[2,300]$, and confirms a systematic benefit of exploiting the known leader's objective.
\paragraph{Performance gap increases with dimension.}
\Cref{fig:ssg_csf_dimension} examines how the two algorithms behave as the problem dimension grows. While \algpzos maintains a consistently low and stable objective value across all dimensions, \algzos degrades substantially as $n$ increases, with the gap between the two methods widening monotonically at every snapshot $t\in\{50,150,250\}$. This suggests that the advantage of confining zeroth-order estimation to the Jacobian of $y^*$ alone -- rather than the entire composed objective -- becomes increasingly significant as the problem dimension grows, and is consistent with the dimension-dependent variance bound of \cref{lem:variancecomparison}, under which the second-moment gap between $g_t$ and $\tilde{g}_t$ grows linearly in the problem dimension.
\begin{figure}[H]
\vspace*{-0.2cm}
    \centering
    \hspace*{-1cm}
    \input{figures/tikz/ssg_csf_dimension}
    \caption{Normalized objective value (lower is better) achieved by the baseline \algzos (black) and our method \algpzos (blue) as a function of problem dimension (number of targets $n\in[100,300]$) at iterations 50 (left panel), 150 (center), 250 (right), over 287 instances. Shading shows min/max range. As dimension grows, \algzos degrades substantially while \algpzos remains stable.}%
    \label{fig:ssg_csf_dimension}
\end{figure}

\paragraph{Limitations.}Our algorithm is developed for the setting in which $y^*$ is constrained, but $x$ is unconstrained. In Appendix~\ref{sec:appendixalgorithms}, we test both algorithms in the constrained setting by requiring $\tau \geq 0$ in the routing objective~\eqref{eq:routing_obj}, enforcing feasibility via projection onto $\mb{R}^{|E|}_{\geq 0}$ after each update step. Both \algzos and \algpzos remain stable under this modification, with \algpzos still achieving visibly better performance than \algzos (\Cref{fig:routing_const_iqr}). Extending the theory to the constrained setting $x\in\X$ would require $y^*(x)$ to be well defined on $\X_\mu = \X+\ball(0,\mu)$ rather than only on $\X$. One could then extend the partial Goldstein stationarity notion analogously to the constrained Goldstein stationarity notion of \citet{cui2023mpec,liu2024goldsteinconstrained}.
\FloatBarrier

\bibliography{bibref}

\begin{thebibliography}{32}
\providecommand{\natexlab}[1]{#1}
\providecommand{\url}[1]{\texttt{#1}}
\expandafter\ifx\csname urlstyle\endcsname\relax
  \providecommand{\doi}[1]{doi: #1}\else
  \providecommand{\doi}{doi: \begingroup \urlstyle{rm}\Url}\fi

\bibitem[Amos and Kolter(2017)]{amos2017optnet}
Brandon Amos and J~Zico Kolter.
\newblock Optnet: Differentiable optimization as a layer in neural networks.
\newblock In \emph{International conference on machine learning}, pages
  136--145. PMLR, 2017.

\bibitem[Aubin and Cellina(1984)]{Aubin1984usccompact}
Jean-Pierre Aubin and Arrigo Cellina.
\newblock \emph{Differential inclusions. Set-valued maps and viability theory}.
\newblock Springer, 1984.

\bibitem[Benson et~al.(2006)Benson, Sen, Shanno, and
  Vanderbei]{benson2006interior}
Hande~Y Benson, Arun Sen, David~F Shanno, and Robert~J Vanderbei.
\newblock Interior-point algorithms, penalty methods and equilibrium problems.
\newblock \emph{Computational Optimization and Applications}, 34\penalty0
  (2):\penalty0 155--182, 2006.

\bibitem[Chen et~al.(2023)Chen, Xu, and Luo]{chen2023goldsteinimproved}
Lesi Chen, Jing Xu, and Luo Luo.
\newblock Faster gradient-free algorithms for nonsmooth nonconvex stochastic
  optimization.
\newblock In \emph{International Conference on Machine Learning}, pages
  5219--5233. PMLR, 2023.

\bibitem[Clarke(1990)]{clarke1990generalizedgradients}
Frank~H Clarke.
\newblock \emph{Optimization and nonsmooth analysis}.
\newblock SIAM, 1990.

\bibitem[Cui and Shanbhag(2023)]{cui2023hierarchicalgames}
Shisheng Cui and Uday~V Shanbhag.
\newblock On the computation of equilibria in monotone and potential stochastic
  hierarchical games.
\newblock \emph{Mathematical Programming}, 198\penalty0 (2):\penalty0
  1227--1285, 2023.

\bibitem[Cui et~al.(2023)Cui, Shanbhag, and Yousefian]{cui2023mpec}
Shisheng Cui, Uday~V Shanbhag, and Farzad Yousefian.
\newblock Complexity guarantees for an implicit smoothing-enabled method for
  stochastic {MPECs}.
\newblock \emph{Mathematical Programming}, 198\penalty0 (2):\penalty0
  1153--1225, 2023.

\bibitem[DeMiguel et~al.(2005)DeMiguel, Friedlander, Nogales, and
  Scholtes]{demiguel2005mpecrelaxation}
Victor DeMiguel, Michael~P Friedlander, Francisco~J Nogales, and Stefan
  Scholtes.
\newblock A two-sided relaxation scheme for mathematical programs with
  equilibrium constraints.
\newblock \emph{SIAM Journal on Optimization}, 16\penalty0 (2):\penalty0
  587--609, 2005.

\bibitem[Elkind et~al.(2024)Elkind, Ghosh, and Goldberg]{elkind2024csfunique}
Edith Elkind, Abheek Ghosh, and Paul~W Goldberg.
\newblock Continuous-time best-response and related dynamics in tullock
  contests with convex costs.
\newblock \emph{arXiv preprint arXiv:2402.08541}, 2024.

\bibitem[Flaxman et~al.(2005)Flaxman, Kalai, and McMahan]{flaxman2005}
Abraham~D. Flaxman, Adam~Tauman Kalai, and H.~Brendan McMahan.
\newblock Online convex optimization in the bandit setting: gradient descent
  without a gradient.
\newblock In \emph{Proceedings of the Sixteenth Annual ACM-SIAM Symposium on
  Discrete Algorithms}, SODA '05, page 385–394, USA, 2005. Society for
  Industrial and Applied Mathematics.
\newblock ISBN 0898715857.

\bibitem[Fletcher et~al.(2006)Fletcher, Leyffer, Ralph, and
  Scholtes]{fletcher2006sqp}
Roger Fletcher, Sven Leyffer, Danny Ralph, and Stefan Scholtes.
\newblock Local convergence of {SQP} methods for mathematical programs with
  equilibrium constraints.
\newblock \emph{SIAM Journal on Optimization}, 17\penalty0 (1):\penalty0
  259--286, 2006.

\bibitem[Grontas et~al.(2024)Grontas, Belgioioso, Cenedese, Fochesato, Lygeros,
  and D{\"o}rfler]{grontas2024bighype}
Panagiotis~D Grontas, Giuseppe Belgioioso, Carlo Cenedese, Marta Fochesato,
  John Lygeros, and Florian D{\"o}rfler.
\newblock Big hype: Best intervention in games via distributed hypergradient
  descent.
\newblock \emph{IEEE Transactions on Automatic Control}, 69\penalty0
  (12):\penalty0 8338--8353, 2024.

\bibitem[Hausken and Bier(2011)]{hausken2011continuouscsf}
Kjell Hausken and Vicki~M Bier.
\newblock Defending against multiple different attackers.
\newblock \emph{European Journal of Operational Research}, 211\penalty0
  (2):\penalty0 370--384, 2011.

\bibitem[Hoheisel et~al.(2013)Hoheisel, Kanzow, and
  Schwartz]{hoheisel2013mpecrelaxation}
Tim Hoheisel, Christian Kanzow, and Alexandra Schwartz.
\newblock Theoretical and numerical comparison of relaxation methods for
  mathematical programs with complementarity constraints.
\newblock \emph{Mathematical Programming}, 137\penalty0 (1):\penalty0 257--288,
  2013.

\bibitem[Iliaev et~al.(2023)Iliaev, Oren, and Segev]{iliaev2023csf}
David Iliaev, Sigal Oren, and Ella Segev.
\newblock A tullock-contest-based approach for cyber security investments.
\newblock \emph{Annals of Operations Research}, 320\penalty0 (1):\penalty0
  61--84, 2023.

\bibitem[Kornowski and Shamir(2022)]{kornowski2022nearclarkestationary}
Guy Kornowski and Ohad Shamir.
\newblock Oracle complexity in nonsmooth nonconvex optimization.
\newblock \emph{Journal of Machine Learning Research}, 23\penalty0
  (314):\penalty0 1--44, 2022.

\bibitem[Kornowski and Shamir(2024)]{kornowski2024goldsteinoptimal}
Guy Kornowski and Ohad Shamir.
\newblock An algorithm with optimal dimension-dependence for zero-order
  nonsmooth nonconvex stochastic optimization.
\newblock \emph{Journal of Machine Learning Research}, 25\penalty0
  (122):\penalty0 1--14, 2024.

\bibitem[Lin et~al.(2022)Lin, Zheng, and Jordan]{lin2022goldstein}
Tianyi Lin, Zeyu Zheng, and Michael Jordan.
\newblock Gradient-free methods for deterministic and stochastic nonsmooth
  nonconvex optimization.
\newblock \emph{Advances in Neural Information Processing Systems},
  35:\penalty0 26160--26175, 2022.

\bibitem[Lindberg and Engelson(2015)]{lindberg2015trafficnondifferentiable}
P.O. Lindberg and Leonid Engelson.
\newblock Tolled multi-class traffic equilibria and toll sensitivities.
\newblock \emph{EURO Journal on Transportation and Logistics}, 4\penalty0
  (2):\penalty0 197--222, 2015.
\newblock ISSN 2192-4376.

\bibitem[Liu et~al.(2024)Liu, Chen, Luo, and Low]{liu2024goldsteinconstrained}
Zhuanghua Liu, Cheng Chen, Luo Luo, and Bryan Kian~Hsiang Low.
\newblock Zeroth-order methods for constrained nonconvex nonsmooth stochastic
  optimization.
\newblock In Ruslan Salakhutdinov, Zico Kolter, Katherine Heller, Adrian
  Weller, Nuria Oliver, Jonathan Scarlett, and Felix Berkenkamp, editors,
  \emph{Proceedings of the 41st International Conference on Machine Learning},
  volume 235 of \emph{Proceedings of Machine Learning Research}, pages
  30842--30872. PMLR, 2024.

\bibitem[Nesterov and Spokoiny(2017)]{nesterov2017spokoiny}
Yurii Nesterov and Vladimir Spokoiny.
\newblock Random gradient-free minimization of convex functions.
\newblock \emph{Foundations of Computational Mathematics}, 17\penalty0
  (2):\penalty0 527--566, 2017.

\bibitem[Qi and Wei(2000)]{qi2000sqp}
Liqun Qi and Zengxin Wei.
\newblock On the constant positive linear dependence condition and its
  application to {SQP} methods.
\newblock \emph{SIAM Journal on Optimization}, 10\penalty0 (4):\penalty0
  963--981, 2000.

\bibitem[Raghunathan and Biegler(2005)]{raghunathan2005mpccinterior}
Arvind~U Raghunathan and Lorenz~T Biegler.
\newblock An interior point method for mathematical programs with
  complementarity constraints (mpccs).
\newblock \emph{SIAM Journal on Optimization}, 15\penalty0 (3):\penalty0
  720--750, 2005.

\bibitem[Shamir(2017)]{shamir2017boundestimator}
Ohad Shamir.
\newblock An optimal algorithm for bandit and zero-order convex optimization
  with two-point feedback.
\newblock \emph{Journal of Machine Learning Research}, 18\penalty0
  (52):\penalty0 1--11, 2017.

\bibitem[Sow et~al.(2022)Sow, Ji, and Liang]{sow2022pzobo}
Daouda Sow, Kaiyi Ji, and Yingbin Liang.
\newblock On the convergence theory for {Hessian-free} bilevel algorithms.
\newblock \emph{Advances in Neural Information Processing Systems},
  35:\penalty0 4136--4149, 2022.

\bibitem[Sundaram(1996)]{sundaram1996optimizationtheory}
Rangarajan~K Sundaram.
\newblock \emph{A first course in optimization theory}.
\newblock Cambridge University Press, 1996.

\bibitem[Tao et~al.(2025)Tao, Cui, Li, and Sun]{tao2025zobilevel}
Haochen Tao, Shisheng Cui, Zhuo Li, and Jian Sun.
\newblock A zeroth-order stochastic implicit method for bilevel-structured
  actor-critic schemes.
\newblock \emph{Science China Information Sciences}, 68\penalty0 (5):\penalty0
  150204, 2025.

\bibitem[Tullock(1980)]{tullock1980contestsuccessfunction}
Gordon Tullock.
\newblock Efficient rent seeking.
\newblock In James~M. Buchanan, Robert~D. Tollison, and Gordon Tullock,
  editors, \emph{Toward a Theory of the Rent-Seeking Society}, pages 97--112.
  Texas A\&M University Press, College Station, TX, 1980.

\bibitem[Vershynin(2018)]{vershynin2018highdimensionalprobability}
Roman Vershynin.
\newblock \emph{High-dimensional probability: An introduction with applications
  in data science}, volume~47.
\newblock Cambridge University Press, 2nd edition, 2018.

\bibitem[Yang and Huang(2004)]{yang2004formulation}
Hai Yang and Hai-Jun Huang.
\newblock The multi-class, multi-criteria traffic network equilibrium and
  systems optimum problem.
\newblock \emph{Transportation Research Part B: Methodological}, 38\penalty0
  (1):\penalty0 1--15, 2004.

\bibitem[Yousefian et~al.(2012)Yousefian, Nedi\'{c}, and
  Shanbhag]{yousefian2012randomizedsmoothingproperties}
Farzad Yousefian, Angelia Nedi\'{c}, and Uday~V Shanbhag.
\newblock On stochastic gradient and subgradient methods with adaptive
  steplength sequences.
\newblock \emph{Automatica}, 48\penalty0 (1):\penalty0 56--67, 2012.

\bibitem[Zhang et~al.(2020)Zhang, Lin, Jegelka, Sra, and
  Jadbabaie]{zhang2020clarkestationary}
Jingzhao Zhang, Hongzhou Lin, Stefanie Jegelka, Suvrit Sra, and Ali Jadbabaie.
\newblock Complexity of finding stationary points of nonconvex nonsmooth
  functions.
\newblock In \emph{International Conference on Machine Learning}, pages
  11173--11182. PMLR, 2020.

\end{thebibliography}

\clearpage
\appendix
\crefname{section}{Appendix}{Appendices}
\Crefname{section}{Appendix}{Appendices}
 
\section*{Technical appendices and supplementary material}
\section{Supplementary algorithms and graphs}\label{sec:appendixalgorithms}
\begin{algorithm}[hbt]
\caption{Zero-Order Smoothing (ZOS)}
\begin{algorithmic}\label{alg:zero_order}
\STATE \textbf{Input:} Initial $x_0 \in \R^{\dx}$, step size $\alpha$, smoothing parameter $\mu>0$, iterations $T$
\FOR{$t = 0, 1, \ldots, T-1$}
\STATE Sample $v_t \sim \U({\sphere^{\dx}})$
    \hfill\textbackslash\textbackslash  {\small~Random direction uniformly from unit sphere}
\STATE Evaluate $y^*(x_t + \mu v_{t})$, and $y^*(x_t - \mu v_{t})$
\STATE $\tilde g_{t} = \frac{\dx}{2\mu}[f(x_t + \mu v_{t},y^*(x_t + \mu v_{t})) - f(x_t - \mu v_{t},y^*(x_t - \mu v_{t}))]\,v_{t}$
    \hfill\textbackslash\textbackslash {\small~Gradient estimate}
\STATE $x_{t+1} = x_t - \alpha \tilde g_t$
    \hfill\textbackslash\textbackslash {\small~Update step (minimization); for maximization, use $+$}
\ENDFOR
\STATE \textbf{Output:} $x^R$ where $R\in\{0,1,\dots,T-1\}$ is uniformly sampled
\end{algorithmic}
\end{algorithm}

\begin{algorithm}[!hbt]
\caption{Zero-Order with batch size Q (ZOS-B)}
\begin{algorithmic}\label{alg:zero_order_batch}
\STATE \textbf{Input:} Initial $x_0 \in \R^{\dx}$, step size $\alpha$, smoothing parameter $\mu>0$, iterations $T$, batch size $Q$
\FOR{$t = 0, 1, \ldots, T-1$}
\FOR{$q = 1, \ldots, Q$}
\STATE Sample $v_{t,q} \sim \U(\sphere^{\dx})$
    \hfill\textbackslash\textbackslash  {\small~Random direction uniformly from unit sphere}
\STATE Evaluate $y^*(x_t + \mu v_{t,q})$, and $y^*(x_t - \mu v_{t,q})$
\STATE $g_{t,q} = \frac{\dx}{2\mu}[f(x_t + \mu v_{t,q},y^*(x_t + \mu v_{t,q})) - f(x_t - \mu v_{t,q},y^*(x_t - \mu v_{t,q}))]\,v_{t,q}$
    \hfill\textbackslash\textbackslash {\small~Gradient est.}
\ENDFOR
\STATE Compute gradient average: $g_t = \frac{1}{Q} \sum_{q=1}^Q g_{t,q}$
    \hfill\textbackslash\textbackslash {\small~Average of gradient estimates}
\STATE Update: $x_{t+1} = x_t - \alpha g_t$  \hfill\textbackslash\textbackslash {\small~Update step (minimization); for maximization, use $+$}
\ENDFOR
\STATE \textbf{Output:} $x^R$ where $R\in\{0,1,\dots,T-1\}$ is uniformly sampled
\end{algorithmic}
\end{algorithm}
\begin{algorithm}[!hbt]
\caption{Partial Zero-Order with batch size Q (PZOS-B)}
\begin{algorithmic}\label{alg:partial_zero_order_batch}
\STATE \textbf{Input:} Initial $x_0 \in \R^{\dx}$, step size $\alpha$, smoothing parameter $\mu>0$, iterations $T$, batch size $Q$
\FOR{$t = 0, 1, \ldots, T-1$}
\STATE Evaluate $y^*(x_t)$
\FOR{$q = 1, \ldots, Q$}
\STATE Sample $v_{t,q} \sim \U(\sphere^{\dx})$ \hfill\textbackslash\textbackslash  {\small~Random direction uniformly from unit sphere}
\STATE Evaluate $y^*(x_t + \mu v_{t,q})$, and $y^*(x_t - \mu v_{t,q})$
\STATE Compute Jacobian estimate: $H_{t,q} = \frac{\dx}{2\mu}((y^*(x_t + \mu v_{t,q}) - y^*(x_t - \mu v_{t,q})) \,v_{t,q}^\top)$
\ENDFOR
\STATE Compute Jacobian average: $\bar H_t = \frac{1}{Q}\sum_{q=1}^Q H_{t,q}$
\STATE Compute gradient estimate: $g_t = \grad_x f(x_t, y^*(x_t)) + \bar H_t^\top \grad_y f(x_t, y^*(x_t))$ \hfill\textbackslash\textbackslash{\small~Gradient estimate}
\STATE Update: $x_{t+1} = x_t - \alpha g_t$ \hfill\textbackslash\textbackslash {\small~Update step (minimization); for maximization, use $+$}
\ENDFOR
\STATE \textbf{Output:} $x^R$ where $R\in\{0,1,\dots,T-1\}$ is uniformly sampled
\end{algorithmic}
\end{algorithm}

\begin{figure}[t!]
    \centering
    \hspace*{-1cm}
    \input{figures/tikz/routing_Q1Q124_oraclecalls}
    \caption{Normalized objective value (higher is better) for the heterogeneous routing game of \cref{sec:experiments_routing} by batch size $Q=1,2,4$, with number of oracle calls of $y^*$ on the $x$-axis. Median with interquartile range (dark shading) and 10-90th percentile bands (light shading). \algpzos outperforms \algzos not only by iteration, but also by number of oracle calls.}
    \label{fig:routing_Q124_oraclecalls}
\end{figure}

\begin{figure}
    \centering
    \hspace*{-1cm}
    \input{figures/tikz/routing_mu_IQR}
    \caption{Normalized objective value (higher is better) for smoothing parameter $\mu=\{0.1,0.5,0.9,2\}$ on the same 225 runs as \cref{sec:experiments_routing}. Median with interquartile range. \algpzos performs significantly better than \algzos for all considered values of $\mu$.}
    \label{fig:routing_mucomparison}
\end{figure}

\begin{figure}
    \centering
    \input{figures/tikz/routing_const_IQR_percentile}
    \caption{Algorithm performance over heterogeneous routing games from Section~\ref{sec:experiments_routing} with non-negativity constraint $\tau \ge 0$ enforced via projection. Median normalized objective trajectories with interquartile range (dark shading) and 10th–90th percentile range (light shading).}
    \label{fig:routing_const_iqr}
\end{figure}

\begin{figure}
  \centering
  \definecolor{mycolor1}{rgb}{0.00000,0.35000,0.75000}%

\begin{tikzpicture}

\begin{axis}[%
width=0.6\linewidth,
scale only axis,
xmin=0,
xmax=300,
xlabel style={font=\color{white!15!black}},
xlabel={$\text{Problem dimension d}_\text{x}$},
ymin=-500,
ymax=4500,
ylabel style={font=\color{white!15!black}},
ylabel={Mean squared gradient norm},
axis background/.style={fill=white},
title style={font=\bfseries},
axis x line*=bottom,
axis y line*=right,
xmajorgrids,
ymajorgrids,
xtick = {0,50,100,150,200,250,300},
legend style={at={(0.03,0.97)}, anchor=north west, legend cell align=left, align=left, draw=white!15!black}
]
\addplot [color=black, line width=2.0pt]
 plot [error bars/.cd, y dir=both, y explicit, error bar style={line width=1.0pt}, error mark options={line width=1.0pt, mark size=3.0pt, rotate=90}]
 table[row sep=crcr, y error plus index=2, y error minus index=3]{%
2	0.0720382310531737	0.0717059749795495	0.0717059749795495\\
5	0.277681155473236	0.167500946894982	0.167500946894982\\
10	1.67599750772179	1.37793125221054	1.37793125221054\\
15	3.81627222079275	1.83085066148375	1.83085066148375\\
20	5.84422543541167	2.26915553026669	2.26915553026669\\
25	11.8753042231406	6.24458872746895	6.24458872746895\\
30	14.7694389269309	6.30620393132587	6.30620393132587\\
35	20.7627347846184	8.88701256738455	8.88701256738455\\
40	26.307237348122	6.53015701043902	6.53015701043902\\
45	37.8328234740546	14.8925940775723	14.8925940775723\\
50	46.720234324135	15.9354180658447	15.9354180658447\\
55	55.9141606160325	9.79751230426537	9.79751230426537\\
60	70.9587063346862	15.5072080347386	15.5072080347386\\
65	83.0901214785249	21.5986198695955	21.5986198695955\\
70	96.468237129471	13.817566156744	13.817566156744\\
75	120.848556213558	26.3341831940064	26.3341831940064\\
80	145.599655309363	38.9157950452658	38.9157950452658\\
85	160.972599632997	39.6619902391498	39.6619902391498\\
90	178.118437538444	43.2462978225858	43.2462978225858\\
95	196.328267104212	48.4659983697767	48.4659983697767\\
100	242.567180452068	67.0501138886031	67.0501138886031\\
105	335.781434842264	64.9626583014864	64.9626583014864\\
110	375.222232006804	64.1904899783738	64.1904899783738\\
115	421.710653109661	52.9347432376994	52.9347432376994\\
120	484.948032654067	57.1776107742218	57.1776107742218\\
125	452.497267853968	68.3054893131537	68.3054893131537\\
130	550.629092763734	122.03522711466	122.03522711466\\
135	572.010204371874	66.4660164986395	66.4660164986395\\
140	673.702026230518	193.589961106558	193.589961106558\\
145	694.443758936951	164.224359168612	164.224359168612\\
150	701.601906542917	96.8789928471359	96.8789928471359\\
155	795.567801193934	52.4535619305897	52.4535619305897\\
160	832.991901851835	159.500147485095	159.500147485095\\
165	905.719276515106	147.411101995038	147.411101995038\\
170	947.068099957176	273.756255971962	273.756255971962\\
175	1100.13016575422	492.974210406119	492.974210406119\\
180	1153.64667677444	146.540408293411	146.540408293411\\
185	1200.53891402118	145.351508868829	145.351508868829\\
190	1312.65940166872	254.684567730405	254.684567730405\\
195	1449.58046091096	456.310087396173	456.310087396173\\
200	1394.94281739629	209.45874306644	209.45874306644\\
205	1579.84647755348	178.462149154963	178.462149154963\\
210	1771.40768711687	629.055413507477	629.055413507477\\
215	1669.32151752607	350.87529452338	350.87529452338\\
220	1732.03669292953	361.487177524635	361.487177524635\\
225	1838.72244780918	457.839945948974	457.839945948974\\
230	2016.1959232951	140.851630140391	140.851630140391\\
235	2121.2722204134	425.334979444206	425.334979444206\\
240	2277.6758261773	890.525435826054	890.525435826054\\
245	2349.41522580614	473.474434423091	473.474434423091\\
250	2654.36573694404	486.95342300764	486.95342300764\\
255	2769.03366317015	795.292137659746	795.292137659746\\
260	2694.75596688909	637.010103894159	637.010103894159\\
265	2898.7554901487	275.037332565357	275.037332565357\\
270	2815.20911062928	397.636804694583	397.636804694583\\
275	3387.58040789056	929.975741257687	929.975741257687\\
280	3223.45725918719	742.234142931112	742.234142931112\\
285	3411.91517558441	707.174489393674	707.174489393674\\
290	3499.01405991708	607.941668318282	607.941668318282\\
295	3597.33996614949	808.812570184341	808.812570184341\\
300	3734.08068616976	543.776774385006	543.776774385006\\
};
\addlegendentry{$||\tilde g_t||^2$ (baseline)}

\addplot [color=mycolor1, line width=2.0pt]
 plot [error bars/.cd, y dir=both, y explicit, error bar style={line width=1.0pt}, error mark options={line width=1.0pt, mark size=3.0pt, rotate=90}]
 table[row sep=crcr, y error plus index=2, y error minus index=3]{%
2	0.037149373302615	0.0355545303998001	0.0355545303998001\\
5	0.192849048858476	0.289889394458312	0.289889394458312\\
10	0.960668636274019	1.18497482718324	1.18497482718324\\
15	1.47821633895556	1.25364542917667	1.25364542917667\\
20	3.8887408410889	3.70032021606844	3.70032021606844\\
25	5.90526489461628	4.87638996516373	4.87638996516373\\
30	6.64714088325725	4.05610429765497	4.05610429765497\\
35	9.29117319620009	9.24645232734573	9.24645232734573\\
40	12.1874155529811	11.8232920498023	11.8232920498023\\
45	16.5834001122649	12.8369847062298	12.8369847062298\\
50	24.5967426682977	22.3222373674261	22.3222373674261\\
55	26.4644685003181	12.5958659703986	12.5958659703986\\
60	25.5865944254714	9.39325130445303	9.39325130445303\\
65	35.5369775683257	23.5306010041256	23.5306010041256\\
70	28.8645491147193	10.352567188402	10.352567188402\\
75	47.3915301375284	19.3548243063839	19.3548243063839\\
80	56.7046298428362	25.9673456881661	25.9673456881661\\
85	55.1218350297763	22.1214610603513	22.1214610603513\\
90	69.2989311309253	28.4130343731991	28.4130343731991\\
95	78.1010014752309	45.9665344937364	45.9665344937364\\
100	90.501762174164	21.1837867562733	21.1837867562733\\
105	85.2679675210889	42.3264540637423	42.3264540637423\\
110	120.630199941407	57.7269978413479	57.7269978413479\\
115	110.410845859054	58.6140644485731	58.6140644485731\\
120	120.358777011224	29.906469795273	29.906469795273\\
125	138.325797758359	46.9417804775426	46.9417804775426\\
130	152.077914654021	80.6548599717426	80.6548599717426\\
135	155.314690823711	68.0861682534598	68.0861682534598\\
140	173.704404779238	74.5303782339923	74.5303782339923\\
145	182.561791986124	103.397167671714	103.397167671714\\
150	195.576912360159	29.8464500278802	29.8464500278802\\
155	226.051065829533	70.2171198956616	70.2171198956616\\
160	236.096473463315	95.921071448788	95.921071448788\\
165	234.922891865202	130.081491182816	130.081491182816\\
170	276.614971199217	142.385238425239	142.385238425239\\
175	255.457890658331	116.907867579776	116.907867579776\\
180	289.918981543731	93.8743650256624	93.8743650256624\\
185	341.226778557949	124.814336242801	124.814336242801\\
190	333.561068886118	114.10612542846	114.10612542846\\
195	304.052598367158	114.980153722098	114.980153722098\\
200	320.264864636774	155.035541250983	155.035541250983\\
205	335.427391700634	151.202759371618	151.202759371618\\
210	428.494126107552	177.831584670771	177.831584670771\\
215	482.859921857595	120.380014107361	120.380014107361\\
220	400.241022256309	179.707287996483	179.707287996483\\
225	453.153068879463	108.692819791252	108.692819791252\\
230	444.462508638929	153.312494066919	153.312494066919\\
235	460.796351500776	129.251895571254	129.251895571254\\
240	529.54156131653	135.119648450192	135.119648450192\\
245	552.950562631243	157.736028225916	157.736028225916\\
250	546.404002722256	95.4660635407283	95.4660635407283\\
255	514.874987842098	82.4642587528272	82.4642587528272\\
260	572.607852616931	112.712232503566	112.712232503566\\
265	588.578259880998	148.760503324496	148.760503324496\\
270	652.396793158414	241.972154672728	241.972154672728\\
275	682.175693172543	184.037290002897	184.037290002897\\
280	689.765437954505	255.205578929031	255.205578929031\\
285	658.541211019633	181.590860787125	181.590860787125\\
290	697.919877296392	228.280786469083	228.280786469083\\
295	744.367291406111	265.479817090966	265.479817090966\\
300	795.603027530778	304.810727611887	304.810727611887\\
};
\addlegendentry{$||g_t||^2$ (ours)}

\end{axis}
\end{tikzpicture}%
%
  \caption{Plot of \Cref{fig:variance_comparison} with error bars. Mean $\pm$ 2 standard deviations of $\|g_t\|^2$ and $\|\tilde g_t\|^2$ on Stackelberg instances (Section~\ref{sec:experiments_stackelberg}) with 3500 samples per problem dimension.} %
\end{figure}

\FloatBarrier
\section{Goldstein Stationarity: Detailed Framework and Examples}\label{sec:goldstein}

This section provides technical details underlying the notion of Goldstein and partial Goldstein stationary point in contribution~\ref{contributiongoldstein}. Specifically, we first recall the Clarke generalized Jacobian and chain rule, then introduce the corresponding Goldstein and partial Goldstein subdifferential with their corresponding stationarity notion. We conclude with two illustrative examples showing that neither notion is in general stronger than the other.

\subsection{Clarke Generalized Jacobian}
We begin by recalling the Clarke generalized Jacobian for vector-valued functions, which extends the Clarke generalized gradient to mappings $G: \R^n \to \R^m$.

\begin{definition}[{Clarke generalized Jacobian, \cite[Def.~2.6.1]{clarke1990generalizedgradients}}]\label{def:clarkejac}
Let $G:\mb{R}^n \to \mb{R}^m$ be a vector-valued function with components $G=(g_1(x), g_2(x), \ldots, g_m(x))$, where each $g_i:\R^n \to \R$ is locally Lipschitz. The Clarke generalized Jacobian $\partial G(x)$ of $G$ at a point $x \in \R^n$ is defined as the convex hull of all limits of Jacobians along sequences of differentiable points converging to $x$:
\begin{equation}
\partial G(x) = \co\left\{\lim \jac G(x_k) : x_k \to x, \; G \text{ is differentiable at each } x_k\right\},
\label{app:eq:clarkejac}
\end{equation}
where $\jac G(x) \in \R^{m \times n}$ denotes the Jacobian matrix of partial derivatives of $G$ at a point $x$ where those partial derivatives exist.
\end{definition}

The Clarke generalized Jacobian is a nonempty, convex, compact subset of $\R^{m \times n}$, and the set-valued mapping $x \mapsto \partial G(x)$ is upper semicontinuous \cite[Prop.~2.6.2]{clarke1990generalizedgradients}.

\begin{theorem}[{Clarke's Generalized Chain Rule, \cite[Thm.~2.6.6]{clarke1990generalizedgradients}}]\label{thm:clarkechainrule}
Let $G: \R^n \to \R^m$ be locally Lipschitz near $x$ and let $f: \R^m \to \R$ be locally Lipschitz near $G(x)$. Define the function $h: \R^n \to \R$ by $h(x) = f(G(x))$. Then:
\begin{equation}\label{eq:chainrule_general}
\partial h(x) \subseteq \co\left\{ v M : v \in \partial f(G(x)), \; M \in \partial G(x) \right\},
\end{equation}
where $\partial f(G(x)) \subseteq \R^{1 \times m}$ denotes the Clarke generalized gradient of $f$ at $G(x)$, treated as a set of row vectors. If, in addition, $f$ is strictly differentiable at $G(x)$, $\partial f(G(x)) = \{\grad f(G(x))^\top\}$ and equality holds:
\begin{equation}\label{eq:chainrule_strict}
\partial h(x) = \left\{ v M : v \in \partial f(G(x)), \; M \in \partial G(x) \right\}.
\end{equation}
\end{theorem}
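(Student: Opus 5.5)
The plan is to prove the inclusion~\eqref{eq:chainrule_general} by comparing support functions. Then, for the equality case~\eqref{eq:chainrule_strict}, I would reduce to the scalarized map $w^\top G$ with $w=\grad f(G(x))$ and try to get the reverse inclusion from Jacobian limits.

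\textbf{Step 1: the set on the right.} Write $S=\co\{vM : v\in\partial f(G(x)),\, M\in\partial G(x)\}$. Both $\partial f(G(x))$ and $\partial G(x)$ are nonempty and compact, by \cite[Prop.~2.6.2]{clarke1990generalizedgradients} and its scalar analogue. Hence $S$ is convex and compact. Since $\partial h(x)$ is also convex and compact, $\partial h(x)\subseteq S$ is equivalent to the support-function inequality
\[
h^\circ(x;\xi)\le \max_{s\in S} s\,\xi \quad\text{for every } \xi\in\R^n,
\]
where $h^\circ$ is Clarke's generalized directional derivative. This uses that $\partial h(x)$ is exactly the set whose support function is $h^\circ(x;\cdot)$.

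\textbf{Step 2: bounding $h^\circ$.} Fix $\xi$, and take $y_k\to x$ and $t_k\downarrow 0$ attaining the $\limsup$ in $h^\circ(x;\xi)$.
\begin{itemize}
\item Apply Lebourg's mean value theorem to $f$ on the segment $[G(y_k),G(y_k+t_k\xi)]$. This gives
\[
h(y_k+t_k\xi)-h(y_k)=\zeta_k\bigl(G(y_k+t_k\xi)-G(y_k)\bigr),\qquad \zeta_k\in\partial f(w_k),
\]
for some $w_k$ in that segment.
\item Apply the vector mean value inclusion for Clarke Jacobians \cite[Prop.~2.6.5]{clarke1990generalizedgradients}. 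This gives
\[
\tfrac1{t_k}\bigl(G(y_k+t_k\xi)-G(y_k)\bigr)\in \co\,\partial G([y_k,y_k+t_k\xi])\,\xi .
\]
\item By Lipschitz continuity of $G$, the points $w_k$ tend to $G(x)$ and the segments $[y_k,y_k+t_k\xi]$ shrink to $x$.
\item The maps $\partial f$ and $\partial G$ are upper semicontinuous with locally bounded values. Passing to subsequences, $\zeta_k\to\zeta\in\partial f(G(x))$, and the difference quotients converge to an element of $\partial G(x)\,\xi$. Here I use that $\partial G(x)$ is convex, so the convex hull adds nothing in the limit.
\end{itemize}
The limit of the left-hand side is therefore $\zeta M\xi$ for some $\zeta\in\partial f(G(x))$ and $M\in\partial G(x)$, hence at most $\max_{s\in S}s\,\xi$. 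This proves~\eqref{eq:chainrule_general}.

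\textbf{Step 3: the equality case.} If $f$ is strictly differentiable at $G(x)$, then $\partial f(G(x))=\{\grad f(G(x))^\top\}$. The set $\{\grad f(G(x))^\top M : M\in\partial G(x)\}$ is already convex as a linear image of a convex set, so Step~2 gives one inclusion. Set $w=\grad f(G(x))$. Strict differentiability means
\[
h(y+t\xi)-h(y)=w^\top\bigl(G(y+t\xi)-G(y)\bigr)+o(t)
\]
uniformly as $y\to x$ and $t\downarrow 0$. Hence $h^\circ(x;\cdot)=(w^\top G)^\circ(x;\cdot)$, and so $\partial h(x)=\partial(w^\top G)(x)$.

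For the reverse inclusion, i.e.\ $w^\top\partial G(x)\subseteq\partial(w^\top G)(x)$, I would first try the following. Take an extreme generator $M=\lim_k JG(x_k)$ at differentiability points $x_k\to x$. At each such point $w^\top G$ is differentiable with gradient $w^\top JG(x_k)$. Its limit $w^\top M$ would then lie in $\partial(w^\top G)(x)$, because the Clarke gradient contains all limits of gradients taken at points of differentiability; in the scalar case this set need not be restricted to a full-measure set.

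\textbf{Main obstacle.} I expect this reverse inclusion to be the hard part. The scalarization identity $\partial(w^\top G)(x)=w^\top\partial G(x)$ is delicate for Clarke Jacobians, and it is exactly what the equality in~\eqref{eq:chainrule_strict} hinges on. I would check carefully whether the limit argument above really suffices. If it does not, the fallback is to restrict to what the paper actually needs: the inclusion of $\partial F(x)$ into the chain-rule set. I would also verify that every later use of~\eqref{eq:clarkegradientF_main} relies only on that direction.
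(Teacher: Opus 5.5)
The paper gives no proof of this statement; it imports it verbatim from Clarke's Theorem~2.6.6. Your proposal is therefore a self-contained reconstruction rather than a competing route, and it is correct. Steps~1--2 are the standard mean-value argument for the inclusion: $h^\circ(x;\cdot)$ as support function, Lebourg's theorem for $f$, the vector mean value inclusion for $G$, and upper semicontinuity to pass to the limit. They work as written.

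Your reduction $\partial h(x)=\partial(w^\top G)(x)$ in Step~3 is also right. The uniform $o(t)$ follows from strict differentiability of $f$ at $G(x)$, because $\|G(y+t\xi)-G(y)\|\le L t\|\xi\|$ keeps the difference-quotient directions in a compact set.

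The ``main obstacle'' you flag is not an obstacle; your limit argument already suffices.
\begin{itemize}
\item If $G$ is differentiable at $x_k$, then so is $w^\top G$, with gradient $w^\top \jac G(x_k)$.
\item A gradient at a point of differentiability lies in the Clarke gradient there (Clarke, Prop.~2.2.2).
\item The graph of $z\mapsto\partial(w^\top G)(z)$ is closed (Clarke, Prop.~2.1.5).
\item Hence $w^\top M=\lim_k w^\top \jac G(x_k)\in\partial(w^\top G)(x)$ for every generator $M$ of $\partial G(x)$, not only extreme ones.
\item Since $M\mapsto w^\top M$ is linear and $\partial(w^\top G)(x)$ is convex, this extends to all of $\partial G(x)$.
\end{itemize}
This gives $w^\top\partial G(x)\subseteq\partial(w^\top G)(x)=\partial h(x)$. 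What is delicate for Clarke Jacobians is reconstructing $\partial G(x)$ from all scalarizations jointly, not the identity for one fixed $w$.

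Do not take your fallback. Keeping only the inclusion $\partial F(x)\subseteq\{\grad_x f+M^\top\grad_y f : M\in\partial y^*(x)\}$ would not cover what the paper needs. The proof of Lemma~\ref{lem:distancefullgoldstein} asserts $q_i\in\partial F(z_i)$ for $q_i$ built from an arbitrary $M_i\in\partial y^*(z_i)$, which is exactly the reverse direction of \eqref{eq:clarke_gradient_F}. The second equality in Definition~\ref{def:goldstein_composite}(i) also requires equality. So keep the equality argument; as shown above, it already goes through.
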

For a definition of strict differentiability, see \cite[Section 2.2]{clarke1990generalizedgradients}. For our purposes, it suffices to note that continuous differentiability implies strict differentiability.
\begin{proposition}[{\cite[Corollary to Prop.~2.2.1]{clarke1990generalizedgradients}}]\label{prop:strictdiff}
If $f: \R^m \to \R$ is continuously differentiable at $y$, then $f$ is strictly differentiable at $y$.
\end{proposition}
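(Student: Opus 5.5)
The plan is to verify Clarke's definition of strict differentiability directly, using the one-dimensional mean value theorem. Recall from \cite[Section~2.2]{clarke1990generalizedgradients} that $f$ is strictly differentiable at $y$ if there is a vector $\zeta$ such that, for every $v\in\R^m$,
\[
\lim_{y'\to y,\; t\downarrow 0}\frac{f(y'+tv)-f(y')}{t}=\inner{\zeta}{v},
\]
with the convergence uniform for $v$ in compact sets. We will show that this holds with $\zeta=\grad f(y)$.

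\textbf{Setup.} We read ``continuously differentiable at $y$'' as follows: there is a radius $r>0$ such that $f$ is differentiable on $\ball(y,r)$, and $\grad f$ is continuous at $y$. Fix a compact set $K\subset\R^m$ and let $\kappa=\max_{v\in K}\norm{v}$. Take $y'$ with $\norm{y'-y}<r/2$ and $0<t<r/(2\kappa+1)$. Then the whole segment $[y',y'+tv]$ lies in $\ball(y,r)$ for every $v\in K$.

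\textbf{Key step.} Fix such $y'$, $t$ and $v\in K$, and define the scalar function $\phi(s)=f(y'+stv)$ on $[0,1]$. It is differentiable there, with $\phi'(s)=t\inner{\grad f(y'+stv)}{v}$. The mean value theorem gives some $\theta\in(0,1)$ with $\phi(1)-\phi(0)=\phi'(\theta)$. Writing $\xi=y'+\theta tv$, this yields
\[
\Bigl|\tfrac{f(y'+tv)-f(y')}{t}-\inner{\grad f(y)}{v}\Bigr|=\bigl|\inner{\grad f(\xi)-\grad f(y)}{v}\bigr|\le \kappa\,\norm{\grad f(\xi)-\grad f(y)}.
\]

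\textbf{Conclusion.} The intermediate point satisfies $\norm{\xi-y}\le\norm{y'-y}+t\kappa$. This bound is independent of $v\in K$ and tends to $0$ as $y'\to y$ and $t\downarrow 0$. Continuity of $\grad f$ at $y$ therefore makes the right-hand side tend to $0$ uniformly over $v\in K$, which is exactly strict differentiability with $\zeta=\grad f(y)$.

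\textbf{Main obstacle.} The only delicate point is that the intermediate point $\xi$ depends on $y'$, $t$ and $v$, so we cannot pass to the limit pointwise in $\xi$. This is handled by the uniform bound on $\norm{\xi-y}$ above. We also need the segment to stay inside the neighbourhood where $f$ is differentiable, which the choice of $r/2$ and $r/(2\kappa+1)$ guarantees. In our setting, \Cref{ass:lipschitz} makes $\grad f$ Lipschitz, hence continuous everywhere, so the hypothesis holds at every point.
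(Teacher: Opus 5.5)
Your argument is correct and complete. The mean value theorem along the segment, the $v$-independent bound $\norm{\xi-y}\le\norm{y'-y}+t\kappa$, and continuity of $\grad f$ at $y$ together give the uniform-on-compacts limit in Clarke's definition with $\zeta=\grad f(y)$. The paper gives no proof of its own and only cites Clarke's Corollary to Prop.~2.2.1, which is, as far as I recall, justified by this same standard mean-value-theorem argument.
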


\begin{remark}[Gradient convention]\label{rem:gradient_convention}
Following the convention in \cite[Remark~2.6.3, Prop.~2.6.4]{clarke1990generalizedgradients}, the Clarke generalized gradient $\partial f(y)$ of a scalar-valued function $f: \R^m \to \R$ consists of \emph{row vectors} in $\R^{1 \times m}$. This ensures dimensional consistency in the chain rule \eqref{eq:chainrule_general}, where elements of $\partial f(G(x))$ multiply elements of $\partial G(x) \subseteq \R^{m \times n}$ from the left to yield elements in $\R^{1 \times n}$.

However, throughout this manuscript, we adopt the convention that gradients are \emph{column vectors}. Accordingly, we reformulate the chain rule \eqref{eq:chainrule_strict} as
\begin{equation}\label{eq:chainrule_column}
\partial h(x) = \left\{ M^\top \grad f(G(x)) : M \in \partial G(x) \right\},
\end{equation}
where elements of $\partial h(x)$ are now column vectors in $\R^n$. We use this column-vector convention for all Clarke generalized gradients hereafter.
\end{remark}

\subsection{(Partial) Goldstein stationary points}
We first introduce the $\delta$-Goldstein subdifferential in the general setting, which enlarges the Clarke generalized gradient by aggregating over a $\delta$-neighborhood. We then specialize to composite functions of the form $h(x) = f(G(x))$, where we introduce both the standard Goldstein subdifferential and our novel partial counterpart.

\begin{definition}[$\delta$-Goldstein subdifferential]
Let $h: \R^n \to \R$ be a locally Lipschitz function. For $\delta > 0$, the $\delta$-Goldstein subdifferential of $h$ at $x \in \R^n$ is defined as
\begin{equation*}
\goldstein h(x) = \co\left\{ \cup_{z \in \ball(x, \delta)} \partial h(z) \right\},
\end{equation*}
where $\ball(x, \delta) = \{z \in \R^n : \|z - x\| \leq \delta\}$ denotes the closed ball of radius $\delta$ centered at $x$ in the $\ell_2$-norm.

More generally, for a locally Lipschitz vector-valued function $G: \R^n \to \R^m$, we define the $\delta$-Goldstein subdifferential as
\begin{equation}\label{eq:goldstein_multivariate}
\goldstein G(x) = \co\left\{ \bigcup_{z \in \ball(x, \delta)} \partial G(z) \right\} \subseteq \R^{m \times n}.
\end{equation}
\end{definition}

For composite functions of the form $h(x) = f(G(x))$, we distinguish between two notions of Goldstein subdifferentials.
\begin{definition}[(Partial) Goldstein subdifferential for composite functions]\label{def:goldstein_composite}
Let $G: \R^n \to \R^m$ be locally Lipschitz near $x$ and let $f: \R^m \to \R$ be continuously differentiable near $G(x)$. Define $h(x) = f(G(x))$. For $\delta > 0$:
\begin{enumerate}[label=(\roman*)]
\item The \emph{$\delta$-Goldstein subdifferential} of $h$ at $x$ is
\begin{equation*}
\goldsteinfull h(x) = \co\left\{ \bigcup_{z \in \ball(x, \delta)} \partial h(z) \right\} = \co\left\{ \bigcup_{z \in \ball(x, \delta)} \left\{ M^\top \grad f(G(z)) : M \in \partial G(z) \right\} \right\},
\end{equation*}
where the second equality follows from \cref{thm:clarkechainrule,prop:strictdiff,rem:gradient_convention} and \cref{eq:chainrule_column}. 

\item The \emph{$\delta$-partial Goldstein subdifferential} of $h$ at $x$ is
\begin{equation*}
\goldsteinpartial h(x) = \left\{ M^\top \grad f(G(x)) : M \in \goldsteinfull G(x) \right\},
\end{equation*}
where $\goldsteinfull G(x) = \co\left\{ \bigcup_{z \in \ball(x, \delta)} \partial G(z) \right\}$.
\end{enumerate}
\end{definition}

We are now ready to present the notions of Goldstein and partial Goldstein stationarity.
\begin{definition}[(Partial) Goldstein stationary points for composite functions]\label{def:goldstein_stationary_composite}
Let $h(x) = f(G(x))$ be as in \cref{def:goldstein_composite}. A point $x \in \R^n$ is called:
\begin{enumerate}[label=(\roman*)]
\item A \emph{$(\delta, \varepsilon)$-Goldstein stationary point} if $\min\left\{ \|g\| : g \in \goldsteinfull h(x) \right\} \leq \varepsilon$.
\item A \emph{$(\delta, \varepsilon)$-partial Goldstein stationary point} if $\min\left\{ \|g\| : g \in \goldsteinpartial h(x) \right\} \leq \varepsilon$.
\end{enumerate}
\end{definition}

The distinction between Goldstein and partial Goldstein subdifferential lies in where the gradient of $f$ is evaluated: the subdifferential evaluates $\grad f$ at all points $G(z)$ for $z \in \ball(x, \delta)$, while the partial subdifferential evaluates $\grad f$ only at the center point $G(x)$. There is no general inclusion relationship between $\goldsteinfull h(x)$ and $\goldsteinpartial h(x)$, and neither stationarity notion in \cref{def:goldstein_stationary_composite} is a stronger or weaker stationarity condition than the other. We illustrate this with examples in \cref{sec:goldstein_examples}.

\subsection{Application to our problem setting}\label{sec:goldstein_setting}
We now specialize the general framework to our setting. Recall from \eqref{eq:setting} and \cref{ass:lipschitz} that our objective function is $F(x) = f(x, y^*(x))$, where $f: \R^{\dx} \times \R^{\dy} \to \R$ is smooth (i.e., differentiable with Lipschitz-continuous gradient) and $y^*: \R^{\dx} \to \R^{\dy}$ is Lipschitz continuous but potentially non-smooth.

To apply the chain rule framework, we write $F(x) = f(G(x))$ where $G: \R^{\dx} \to \R^{\dx + \dy}$ is defined by
\begin{equation*}
G(x) = \begin{pmatrix} x \\ y^*(x) \end{pmatrix}.
\end{equation*}
The function $G$ is locally Lipschitz since both the identity mapping $x \mapsto x$ and $y^*$ are Lipschitz. At points where $y^*$ is differentiable, the classical Jacobian of $G$ is
\begin{equation*}
\jac G(x) = \begin{pmatrix} I_{\dx} \\ \jac y^*(x) \end{pmatrix} \in \R^{(\dx + \dy) \times \dx},
\end{equation*}
where $I_{\dx}$ denotes the $\dx \times \dx$ identity matrix. The Clarke generalized Jacobian of $G$ is therefore
\begin{equation*}
\partial G(x) = \left\{ \begin{pmatrix} I_{\dx} \\ M \end{pmatrix} : M \in \partial y^*(x) \right\},
\end{equation*}
where $\partial y^*(x) = \co\left\{ \lim_{k \to \infty} \jac y^*(x_k) : x_k \to x, \; y^* \text{ differentiable at each } x_k \right\}$ according to \eqref{app:eq:clarkejac}.

Since $f$ is continuously differentiable by \cref{ass:lipschitz} (and hence strictly differentiable by \cref{prop:strictdiff}), applying \cref{thm:clarkechainrule} with the column-vector convention \eqref{eq:chainrule_column} yields:
\begin{align}
\partial F(x) &= \left\{ \begin{pmatrix} I_{\dx} & M^\top \end{pmatrix} \begin{pmatrix} \grad_x f(x, y^*(x)) \\ \grad_y f(x, y^*(x)) \end{pmatrix} : M \in \partial y^*(x) \right\} \nonumber \\
&= \left\{ \grad_x f(x, y^*(x)) + M^\top \grad_y f(x, y^*(x)) : M \in \partial y^*(x) \right\}. \label{eq:clarke_gradient_F}
\end{align}

Using this characterization, we can now express the Goldstein subdifferentials for our objective function $F(x)=f(x,y^*(x))$ for problem setting \eqref{eq:setting}. Under \cref{ass:lipschitz}, for $\delta > 0$:
\begin{enumerate}[label=(\roman*)]
\item The \emph{$\delta$-Goldstein subdifferential} of $F$ at $x \in \R^{\dx}$ is
\begin{equation}\label{eq:goldstein_full_F}
\goldsteinfull F(x) = \co\left\{ \bigcup_{z \in \ball(x, \delta)} \left\{ \grad_x f(z, y^*(z)) + M^\top \grad_y f(z, y^*(z)) : M \in \partial y^*(z) \right\} \right\}.
\end{equation}

\item The \emph{$\delta$-partial Goldstein subdifferential} of $F$ at $x \in \R^{\dx}$ is
\begin{equation}\label{eq:goldstein_partial_F}
\goldsteinpartial F(x) = \left\{ \grad_x f(x, y^*(x)) + M^\top \grad_y f(x, y^*(x)) : M \in \goldsteinfull y^*(x) \right\},
\end{equation}
where $\goldsteinfull y^*(x) = \co\left\{ \bigcup_{z \in \ball(x, \delta)} \partial y^*(z) \right\}$.
\end{enumerate}

These are the notions of stationarity used throughout our convergence analysis in \cref{sec:convergence_goldstein}. As illustrated by the examples in \cref{sec:goldstein_examples}, neither notion is in general stronger than the other -- a point may be a $(\delta,\varepsilon)$-Goldstein stationary point without being a $(\delta,\varepsilon)$-partial Goldstein stationary point, and vice versa.

\subsection{Comparison between Goldstein and partial Goldstein stationarity}\label{sec:goldstein_examples}
\Cref{ex:full_not_partial} exhibits a point that is $(\delta,\varepsilon)$-Goldstein stationary but not $(\delta,\varepsilon)$-partial Goldstein stationary. %

\begin{example}[$(\delta,\varepsilon)$-Goldstein stationary, but not $(\delta,\varepsilon)$-partial Goldstein stationary]\label{ex:full_not_partial}
Define $f(x,y) = (x-1)^2 + y$ and $y^*(x) = |x|$, so that $F(x) = (x-1)^2 + |x|$. The follower response $y^*$ is Lipschitz and non-differentiable at $x = 0$; $f$ is smooth. The composite $F$ is non-differentiable at $x = 0$ with one-sided derivatives $F'(0^+) = -1$ and $F'(0^-) = -3$, giving Clarke gradient 
\begin{equation*}
    \partial F(0) = \co\!\left\{\lim_{z \to 0^+} F'(z),\; \lim_{z \to 0^-} F'(z)\right\} = \co\{-1, -3\} = [-3, -1],
\end{equation*}
whereas $F$ is continuously differentiable on $(-\infty,0)$ and $(0,\infty)$, so $\partial F(x) = \{F'(x)\}$ for $x\neq 0$ by \cref{prop:strictdiff}, where
\begin{equation*}
F'(x) = \begin{cases}
2x - 1, & x > 0, \\
2x - 3, & x < 0.
\end{cases}
\end{equation*}
The global minimizer of $F$ is at $x = \tfrac{1}{2}$, where $F(\tfrac{1}{2}) = \tfrac{3}{4}$.

\smallskip
\textbf{Goldstein subdifferential.} For $\delta > 0$, $F$ is differentiable on $\ball(0,\delta)\setminus\{0\}$ with $F'(z) = 2z-1$ for $z > 0$ and $F'(z) = 2z-3$ for $z < 0$. Collecting Clarke gradients over $\ball(0,\delta)$, the Goldstein subdifferential at $x_0 = 0$ equals
\begin{equation*}
\goldsteinfull F(0) =  \co\left\{\cup_{z \in \ball(x, \delta)} \partial F(z) \right\}  = [-2\delta - 3,\; 2\delta - 1].
\end{equation*}
This interval contains zero if and only if $\delta \geq \tfrac{1}{2}$:
\begin{itemize}
\item[-] $\delta < \tfrac{1}{2}$: all elements are negative and $\min\{|g|:g\in\goldsteinfull F(0)\} = 1 - 2\delta > 0$.
\item[-] $\delta \geq \tfrac{1}{2}$: the ball reaches the minimiser $z = \tfrac{1}{2}$, so $0 \in \goldsteinfull F(0)$.
\end{itemize}
Hence $x = 0$ is a $(\delta,\varepsilon)$-Goldstein stationary point for every $\delta \geq \tfrac{1}{2}$ and $\varepsilon \geq 0$.

\smallskip
\textbf{Partial Goldstein subdifferential.} The Clarke Jacobian of $y^*(x) = |x|$ is $\jac y^*(z) = \mathrm{sgn}(z)$ for $z \neq 0$, so $\partial y^*(0) = \co\{-1,\,1\} = [-1,1]$. Since the Clarke Jacobian at any $z \in \ball(0,\delta)$ is a singleton contained in $[-1,1]$, the Goldstein subdifferential of $y^*$ at $x_0 = 0$ is identical to $\goldsteinfull y^*(0) = [-1,1]$ for all $\delta > 0$. Combined with the partial gradients $\grad_x f = -2$ and $\grad_y f = 1$ at $(x_0, y^*(x_0)) = (0, 0)$, the partial Goldstein subdifferential of $F$ at $x_0=0$ equals 
\begin{align*}
\goldsteinpartial F(0) &= \bigl\{ \grad_x f(0, y^*(0)) + M \cdot \grad_y f(0, y^*(0)) : M \in \goldsteinfull y^*(0) \bigr\} \\
&= \{ -2 + M : M \in [-1, 1] \} = [-3,\,-1].
\end{align*}
The minimum norm of $\goldsteinpartial F(0)$ equals $1$ for all $\delta > 0$, so $x = 0$ is \emph{not} a $(\delta,\varepsilon)$-partial Goldstein stationary point for any $\varepsilon < 1$, regardless of~$\delta$.

The partial Goldstein subdifferential is constant in $\delta$: the outer gradients $\grad_x f$ and $\grad_y f$ are fixed (and non-zero) at $x_0 = 0$, and the Goldstein subdifferential $\goldsteinfull y^*(0)$ does not grow with $\delta$. The Goldstein subdifferential, on the other hand, grows with $\delta$ and, once $\delta \geq \tfrac{1}{2}$, reaches past the minimiser $z = \tfrac{1}{2}$, causing it to contain zero.
\end{example}

\begin{figure}[t]
\centering
\begin{subfigure}[t]{0.5\textwidth}
\centering
\begin{tikzpicture}[baseline=(current axis.north)]
\begin{axis}[
    width=0.95\textwidth,
    height=0.80\textwidth,
    xlabel={$x$},
    ylabel={$F(x)$},
    title={$f(x,y)=(x-1)^2+y$,\;$y^*(x)=|x|$},
    title style={font=\small,yshift=10pt},
    label style={font=\small},
    tick label style={font=\footnotesize},
    grid=both,
    grid style={gray!30},
    xmin=-1.5, xmax=2,
    ymin=0, ymax=4.5,
    domain=-1.5:2,
    samples=200,
    axis lines=middle,
    every axis x label/.style={at={(ticklabel* cs:1)}, anchor=west},
    every axis y label/.style={at={(ticklabel* cs:1)}, anchor=south},
]
\addplot[blue, thick] {(x-1)^2 + abs(x)};
\addplot[only marks, mark=*, mark size=2.5pt, red] coordinates {(0, 1)};
\node[red, anchor=south east, font=\small] at (axis cs:0, 0.3) {$x_0 = 0$};
\addplot[only marks, mark=diamond*, mark size=3pt, black!60!green] coordinates {(0.5, 0.75)};
\node[black!60!green, anchor=north, font=\small] at (axis cs:0.5, 0.60) {$x = \tfrac{1}{2}$};
\end{axis}
\end{tikzpicture}
\caption{Example~\ref{ex:full_not_partial}: $F(x) = (x-1)^2 + |x|$}\label{fig:plot_goldsteinexamples_a}
\end{subfigure}%
\begin{subfigure}[t]{0.5\textwidth}
\centering
\begin{tikzpicture}[baseline=(current axis.north)]
\begin{axis}[
    width=0.95\textwidth,
    height=0.80\textwidth,
    xlabel={$x$},
    ylabel={$F(x)$},
    title={$f(y_1,y_2)=y_1-y_2^2$,\;$y^*(x)=(|x|,|x{-}1|)^\top$},
    title style={font=\small,yshift=10pt},
    label style={font=\small},
    tick label style={font=\footnotesize},
    grid=both,
    grid style={gray!30},
    xmin=-1.5, xmax=2.5,
    ymin=-3, ymax=2.5,
    domain=-1.5:2.5,
    samples=200,
    axis lines=middle,
    every axis x label/.style={at={(ticklabel* cs:1)}, anchor=west},
    every axis y label/.style={at={(ticklabel* cs:1)}, anchor=south},
]
\addplot[blue, thick] {abs(x) - (x-1)^2};
\addplot[only marks, mark=*, mark size=2.5pt, red] coordinates {(0, -1)};
\node[red, anchor=south east, font=\small] at (axis cs:-0.05, -1.0) {$x_0 = 0$};
\addplot[only marks, mark=square*, mark size=2.5pt, orange] coordinates {(1, 1)};
\node[orange, anchor=south east, font=\small] at (axis cs:0.95, 1.15) {$z=1$};
\addplot[only marks, mark=triangle*, mark size=3pt, black!60!green] coordinates {(1.5, 1.25)};
\node[black!60!green, anchor=south west, font=\small] at (axis cs:1.55, 1.15) {$x = \tfrac{3}{2}$};
\end{axis}
\end{tikzpicture}\vspace*{5.2mm}
\caption{Example~\ref{ex:partial_not_full}: $F(x) = |x| - (x-1)^2$}\label{fig:plot_goldsteinexamples_b}
\end{subfigure}
\caption{Graphs of $F(x) = f(x, y^*(x))$ for Examples~\ref{ex:full_not_partial} and~\ref{ex:partial_not_full}. \textbf{Left:} $F(x) = (x-1)^2+|x|$ is non-differentiable at $x_0=0$ (red dot). For $\delta \geq \tfrac{1}{2}$, the Goldstein subdifferential reaches the minimizer $z = \tfrac{1}{2}$ (green diamond) and contains zero, while the partial Goldstein subdifferential equals $[-3,-1]$ for all $\delta > 0$. \textbf{Right:} $F(x) = |x|-(x-1)^2$ is non-differentiable at $x_0=0$ (red dot). The non-differentiability of $y^*$ at $z=1$ (orange square) is invisible in $F$.
For $\delta \in [1, \tfrac{3}{2})$, the partial Goldstein subdifferential spans $[-1,3]$ and contains zero due to the non-differentiability of $y^*$ at $z=1$ entering $\ball(0,\delta)$, even though this does not propagate to $F$; the Goldstein subdifferential remains strictly positive. The genuine critical point of $F$ is at $z = \tfrac{3}{2}$ (green triangle), which enters the Goldstein subdifferential for $\delta \ge \tfrac{3}{2}$.
}
\label{fig:plot_goldsteinexamples}
\end{figure}

\Cref{ex:partial_not_full} exhibits a point that is $(\delta,\varepsilon)$-partial Goldstein stationary  but not $(\delta,\varepsilon)$-Goldstein stationary.

\begin{example}[$(\delta,\varepsilon)$-partial Goldstein stationary, but not $(\delta,\varepsilon)$-Goldstein stationary]\label{ex:partial_not_full}
Define $f(y_1,y_2) = y_1 - y_2^2$ and $y^*(x) = \bigl(|x|,\,|x-1|\bigr)^\top$, so that $F(x) = |x| - (x-1)^2$. The follower response $y^*$ is Lipschitz and non-differentiable at $x = 0$ and $x = 1$. However, the non-differentiability of $y^*$ at $z = 1$ does not propagate to $F$, which is non-differentiable only at $x = 0$. %
The one-sided derivatives are $F'(0^+) = 3$ and $F'(0^-) = 1$, giving Clarke gradient $\partial F(0) = [1,3]$.

\smallskip
\textbf{Goldstein subdifferential.} $F$ is continuously differentiable on $(-\infty,0)$ and $(0,\infty)$, so $\partial F(x) = \{F'(x)\}$ for $x\neq 0$ by \cref{prop:strictdiff}, with:
\[F'(x) = \begin{cases}
3 - 2x, & x > 0, \\
1 - 2x, & x < 0,
\end{cases}\]
both are strictly positive near $x = 0$. Collecting Clarke gradients over $\ball(0,\delta)$:
\begin{itemize}
\item[-] $\delta \in (0,1)$: all gradients in the ball lie in $[1,3]$, so $\goldsteinfull F(0) = [1,3]$ and $\min\{|g|:g\in\goldsteinfull F(0)\} = 1 > 0$.
\item[-] $\delta \in [1, \tfrac{3}{2})$: the range of $F'$ over the ball extends below $1$ (from the positive side) and above $3$ (from the negative side), giving $\goldsteinfull F(0) = [3-2\delta,\,1+2\delta]$ with $\min\{|g|\} = 3 - 2\delta > 0$.
\item[-] $\delta \geq \tfrac{3}{2}$: the ball reaches $z = \tfrac{3}{2}$ where $F'(\tfrac{3}{2}) = 0$, so $0 \in \goldsteinfull F(0)$.
\end{itemize}
Hence $x = 0$ is not a $(\delta,\varepsilon)$-Goldstein stationary point for $\delta \in (0,1)$ and $\varepsilon < 1$, nor for $\delta \in [1, \tfrac{3}{2})$ and $\varepsilon < 3-2\delta$.

\smallskip
\textbf{Partial Goldstein subdifferential.} The Jacobian of $y^*$ at differentiable points $z \notin \{0,1\}$ is
\[
\jac y^*(z) = \begin{cases}(-1,-1)^\top, & z < 0,\\ (1,-1)^\top, & 0 < z < 1,\\ (1,\;1)^\top, & z > 1,\end{cases}
\]
with Clarke Jacobians $\partial y^*(0) = \co\{(-1,-1)^\top,(1,-1)^\top\}$ and $\partial y^*(1) = \co\{(1,-1)^\top,(1,1)^\top\}$. Whether the point $z = 1$ of non-differentiability of $y^*$ falls within $\ball(0,\delta)$ determines the Goldstein subdifferential of $y^*$:
\begin{itemize}
\item[-] $\delta < 1$: only the non-differentiable point $z = 0$ lies in $\ball(0,\delta)$, so
    \[
    \goldsteinfull y^*(0) = \co\!\left\{\begin{pmatrix}-1\\-1\end{pmatrix},\begin{pmatrix}1\\-1\end{pmatrix}\right\} = \left\{\begin{pmatrix}a\\-1\end{pmatrix} : a \in [-1,1]\right\}.
    \]
    The partial subdifferential is $\goldsteinpartial F(0) = \{a\cdot 1 + (-1)\cdot(-2) : a \in [-1,1]\} = [1,3]$, with $\min\{|g|\} = 1 > 0$.

\item[-] $\delta \geq 1$:  the point $z = 1$ of non-differentiability of $y^*$ enters the ball. The Goldstein subdifferential of $y^*$ becomes
    \[
    \goldsteinfull y^*(0) = \co\!\left\{\begin{pmatrix}-1\\-1\end{pmatrix},\begin{pmatrix}1\\-1\end{pmatrix},\begin{pmatrix}1\\1\end{pmatrix}\right\} = \left\{\begin{pmatrix}a\\b\end{pmatrix}: -1 \leq b \leq a \leq 1\right\}.
    \]
    Combined with partial derivatives $\grad_x f = 0$ and $\grad_y f(0,1) = (1,-2)^\top$ at $x_0 = 0$, $y^*(0) = (0,1)^\top$, the partial subdifferential is $\goldsteinpartial F(0) = \{a - 2b : -1 \leq b \leq a \leq 1\}$. Evaluating at the three vertices:
    \begin{align*}
    (-1,-1) &\;\mapsto\; -1 + 2 = 1, \\
    (1,-1) &\;\mapsto\; 1 + 2 = 3, \\
    (1,\;1) &\;\mapsto\; 1 - 2 = -1.
    \end{align*}
    Since the vertex values have mixed signs, $\goldsteinpartial F(0) = [-1,3]$, which contains zero.
\end{itemize}
Hence $x = 0$ is a $(\delta,\varepsilon)$-partial Goldstein stationary point for every $\delta \geq 1$ and $\varepsilon \geq 0$.

For $\delta \in [1, \tfrac{3}{2})$: $x = 0$ is $(\delta,\varepsilon)$-partial Goldstein stationary for all $\varepsilon \geq 0$, but not $(\delta,\varepsilon)$-Goldstein stationary for $\varepsilon < 3-2\delta$. For $\delta \geq \tfrac{3}{2}$, $x = 0$ is both $(\delta,\varepsilon)$-partial Goldstein stationary and $(\delta,\varepsilon)$-Goldstein stationary, for all $\varepsilon \geq 0$.

The mechanism is the reverse of Example~\ref{ex:full_not_partial}: the non-differentiability of $y^*$ at $z = 1$ is invisible in $F$, but once $\delta \geq 1$ it contributes the Clarke Jacobian $\partial y^*(1)$ to $\goldsteinfull y^*(0)$, enriching the partial subdifferential enough to include zero. The Goldstein subdifferential, collecting only genuine Clarke gradients of $F$ in $\ball(0,\delta)$, remains strictly positive until the ball reaches the true critical point at $z = \tfrac{3}{2}$.
\end{example}

Taken together, the two examples illustrate structurally different phenomena. In Example~\ref{ex:full_not_partial}, the Goldstein subdifferential certifies approximate stationarity by reaching a genuine minimizer of $F$ within the $\delta$-ball; the partial subdifferential correctly identifies non-stationarity because the outer gradients $\grad_x f$ and $\grad_y f$ at $x_0$ point consistently away from zero. In \Cref{ex:partial_not_full}, the roles reverse: the partial subdifferential is affected by the non-differentiability of $y^*$ at $z = 1$, which does not propagate to $F$ but contributes Clarke Jacobians to $\goldsteinfull y^*(0)$ once $\delta \geq 1$, shifting the partial subdifferential to include zero. The Goldstein subdifferential correctly reports non-stationarity until it reaches the true critical point of $F$ at $z = \tfrac{3}{2}$.
\section{Proof of \texorpdfstring{Theorem~\ref{thm:convergencegoldsteinpoint}}{Theorem~\ref{thm:convergencegoldsteinpoint}}}

This section contains the proofs of \cref{thm:convergencegoldsteinpoint}. \Cref{sec:smoothing-properties} establishes smoothing properties of the equilibrium response $y_\mu$. \Cref{sec:surrogate-smoothness} establishes smoothness of the surrogate $F_\mu$ and analyzes the bias of the gradient estimator $g_t$. \Cref{sec:convergence_goldstein} establishes convergence to $(\delta,\varepsilon)$-Goldstein and $(\delta,\varepsilon)$-partial Goldstein stationary points. We begin with a proof overview.

\paragraph{Step 1: Smoothing properties of $y_\mu$ (\Cref{sec:smoothing-properties}).} We extend classical smoothing results for scalar functions to the vector-valued setting $\R^{\dx} \to \R^{\dy}$, showing that the smoothed equilibrium response $y_\mu(x)$ is continuously differentiable with an explicit expression for its Jacobian $\jac y_\mu(x)$ (\Cref{lem:ysmooth-differentiable-lipschitz}). We establish that $y_\mu(x)$ inherits the Lipschitz continuity of $y^*$ (\Cref{lem:properties-smoothing}). Further, we show that in the vector-valued and constrained setting $\R^{\dx} \to \Y$, $y_\mu(x)$ remains in $\Y$ if $\Y$ is closed and convex (\Cref{rem:ymuiny}), ensuring the smoothed followers' response $y_\mu$ remains feasible in our setting.

\paragraph{Step 2: Smoothness of $F_\mu$ and estimator properties (\Cref{sec:surrogate-smoothness}).} Combining smoothness of $f$ and smoothness of $y_\mu$, we show that $F_\mu(x) = f(x, y_\mu(x))$ is differentiable and $L_F$-smooth (\cref{lem:descent-Fsmooth}) and satisfies $|F_\mu(x) - F(x)| \leq L_f L_y \mu$ (\cref{pro:Fsmoothbound}). 
We then establish properties of the estimator $g_t$ used in \algpzos. The Jacobian estimator $H_t$ is unbiased for $\jac y_\mu(x_t)$ (\cref{lem:jacobianunbiasedestimator}), and we bound the second moment $\E[\norm{g_t}^2 \mid x_t]$ (\cref{lem:estimatorsquared}). %
Since $y_\mu(x_t)$ is not accessible in closed form, $g_t$ evaluates the partial derivatives of $f$ at $(x_t, y^*(x_t))$ rather than $(x_t, y_\mu(x_t))$, making it a biased estimator of $\grad F_\mu(x_t)$; we show this bias is bounded by $\mc{O}(\mu)$ (\cref{lem:difference-estimate-gradfsmooth}).
\paragraph{Step 3: Convergence to (partial) Goldstein stationarity (\Cref{sec:convergence_goldstein}).} 
Existing zero-order methods set the smoothing parameter $\mu$ equal to the desired radius $\delta$ of the $\delta$-Goldstein subdifferential, then smooth the composed objective $F$ directly via $\Tilde{F}_\mu = \E_{u \sim \U(\ball^{\dx})}[F(x + \mu u)] = \E_{u\sim\U(\ball^{\dx})}[f(x+\mu u,y^*(x+\mu u)]$. $\tilde{g}_t$ of \algzos is an unbiased estimator of $\grad \tilde{F}_\mu$, and used to show convergence of $\Tilde{F}$ via the descent lemma. Containment of $\Tilde{F}_\mu$ (with $\mu=\delta$) in $\goldsteinfull F(x)$ immediately yields convergence of \algzos to $(\delta,\varepsilon)$-Goldstein stationary points.

This route is unavailable to us. Our surrogate function $F_\mu = f(x,y_\mu)$ does not smooth $F$ directly, as our objective is to utilize knowledge of the partial gradients of $f$. Since $y_\mu(x)$ is not accessible in closed form, our estimator $g_t$ evaluates the partial derivatives of $f$ at $(x_t, y^*(x_t))$ rather than $(x_t,y_\mu(x_t))$, so our estimator is biased for $\grad F_\mu$. This bias is fundamental and cannot be eliminated with a different estimator of $\grad F_\mu$: even an unbiased estimator $\hat{y}$ of $y_\mu(x_t)$ would yield a biased estimator of $\grad F_\mu(x_t)$, since $\E[\grad_x f(x, \hat{y})] \neq \grad_x f(x, y_\mu(x))$ and $\E[\grad_y f(x_t, \hat{y})] \neq \grad_y f(x_t, y_\mu(x_t))$ even if $\E[\hat{y}] = y_\mu(x)$ \footnote{This follows since, for a general nonlinear mapping $h$ and a random variable $z$, $\E[h(z)] \neq h(\E[z])$.} and introducing bias in the estimation of $\grad_x f$ and $\grad_y f$ (which are the parts where we want to exploit knowledge) defeats the purpose of exploiting information about the known gradients. We circumvent this by decoupling $\mu$ from $\delta$: choosing $\mu$ small relative to $\varepsilon$ ensures the $\mc{O}(\mu)$ bias bound from \cref{lem:difference-estimate-gradfsmooth} stays below the target tolerance $\varepsilon$, while $\mu \leq \delta$ ensures that $\goldsteinfullmu \subseteq \goldsteinfull F$ and $\goldsteinpartialmu \subseteq \goldsteinpartial F$.

For the \emph{partial} Goldstein subdifferential, we first show via a strict hyperplane separation argument that $\jac y_\mu(x) \in \goldsteinfullmu y^*(x)$ (\cref{lem:jacobiangoldstein}). Since $\goldsteinpartialmu F(x)$ fixes the partial gradients of $f$ at the center $(x, y^*(x))$ and varies only the Clarke Jacobian of $y^*$, this containment implies $\E[g_t \mid x_t] \in \goldsteinpartial F(x_t)$ directly for $\delta \geq \mu$. Combining this with the descent lemma for $F_\mu$ yields convergence to $(\delta, \varepsilon)$-partial Goldstein stationarity (\cref{thm:partial_goldstein}), where $\mu$ is chosen sufficiently small relative to $\varepsilon$ to ensure the bias term $\mc{O}(\mu)$ remains below the target tolerance $\varepsilon$.

For the Goldstein subdifferential, $\E[g_t \mid x_t]$ does not lie in $\goldsteinfullmu F(x_t)$ directly, since the latter also varies the partial gradients of $f$ over the $\mu$-ball. Instead, we apply Carathéodory's theorem to show that $\grad F_\mu(x)$ lies within distance $\mc{O}(\mu)$ of $\goldsteinfullmu F(x)$ (\cref{lem:distancefullgoldstein}), and $\mu$ is chosen to balance the distance of $\grad F_\mu(x)$ to $\goldsteinfullmu F(x)$ and the bias of $g$ for $\grad F_\mu(x)$, ensuring the sum of both terms is below the target tolerance $\varepsilon$ (\cref{thm:full_goldstein}).
\subsection{Smoothing Properties of the Equilibrium Response}\label{sec:smoothing-properties}
We establish that the smoothed response $y_\mu(x) = \E_{u \sim \U(\ball^{\dx})}[y^*(x + \mu u)]$ is continuously differentiable and derive an explicit expression for its Jacobian matrix $\jac y_\mu(x)$. Subsequently, we establish several fundamental properties of $y_\mu$ and $\jac y_\mu$. Our approach extends known smoothing results for scalar-valued functions to the vector-valued setting $\R^{\dx} \to \R^{\dy}$.

We build on the following classical result for smoothing scalar-valued functions.
\begin{theorem}[{\cite[Lemma 1(i)]{cui2023mpec}, \cite[Lemma 2.1]{flaxman2005}}]\label{thm:smoothing-gradient}
    Let $h: \R^{\dx} \to \R$ be a continuous function, and let $\mu > 0$ be a given scalar. Define $h_\mu: \R^{\dx} \to \R$ by
    \begin{equation*}
        h_\mu(x) = \E_{u \sim \U(\ball^{\dx})}[h(x + \mu u)].
    \end{equation*}
    Then $h_\mu$ is continuously differentiable. In particular, for any $x \in \R^{\dx}$, the gradient $\grad_x h_\mu(x) \in \R^{\dx}$ is given by
    \begin{equation*}
        \grad_x h_\mu(x) = \frac{\dx}{\mu} \E_{v \sim \U(\sphere^{\dx})}[h(x + \mu v) v].
    \end{equation*}
\end{theorem}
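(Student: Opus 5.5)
The plan is to reduce everything to a statement about integrals of $h$ over a moving ball and apply the divergence theorem, first for smooth $h$ and then for continuous $h$ by approximation. Since $u \sim \U(\ball^{\dx})$, a change of variables gives
\[
h_\mu(x) = \frac{1}{\vol(\mu\ball^{\dx})}\int_{\ball(x,\mu)} h(z)\,dz = \frac{1}{\mu^{\dx}\vol(\ball^{\dx})}\int_{\mu\ball^{\dx}} h(x+w)\,dw .
\]
I will prove the formula for the gradient first when $h$ is smooth. In that case, differentiating under the integral sign is legitimate because $\grad h$ is bounded on compacts. This gives
\[
\grad \int_{\mu\ball^{\dx}} h(x+w)\,dw = \int_{\mu\ball^{\dx}} \grad h(x+w)\,dw .
\]
Applying the divergence theorem componentwise, to the vector fields $h(x+\cdot)e_i$, turns this into $\int_{\mu\sphere^{\dx}} h(x+w)\,\frac{w}{\mu}\,dS(w)$.

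The next step is to rewrite the surface integral as an expectation over the unit sphere. Parametrising $w=\mu v$ with $v\in\sphere^{\dx}$ and normalising by the surface measure turns it into $\mu^{\dx-1}\,\mathrm{area}(\sphere^{\dx})\,\E_{v\sim\U(\sphere^{\dx})}[h(x+\mu v)v]$. The identity $\mathrm{area}(\sphere^{\dx}) = \dx\,\vol(\ball^{\dx})$ then makes the prefactor collapse to exactly $\dx/\mu$. This yields the stated formula for smooth $h$.

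For merely continuous $h$, I would mollify: set $h_k = h * \rho_{1/k}$ with a standard mollifier $\rho$. Then $h_k \to h$ uniformly on every compact set. Two consequences follow.
\begin{itemize}
\item $(h_k)_\mu \to h_\mu$ uniformly on compacts, since the averaging is over a bounded ball.
\item The smooth-case gradients $\frac{\dx}{\mu}\E_v[h_k(x+\mu v)v]$ converge uniformly on compacts to $\Phi(x) \doteq \frac{\dx}{\mu}\E_v[h(x+\mu v)v]$, because $\|v\|=1$ and $h_k\to h$ uniformly on $\ball(x,\mu)$-neighbourhoods of any compact set.
\end{itemize}
The classical theorem on uniform convergence of derivatives then gives that $h_\mu$ is differentiable with $\grad h_\mu = \Phi$. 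Concretely, one writes $(h_k)_\mu(x+s)-(h_k)_\mu(x)=\int_0^1 \grad (h_k)_\mu(x+\tau s)^\top s\,d\tau$ and passes to the limit.

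Finally, continuity of $\Phi$ follows from uniform continuity of $h$ on compact sets: $\|\Phi(x)-\Phi(x')\| \le \frac{\dx}{\mu}\sup_{v}|h(x+\mu v)-h(x'+\mu v)| \to 0$ as $x'\to x$. Hence $h_\mu$ is continuously differentiable.

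The main obstacle is the passage from smooth to merely continuous $h$. Differentiation under the integral is not directly available there, and this is exactly what the mollification-plus-uniform-convergence step is designed to handle. The remaining steps are routine calculus.
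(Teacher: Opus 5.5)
Your proof is correct. The paper gives no proof of this statement and simply imports it from \cite{flaxman2005,cui2023mpec}. Your smooth-case computation is the divergence-theorem identity those sources use: $\grad\int_{\mu\ball^{\dx}}h(x+w)\,dw$ equals the flux integral $\int_{\mu\sphere^{\dx}}h(x+w)\,\frac{w}{\mu}\,dS(w)$, and the surface-to-volume ratio $\mathrm{area}(\sphere^{\dx})/\vol(\ball^{\dx})=\dx$ then produces the factor $\dx/\mu$.

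What your write-up adds is the approximation step. The divergence theorem cannot be applied to $h$ itself when $h$ is merely continuous. Mollifying, using locally uniform convergence of both $(h_k)_\mu$ and the sphere-formula gradients, and passing to the limit in the integral form of the mean value theorem is what justifies the identity under the stated hypothesis. It also gives continuity of the gradient for free, since the gradient is a locally uniform limit of continuous maps.

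This route differs from the mechanism the paper uses later for $y_\mu$, in Step~1 of \cref{lem:jacobiangoldstein}. There, differentiation under the integral is justified by dominated convergence of difference quotients. That needs Lipschitz continuity and Rademacher's theorem, and it yields the ball-average representation $\E_{u}[\jac y^*(x+\mu u)]$. Your argument needs only continuity and yields the sphere representation directly, which is the form the paper's two-point estimators rely on.
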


We extend uniform smoothing to the vector-valued and constrained case, and define, for a vector-valued and constrained mapping $y^*: \R^{\dx} \to \Y \subseteq \mb{R}^{\dy}$ and a given scalar $\mu > 0$, the mapping $y_\mu:\mb{R}^{\dx}\to\mb{R}^{\dy}$ by
\begin{equation}\label{eq:ysmoothvector}
    y_\mu(x) = \E_{u \sim \U(\ball^{\dx})}[y^*(x + \mu u)] = \begin{pmatrix} \E_{u \sim \U(\ball^{\dx})}[y^*_1(x + \mu u)] \\ \vdots \\ \E_{u \sim \U(\ball^{\dx})}[y^*_{\dy}(x + \mu u)] \end{pmatrix}, 
\end{equation}
where $u$ is uniformly distributed over the unit ball $\ball^{\dx} = \{u \in \R^{\dx} : \norm{u} \leq 1\}$ and $y^*_i(x)$ denotes the $i$-th component of $y^*(x) \in \Y \subseteq \R^{\dy}$. 

Since $y_\mu(x)$ is defined as an expectation, it is not immediate that 
$y_\mu(x)$ remains in $\Y$; in fact, averages of elements of an arbitrary set $\Y$ need not lie in $\Y$ themselves. Below, we show that if $\Y$ is closed and convex, the smoothed response $y_\mu(x)$ remains in the set $\Y$.%

\begin{lemma}\label{rem:ymuiny}
    Under \cref{ass:lipschitz}, let $y^*: \R^{\dx} \to \Y$ be the unique response of the followers for given $x\in\mb{R}^{\dx}$, where $y^*$ is $L_y$-Lipschitz continuous and $\Y \subseteq \R^{\dy}$ is closed and convex. Then $y_\mu(x) = \E_{u \sim \U(\ball^{\dx})}[y^*(x + \mu u)] \in \Y$.
\end{lemma}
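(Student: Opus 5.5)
The plan is to show that $y_\mu(x)$ cannot be strictly separated from $\Y$. Suppose, for contradiction, that $\bar y \doteq y_\mu(x) \notin \Y$.

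First, the expectation is well defined. By \cref{ass:lipschitz}, $y^*$ is $L_y$-Lipschitz, hence continuous and measurable. On the compact set $x + \mu\ball^{\dx}$ it is bounded, with $\norm{y^*(x+\mu u) - y^*(x)} \le L_y\mu$. Thus each component in \eqref{eq:ysmoothvector} is the integral of a bounded measurable function, so $\bar y \in \R^{\dy}$ exists. Note also that $y^*(x+\mu u) \in \Y$ for every $u \in \ball^{\dx}$, because $y^*$ takes values in $\Y$ by definition.

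Next, separate. Since $\Y$ is nonempty, closed and convex and $\bar y \notin \Y$, the strict separating hyperplane theorem (equivalently, projection onto $\Y$) gives $a \in \R^{\dy}$ and $b \in \R$ with $\inner{a}{y} \le b$ for all $y \in \Y$ and $\inner{a}{\bar y} > b$. A concrete choice is $a = \bar y - P_\Y(\bar y)$ and $b = \inner{a}{P_\Y(\bar y)}$.

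Then use linearity of expectation. For every $u$, $\inner{a}{y^*(x+\mu u)} \le b$. Integrating against the uniform distribution on $\ball^{\dx}$ gives
\[
\inner{a}{\bar y} = \E_{u}\bigl[\inner{a}{y^*(x+\mu u)}\bigr] \le b,
\]
which contradicts $\inner{a}{\bar y} > b$. Therefore $y_\mu(x) \in \Y$.

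There is no real obstacle here. The only points needing care are justifying integrability, which follows from Lipschitz continuity and compactness of the ball, and interchanging the linear functional $\inner{a}{\cdot}$ with the expectation, which holds for finite-dimensional Bochner or componentwise integrals.
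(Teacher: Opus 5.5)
Your proposal is correct and follows essentially the same route as the paper: assume $y_\mu(x)\notin\mathcal{Y}$, strictly separate it from the closed convex set $\mathcal{Y}$, and integrate the linear inequality over the ball to reach a contradiction. The explicit projection-based choice of the separating hyperplane and the non-strict inequality on the $\mathcal{Y}$ side are harmless variations of the paper's argument.
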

\begin{proof}
Fix any $x \in \R^{\dx}$ and $\mu > 0$. By assumption, $y^*(z) \in \Y$ for all $z \in \R^{\dx}$. Since $y^*$ is $L_y$-Lipschitz by \cref{ass:lipschitz}, it is continuous, and therefore the integral
\begin{equation}\label{eq:ymuintegral}
    y_\mu(x) = \E_{u \sim \U(\ball^{\dx})}[y^*(x + \mu u)] = \frac{1}{\vol(\ball^{\dx})} \int_{\ball^{\dx}} y^*(x + \mu u) \, du
\end{equation}
is well-defined and finite (since it is an integral of a continuous function over a compact set).

Suppose for contradiction that $y_\mu(x) \notin \Y$. Since $\Y$ is closed and convex, and $\{y_\mu(x)\}$ (a singleton) is a closed, convex, and bounded (therefore compact and convex) set disjoint from $\Y$, the strict hyperplane separation theorem guarantees the existence of $b \in \R^{\dy}$ and $\alpha \in \R$ such that:
\begin{equation} \label{eq:hyperplaneseparationymu}
\langle b, y_\mu(x) \rangle > \alpha \quad \text{and} \quad \langle b, y \rangle < \alpha \quad \forall y \in \Y.
\end{equation}
Using \eqref{eq:ymuintegral}, we express the left-hand side of the first inequality in \eqref{eq:hyperplaneseparationymu} as
\begin{equation*}
    \langle b, y_\mu(x) \rangle = \left\langle b, \frac{1}{\vol(\ball^{\dx})} \int_{\ball^{\dx}} y^*(x + \mu u) \, du \right\rangle = \frac{1}{\vol(\ball^{\dx})} \int_{\ball^{\dx}} \langle b, y^*(x + \mu u) \rangle \, du.\\
\end{equation*}
Since $y^*(z) \in \Y$ for all $z \in \R^{\dx}$, thus $y^*(x + \mu u) \in \Y$ for all $u \in \ball^{\dx}$, the second inequality in \eqref{eq:hyperplaneseparationymu} implies $\langle b, y^*(x + \mu u) \rangle < \alpha$ for all $u \in \ball^{\dx}$. Therefore, we can bound the integral:
\begin{equation*}
    \frac{1}{\vol(\ball^{\dx})}\int_{\ball^{\dx}} \langle b, y^*(x + \mu u) \rangle \, du \leq \frac{1}{\vol(\ball^{\dx})}\int_{\ball^{\dx}} \alpha \, du = \frac{1}{\vol(\ball^{\dx})} \alpha \vol(\ball^{\dx}) = \alpha.
\end{equation*}
This implies $\langle b, y_\mu(x) \rangle \leq \alpha$, which contradicts $\langle b, y_\mu(x) \rangle > \alpha$. Therefore, we conclude that $y_\mu(x) \in \Y$.
\end{proof}
 
Equipped with \cref{rem:ymuiny}, we can generalize \cref{thm:smoothing-gradient} to vector-valued and constrained mappings, and provide an explicit expression for the Jacobian of the smoothed mapping.
\begin{lemma}[Jacobian of the smoothed response]\label{lem:ysmooth-differentiable-lipschitz}
Let $y^*: \R^{\dx} \to \Y$ be the unique response of the followers for given $x\in\mb{R}^{\dx}$ and suppose that $y^*$ is $L_y$-Lipschitz continuous, where $\Y \subseteq \R^{\dy}$ is closed and convex. For a given scalar $\mu > 0$, define $y_\mu: \R^{\dx} \to \R^{\dy}$ by $y_\mu(x) = \E_{u \sim \U(\ball^{\dx})}[y^*(x + \mu u)]$ (see \cref{eq:ysmoothvector}).
Then $y_\mu$ is continuously differentiable over $\R^{\dx}$. Moreover, for any $x \in \R^{\dx}$, the Jacobian matrix $\jac y_\mu(x) \in \R^{\dy \times \dx}$ is given by 
\begin{equation}\label{eq:jacobian-ymu}
    \jac y_\mu(x) = \frac{\dx}{\mu} \E_{v \sim \U(\sphere^{\dx})}\Bigl[y^*(x + \mu v)\,v^\top\Bigr],
\end{equation} 
where $\sphere^{\dx} = \{v \in \R^{\dx} : \norm{v} = 1\}$ denotes the unit sphere.
\end{lemma}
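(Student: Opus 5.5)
The plan is to reduce \cref{lem:ysmooth-differentiable-lipschitz} componentwise to the scalar result \cref{thm:smoothing-gradient}. For each $i\in\{1,\dots,\dy\}$, the component $y^*_i:\R^{\dx}\to\R$ is continuous, since $|y^*_i(x)-y^*_i(x')|\le\norm{y^*(x)-y^*(x')}\le L_y\norm{x-x'}$. By \eqref{eq:ysmoothvector}, the $i$-th component of $y_\mu$ is exactly the scalar smoothing $(y^*_i)_\mu(x)=\E_{u\sim\U(\ball^{\dx})}[y^*_i(x+\mu u)]$. Applying \cref{thm:smoothing-gradient} to $h=y^*_i$ shows that each $(y_\mu)_i$ is continuously differentiable, with
\[
\grad (y_\mu)_i(x)=\frac{\dx}{\mu}\E_{v\sim\U(\sphere^{\dx})}[y^*_i(x+\mu v)\,v].
\]

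Next I would assemble the Jacobian. A map $\R^{\dx}\to\R^{\dy}$ is (Fréchet) differentiable, respectively continuously differentiable, if and only if each component is. Moreover, its Jacobian is the matrix whose $i$-th row is $\grad(y_\mu)_i(x)^\top$. Stacking these rows gives
\[
\jac y_\mu(x)=\frac{\dx}{\mu}\E_{v}\bigl[(y^*_1(x+\mu v)v^\top;\dots;y^*_{\dy}(x+\mu v)v^\top)\bigr]=\frac{\dx}{\mu}\E_v[y^*(x+\mu v)v^\top].
\]
Here expectation is taken entrywise and linearity is used; every entry is integrable because $y^*$ is continuous, so it is bounded on the compact sphere $x+\mu\sphere^{\dx}$. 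Continuity of $\jac y_\mu$ follows from continuity of each row. \Cref{rem:ymuiny} is only needed to note that $y_\mu$ is well defined, with values in $\Y$. The closedness and convexity of $\Y$ play no role in differentiability.

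I expect the main obstacle to be deciding whether to trust \cref{thm:smoothing-gradient} as a black box or to re-derive the scalar formula. If a self-contained argument is wanted, I would follow Flaxman et al.: write $h_\mu(x)=\frac{1}{\mu^{\dx}\vol(\ball^{\dx})}\int_{\ball(x,\mu)}h(z)\,dz$ and differentiate the moving domain via the divergence theorem (Reynolds transport). This yields a surface integral $\frac{1}{\vol(\ball^{\dx})\mu}\int_{\sphere^{\dx}}h(x+\mu v)v\,dS(v)$. The identity $\mathrm{area}(\sphere^{\dx})=\dx\vol(\ball^{\dx})$ then gives the factor $\dx/\mu$. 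Continuity of the resulting expression in $x$ follows from dominated convergence, using that $h$ is bounded on compact sets. Since the paper already cites this scalar result under only continuity of $h$, I would invoke it directly and keep the proof to the componentwise reduction.
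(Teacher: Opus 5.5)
Your proposal is correct and follows the paper's proof: apply \cref{thm:smoothing-gradient} to each component $y^*_i$, then stack the resulting gradients as rows to get $\jac y_\mu(x)=\frac{\dx}{\mu}\E_v[y^*(x+\mu v)v^\top]$. The only difference is that the paper also checks continuity of the partial derivatives by hand, showing from the Lipschitz continuity of $y^*$ that each entry is $\frac{\dx L_y}{\mu}$-Lipschitz, whereas you take continuity directly from the cited scalar theorem, which already asserts it.
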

\begin{proof}
Let $y^*_i(x)$, for $i \in \{1, \ldots, \dy\}$, denote the $i$-th component of the vector $y^*(x) \in \Y \subseteq \R^{\dy}$, and let $[y_\mu]_i$ denote the $i$-th component of $y_\mu$. We first recall that since $\Y$ is closed and convex, $y_\mu(x) \in \Y$ by \cref{rem:ymuiny}. %

We establish continuous differentiability of $y_\mu$ in two steps. First, we show that all partial derivatives $\frac{\partial [y_\mu]_i}{\partial x_j}(x)$ for $i \in \{1, \ldots, \dy\}$ and $j \in \{1, \ldots, \dx\}$ exist and derive explicit expressions for them. Second, we verify that all partial derivatives are continuous. Together, these two steps establish that $y_\mu$ is continuously 
differentiable, with Jacobian $\jac y_\mu(x)$ given by~\eqref{eq:jacobian-ymu}.

\medskip
\noindent\textbf{Step 1: Existence of partial derivatives.}
Since $y^*(x): \R^{\dx} \to \Y$ is $L_y$-Lipschitz continuous, each component $y^*_i$ is also $L_y$-Lipschitz continuous for all $x_1, x_2 \in \R^{\dx}$,
\begin{equation*}
    |y^*_i(x_1) - y^*_i(x_2)| \leq \norm{y^*(x_1) - y^*(x_2)} \leq L_y \norm{x_1 - x_2}.
\end{equation*}
Since $[y_\mu(x)]_i = \E_{u \sim \U(\ball^{\dx})}[y^*_i(x + \mu u)]$, we can apply \cref{thm:smoothing-gradient} to each component $[y_\mu]_i$ of $y_\mu$ to conclude that $[y_\mu]_i$ is differentiable over $\R^{\dx}$, with gradient
\begin{equation*}
    \grad_x [y_\mu]_i(x) = \frac{\dx}{\mu} \E_{v \sim \U(\sphere^{\dx})}[y^*_i(x + \mu v)\,v].    
\end{equation*}
The $j$-th component of $\grad_x [y_\mu]_i(x)$ is the $j$-th partial derivative of $[y_\mu]_i$:
\begin{equation}\label{eq:smoothpartial}
    \frac{\partial [y_\mu]_i}{\partial x_j}(x) = \frac{\dx}{\mu} \E_{v \sim \U(\sphere^{\dx})}[y^*_i(x + \mu v)\,v_j],
\end{equation}
where $v_j$ denotes the $j$-th component of the vector $v \in \sphere^{\dx}$.  

We observe that 
\begin{equation*}
    \E_{v \sim \U(\sphere^{\dx})}[y^*_i(x + \mu v)v] = \int_{\sphere^{\dx}} y^*_i(x + \mu v) \, v \, p(v) \, dv,
\end{equation*}
where $p(v)$ is the probability density function of the uniform distribution over the unit sphere $\sphere^{\dx}$. Since $y^*_i$ is continuous and $\sphere^{\dx}$ is compact, the expectation in \eqref{eq:smoothpartial} is well-defined, and the partial derivative in \eqref{eq:smoothpartial} exists.

The Jacobian matrix $\jac y_\mu(x)$ is obtained by collecting all partial derivatives $\frac{\partial [y_\mu]_i}{\partial x_j}(x)$ and has the form:
\begin{equation*}
    \jac y_\mu(x) = \begin{pmatrix} \frac{\partial [y_\mu]_1}{\partial x_1}(x) & \cdots & \frac{\partial [y_\mu]_1}{\partial x_{\dx}}(x) \\ \vdots & \ddots & \vdots \\ \frac{\partial [y_\mu]_{\dy}}{\partial x_1}(x) & \cdots & \frac{\partial [y_\mu]_{\dy}}{\partial x_{\dx}}(x) \end{pmatrix},
\end{equation*} 
with each entry $[\jac y_\mu(x)]_{ij}$ given by \eqref{eq:smoothpartial}. 

Utilizing the identity $y^*_i(x + \mu v) \cdot v_j = [y^*(x + \mu v) \, v^\top]_{ij}$, we obtain the compact representation
\begin{equation*}
    \jac y_\mu(x) = \frac{\dx}{\mu} \E_{v \sim \U(\sphere^{\dx})}\Bigl[y^*(x + \mu v)\,v^\top\Bigr],
\end{equation*}
where the expectation is understood component-wise.

\medskip
\noindent\textbf{Step 2: Continuity of partial derivatives.}
The existence of partial derivatives and the Jacobian alone does not guarantee continuous differentiability. We therefore verify continuity of all partial derivatives.

Fix $i \in \{1, \ldots, \dy\}$ and $j \in \{1, \ldots, \dx\}$. For any $x, z \in \R^{\dx}$:
\begin{align*}
    \left|\frac{\partial [y_\mu]_i}{\partial x_j}(x) - \frac{\partial [y_\mu]_i}{\partial x_j}(z)\right| 
    & =
    \left|\frac{\dx}{\mu} \E_{v \sim \U(\sphere^{\dx})}\left[y^*_i(x + \mu v) v_j\right] - \frac{\dx}{\mu} \E_{v \sim \U(\sphere^{\dx})}\left[y^*_i(z + \mu v) v_j\right]\right| \\
    &= \left|\frac{\dx}{\mu} \E_{v \sim \U(\sphere^{\dx})}\left[(y^*_i(x + \mu v) - y^*_i(z + \mu v)) v_j\right]\right| \\
    &\leq \frac{\dx}{\mu} \E_{v \sim \U(\sphere^{\dx})}\left[|y^*_i(x + \mu v) - y^*_i(z + \mu v)| \cdot |v_j|\right] \\
    &\leq \frac{\dx}{\mu} \E_{v \sim \U(\sphere^{\dx})}\left[|y^*_i(x + \mu v) - y^*_i(z + \mu v)|\right],
\end{align*}
where the last inequality uses $|v_j| \leq \norm{v} = 1$ for $v \in \sphere^{\dx}$. By the Lipschitz continuity of $y^*_i$:
\begin{equation*}
    |y^*_i(x + \mu v) - y^*_i(z + \mu v)| \leq L_y \norm{(x + \mu v) - (z + \mu v)} = L_y \norm{x - z}, 
\end{equation*}
and therefore
\begin{equation*}
    \left|\frac{\partial [y_\mu]_i}{\partial x_j}(x) - \frac{\partial [y_\mu]_i}{\partial x_j}(z)\right| \leq \frac{\dx L_y}{\mu} \norm{x - z}.
\end{equation*}
For any $\varepsilon > 0$, choosing $\delta = \frac{\varepsilon \mu}{\dx L_y}$ ensures that for any $x, z$ with $\norm{x - z} < \delta$,
\begin{equation*}
    \left|\frac{\partial [y_\mu]_i}{\partial x_j}(x) - \frac{\partial [y_\mu]_i}{\partial x_j}(z)\right| < \varepsilon.
\end{equation*}
This establishes uniform continuity, and therefore continuity, of $\frac{\partial [y_\mu]_i}{\partial x_j}$ at every point $x \in \R^{\dx}$. Since all partial derivatives exist everywhere and are continuous, $y_\mu: \R^{\dx} \to \Y$ is continuously differentiable. 
\end{proof}

The following lemma summarizes key properties of the smoothed equilibrium response $y_\mu(x)$. These results extend known smoothing properties for scalar functions to the multivariate setting $\R^{\dx} \to \R^{\dy}$.

\begin{lemma}[Properties of the smoothed equilibrium response]\label{lem:properties-smoothing}
Let $y^*: \R^{\dx} \to \Y$ be the unique followers' response for given $x\in\mb{R}^{\dx}$ and suppose that $y^*$ is $L_y$-Lipschitz continuous, where $\Y \subseteq \R^{\dy}$ is closed and convex. Let $y_\mu(x)$ be defined as in \eqref{eq:ysmooth}, and let $\jac y_\mu(x) = \frac{\dx}{\mu} \E_{v \sim \U(\sphere^{\dx})} \Bigl[y^*(x + \mu v) \,v^\top\Bigr]$. Then, for any $x, z \in \R^{\dx}$:
\begin{enumerate}[label=(\roman*)]
    \item $\norm{y_\mu(x) - y^*(x)} \leq L_y \mu$.
    \item $\norm{y_\mu(x) - y_\mu(z)} \leq L_y \norm{x - z}$.
    \item $\norm{\jac y_\mu(x) - \jac y_\mu(z)} \leq \frac{k_1 L_y \sqrt{\dx}}{\mu} \norm{x - z}$, and $k_1>0$ is a constant.
\end{enumerate}
\end{lemma}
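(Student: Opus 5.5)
Parts (i) and (ii) follow directly from the Lipschitz continuity of $y^*$ together with Jensen's inequality for the norm; part (iii) needs a separate and more delicate argument.

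\textbf{Part (i).} Write
\[
y_\mu(x)-y^*(x)=\E_{u\sim\U(\ball^{\dx})}\bigl[y^*(x+\mu u)-y^*(x)\bigr].
\]
By convexity of the norm,
\[
\norm{y_\mu(x)-y^*(x)}\le \E\bigl[\norm{y^*(x+\mu u)-y^*(x)}\bigr]\le L_y\mu\,\E\norm{u}\le L_y\mu,
\]
using $\norm{u}\le 1$.

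\textbf{Part (ii).} Similarly,
\[
\norm{y_\mu(x)-y_\mu(z)}\le \E\bigl[\norm{y^*(x+\mu u)-y^*(z+\mu u)}\bigr]\le L_y\norm{x-z}.
\]
The vector-valued expectation causes no difficulty, since by \cref{lem:ysmooth-differentiable-lipschitz} all integrands are continuous on compact sets.

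\textbf{Part (iii).} This is the main step, because the crude bound from Step~2 of \cref{lem:ysmooth-differentiable-lipschitz} gives a factor $\dx$ (entrywise), not $\sqrt{\dx}$. Using \eqref{eq:jacobian-ymu},
\[
\jac y_\mu(x)-\jac y_\mu(z)=\frac{\dx}{\mu}\E_v\bigl[D(v)\,v^\top\bigr],\qquad D(v)\doteq y^*(x+\mu v)-y^*(z+\mu v),
\]
where $\norm{D(v)}\le L_y\norm{x-z}$ for every $v$. The plan is to bound the operator norm by duality. For unit vectors $a\in\R^{\dy}$ and $b\in\R^{\dx}$,
\[
a^\top\E[D(v)v^\top]b=\E\bigl[\langle a,D(v)\rangle\langle v,b\rangle\bigr]\le L_y\norm{x-z}\,\E|\langle v,b\rangle|.
\]
By Cauchy--Schwarz and rotational symmetry of the uniform distribution on the sphere,
\[
\E|\langle v,b\rangle|\le\sqrt{\E\langle v,b\rangle^2}=1/\sqrt{\dx}.
\]
Hence the operator norm is at most $\frac{\dx}{\mu}\cdot\frac{L_y}{\sqrt{\dx}}\norm{x-z}=\frac{\sqrt{\dx}L_y}{\mu}\norm{x-z}$, which gives the claim with $k_1=1$.

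If $\norm{\cdot}$ is meant as the Frobenius norm, this argument does not go through as is, because the rank structure is lost. In that case I would still bound $\norm{\E[Dv^\top]}_F$ via the same duality with a matrix test direction $A$, $\norm{A}_F=1$: write $\langle A,Dv^\top\rangle=\langle D,Av\rangle$. However, $\E\norm{Av}\le\sqrt{\E\norm{Av}^2}=\norm{A}_F/\sqrt{\dx}$, so the same $1/\sqrt{\dx}$ gain survives and the bound again holds with constant $k_1=1$.

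The only point to verify carefully is the identity $\E[vv^\top]=I/\dx$ for $v\sim\U(\sphere^{\dx})$, which follows from symmetry and $\operatorname{tr}(vv^\top)=1$. A constant $k_1$, possibly larger than $1$, also absorbs any alternative concentration-type route, for instance one citing \cite{shamir2017boundestimator}.
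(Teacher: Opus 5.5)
Your proof is correct. Parts (i) and (ii) are the same as the paper's.

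For (iii), both proofs reduce the matrix norm to scalar test directions; your $a$ plays the role of the paper's unit vector $w$. They differ at the key step. The paper identifies $\jac y_\mu(x)^\top w$ with $\grad h_{w,\mu}(x)$ for the smoothed scalar function $h_w=w^\top y^*$. It then imports the gradient-Lipschitz constant $k_1L_y\sqrt{\dx}/\mu$ from \cite[Lem.~8]{yousefian2012randomizedsmoothingproperties}. You instead derive that constant directly from the sphere representation, using only $\norm{D(v)}\le L_y\norm{x-z}$ and $\E[vv^\top]=I/\dx$.

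Your route is self-contained and skips the identification step. It also gives the explicit value $k_1=1$, which would make $L_F$ in \cref{lem:descent-Fsmooth} fully explicit. Your Frobenius variant is correct too. Since $\norm{\cdot}_2\le\norm{\cdot}_F$, it already implies the spectral bound the paper actually uses, so your remark that the argument ``does not go through as is'' is unnecessary caution. The paper's route gains only the link to a standard scalar smoothing result, and pays for it with an unspecified constant.
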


\begin{proof}
\textbf{Part (i):} For any $x \in \R^{\dx}$, by the definition of $y_\mu(x)$ and the Lipschitz continuity of $y^*$:
\begin{align*}
    \norm{y_\mu(x) - y^*(x)} &= \left\|\E_{u \sim \U(\ball^{\dx})}[y^*(x + \mu u)] - y^*(x)\right\| \\
    &= \left\|\E_{u \sim \U(\ball^{\dx})}[y^*(x + \mu u) - y^*(x)]\right\| \\
    &\leq \E_{u \sim \U(\ball^{\dx})}[\norm{y^*(x + \mu u) - y^*(x)}] \quad \text{(Jensen's inequality)} \\
    &\leq \E_{u \sim \U(\ball^{\dx})}[L_y \norm{\mu u}] \quad \text{(Lipschitz continuity of } y^*\text{)} \\
    &= L_y \mu \E_{u \sim \U(\ball^{\dx})}[\norm{u}] \\
    &= L_y \mu \cdot \frac{\dx}{\dx + 1} \quad \text{(see \cite[Exercise 0.8]{vershynin2018highdimensionalprobability})}\\
    &\leq L_y \mu.
\end{align*}

\textbf{Part (ii):} For any $x, z \in \R^{\dx}$:
\begin{align*}
    \norm{y_\mu(x) - y_\mu(z)} &= \left\|\E_{u \sim \U(\ball^{\dx})}[y^*(x + \mu u)] - \E_{u \sim \U(\ball^{\dx})}[y^*(z + \mu u)]\right\| \\
    &= \left\|\E_{u \sim \U(\ball^{\dx})}[y^*(x + \mu u) - y^*(z + \mu u)]\right\| \\
    &\leq \E_{u \sim \U(\ball^{\dx})}[\norm{y^*(x + \mu u) - y^*(z + \mu u)}] \quad \text{(Jensen's inequality)} \\
    &\leq \E_{u \sim \U(\ball^{\dx})}[L_y \norm{x - z}] \quad \text{(Lipschitz property of } y^*\text{)} \\
    &= L_y \norm{x - z}.
\end{align*}

\textbf{Part (iii):} By definition of the spectral norm, for any matrix $A\in\R^{\dy\times \dx}$,
\begin{equation*}\label{eq:spectral_var}
    \|A\|_2 = \|A^\top\|_2 = \sup_{w\in\R^{\dy},\|w\|=1} \|A^\top w\|.
\end{equation*}
Therefore, it suffices to show that for every unit vector $w\in\R^{\dy}$ with $\|w\|=1$,
\begin{equation*}\label{eq:goal}
    \bigl\|\bigl[\jac y_\mu(x)-\jac y_\mu(z)\bigr]^\top w\bigr\|
    \;\le\;k_1\,\frac{L_y\sqrt{\dx}}{\mu}\,\|x-z\|.
\end{equation*}
Fix such a $w$, and define the scalar function $h_w:\R^{\dx}\to\R$ by
\begin{equation}\label{eq:hw_def}
    h_w(\cdot) \;=\; w^\top y^*(\cdot).
\end{equation}
Since $y^*$ is $L_y$-Lipschitz and $\|w\|=1$, we have for all $x,z\in\R^{\dx}$:
\begin{equation}\label{eq:hw_lip}
    |h_w(x)-h_w(z)| \;=\; |w^\top(y^*(x)-y^*(z))| \;\le\; \|w\|\,\|y^*(x)-y^*(z)\| \;\le\; L_y\|x-z\|,
\end{equation}
where the first inequality follows by Cauchy-Schwarz, and the second inequality follows as $\|w\|=1$ and $y^*(\cdot)$ is $L_y$-Lipschitz. Hence $h_w$ is $L_y$-Lipschitz. Then, applying uniform smoothing to $h_w$, by \cref{thm:smoothing-gradient} the function
\begin{equation*}
    \begin{split}
    h_{w,\mu}(x) &= \E_{u \sim \U(\ball^{\dx})}[h_w(x + \mu u)] = \E_{u \sim \U(\ball^{\dx})}[w^\top y^*(x + \mu u)] \\
    &= w^\top \E_{u \sim \U(\ball^{\dx})}[y^*(x + \mu u)] = w^\top y_\mu(x).
    \end{split}
\end{equation*} is continuously differentiable, with gradient 
\begin{equation}\label{eq:grad_hw_sphere}
    \grad h_{w,\mu}(x) = \frac{\dx}{\mu} \E_{v \sim \U(\sphere^{\dx})}[h_w(x + \mu v)\, v].
\end{equation}
Further, by \cite[Lem. 8]{yousefian2012randomizedsmoothingproperties}, $\grad h_{w,\mu}(x)$ is $k_1 L_y \sqrt{\dx}/\mu$-Lipschitz, where $k_1 > 0$ is a constant. 

We now show that $\grad h_{w,\mu}(x) = \jac y_\mu(x)^\top w$. Substituting $h_w(\cdot) = w^\top y^*(\cdot) = \sum_{i=1}^{\dy} w_i\, y^*_i(\cdot)$ into~\eqref{eq:grad_hw_sphere} and exchanging the sum with the expectation:
\begin{align}
    \grad h_{w,\mu}(x)
    &= \frac{\dx}{\mu} \E_{v \sim \U(\sphere^{\dx})}\!\left[\sum_{i=1}^{\dy} w_i\, y^*_i(x + \mu v)\, v\right] \notag \\
    &= \sum_{i=1}^{\dy} w_i \frac{\dx}{\mu} \E_{v \sim \U(\sphere^{\dx})}[y^*_i(x + \mu v)\, v] \notag \\
    &= \sum_{i=1}^{\dy} w_i\, \grad_x [y_\mu]_i(x) \;=\; \jac y_\mu(x)^\top w, \label{eq:grad_hw_jac}
\end{align}
where $\frac{\dx}{\mu} \E_{v \sim \U(\sphere^{\dx})}[y^*_i(x + \mu v)\, v] = \grad_x [y_\mu]_i(x)$ follows by \cref{lem:ysmooth-differentiable-lipschitz} and the last equality uses $[\jac y_\mu(x)^\top w]_j = \sum_{i=1}^{\dy} w_i\,\frac{\partial [y_\mu]_i}{\partial x_j}(x) = \sum_{i=1}^{\dy} w_i\,[\grad_x [y_\mu]_i(x)]_j$.

Therefore, by~\eqref{eq:grad_hw_jac}:
\begin{equation*}
    \bigl\|\bigl[\jac y_\mu(x)-\jac y_\mu(z)\bigr]^\top w\bigr\| = \|\grad h_{w,\mu}(x) - \grad h_{w,\mu}(z)\| \leq \frac{k_1 L_y \sqrt{\dx}}{\mu} \norm{x - z}.
\end{equation*}
Since this holds for every unit vector $w \in \R^{\dy}$,
\begin{equation*}
    \bigl\|\jac y_\mu(x)-\jac y_\mu(z)\bigr\|_2 = \sup_{\|w\|=1} \bigl\|\bigl[\jac y_\mu(x)-\jac y_\mu(z)\bigr]^\top w\bigr\| \le k_1\,\frac{L_y\sqrt{d_x}}{\mu}\,\|x-z\|.
\end{equation*}
This completes the proof.
\end{proof}

\begin{lemma}\label{lem:jacobiannorm}
    Assume that $y_\mu(x)$ is differentiable and $L_y$-Lipschitz. Then $\|\jac y_\mu(x)\|_2\leq L_y$.
\end{lemma}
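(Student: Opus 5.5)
We argue directly from the definition of the derivative together with the Lipschitz property, via the variational characterization of the spectral norm. Fix $x\in\R^{\dx}$. The spectral norm satisfies $\|\jac y_\mu(x)\|_2=\sup_{\|h\|=1}\|\jac y_\mu(x)h\|$, so it suffices to show $\|\jac y_\mu(x)h\|\le L_y$ for every unit vector $h\in\R^{\dx}$.

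Fix a unit direction $h$. Since $y_\mu$ is differentiable at $x$ (which holds by \cref{lem:ysmooth-differentiable-lipschitz}), the directional derivative satisfies
\begin{equation*}
\jac y_\mu(x)h=\lim_{s\to 0^+}\frac{y_\mu(x+sh)-y_\mu(x)}{s}.
\end{equation*}
By the Lipschitz property, which is \cref{lem:properties-smoothing}(ii), each difference quotient has norm at most $L_y\|sh\|/s=L_y$.

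The norm is continuous, so it passes through the limit, giving $\|\jac y_\mu(x)h\|\le L_y$. Taking the supremum over unit vectors $h$ then yields $\|\jac y_\mu(x)\|_2\le L_y$.

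There is essentially no obstacle here. The only point needing care is that the limit defining the directional derivative exists and equals $\jac y_\mu(x)h$. This is guaranteed by (Fréchet) differentiability, which \cref{lem:ysmooth-differentiable-lipschitz} provides since $y_\mu$ is $C^1$.
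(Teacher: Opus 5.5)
Your proof is correct and follows the same route as the paper. You write $\jac y_\mu(x)h$ as the limit of difference quotients, bound each quotient by $L_y$ using the Lipschitz property, pass the norm through the limit, and take the supremum over unit directions; the only cosmetic difference is that you take the one-sided limit $s\to 0^+$, whereas the paper takes the two-sided limit with $|t|$ in the denominator.
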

\begin{proof}
As $y_\mu$ is differentiable by \cref{lem:ysmooth-differentiable-lipschitz}, the directional derivative \(D_v y_\mu(x) = \lim_{t \to 0} \frac{y_\mu(x + tv) - y_\mu(x)}{t}\) of $y_\mu$ in direction $v$ at $x$ exists and equals $\jac y_\mu(x) v$. It follows that
\begin{align*}
\|\jac y_\mu(x) v\| &= \left\|\lim_{t \to 0} \frac{y_\mu(x + tv) - y_\mu(x)}{t}\right\| \\
&= \lim_{t \to 0} \left\|\frac{y_\mu(x + tv) - y_\mu(x)}{t}\right\| \\
&\leq \lim_{t \to 0} \frac{L_y\|tv\|}{|t|} \quad \text{( } y_\mu \text{ is } L_y \text{-Lipschitz by \cref{lem:properties-smoothing}\,(ii))}\\
&= L_y\|v\|,
\end{align*}
where the second equality follows from the continuity of the norm $\norm{\cdot}$ and the existence of the limit $\lim_{t \to 0} \frac{y_\mu(x + tv) - y_\mu(x)}{t}=\jac y_\mu(x)v$.

Then, for arbitrary unit vectors $v$ with $\|v\|=1$, $\|\jac y_\mu(x) v\| \leq L_y$, and by definition of the spectral norm:
    \begin{equation*}
        \|\jac y_\mu(x)\|_2 = \sup_{v \in \R^{\dx},v\neq 0} \frac{\norm{\jac y_\mu(x) v}_2}{\norm{v}_2}= \sup_{v\in\R^{\dx},\norm{v}_2 = 1} \norm{\jac y_\mu(x) v}_2 \leq L_y. \qedhere
    \end{equation*}
\end{proof}
\subsection{Smoothness of surrogate function and estimator properties}\label{sec:surrogate-smoothness}
We now analyze the surrogate function $F_\mu(x) = f(x, y_\mu(x))$, with $y_\mu(x)$ defined in \eqref{eq:ysmooth}. We first establish that the distance between $F_\mu(x)$ and $F(x)$ is bounded by a constant depending on $\mu$ (\cref{pro:Fsmoothbound}), then establish that $F_\mu$ is differentiable and smooth under \cref{ass:lipschitz} (\cref{lem:descent-Fsmooth}). %

\begin{proposition}\label{pro:Fsmoothbound}
Consider the problem in \eqref{eq:setting} under \cref{ass:lipschitz}. Let $F(x) = f(x, y^*(x))$ and $F_\mu(x) = f(x, y_\mu(x))$, where $y^*: \R^{\dx} \to \Y$ be the unique response of the followers for given $x\in\mb{R}^{\dx}$ and $y_\mu$ is defined as in \eqref{eq:ysmooth}. Then,
\begin{equation*}
    |F_\mu(x) - F(x)| \leq L_f L_y \mu.
\end{equation*}
\end{proposition}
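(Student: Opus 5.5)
The plan is a direct one-line Lipschitz argument: the $x$-argument of $f$ is the same in $F_\mu(x)$ and $F(x)$, so only the $y$-argument differs, and this difference is controlled by \cref{lem:properties-smoothing}\,(i). Fix $x\in\R^{\dx}$ and write
\[
|F_\mu(x)-F(x)| = \bigl|f(x,y_\mu(x)) - f(x,y^*(x))\bigr|.
\]

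First, use \cref{ass:lipschitz}\,(1): $f$ is $L_f$-Lipschitz jointly in $(x,y)$. The two arguments $(x,y_\mu(x))$ and $(x,y^*(x))$ differ only in the $y$-block, so their Euclidean distance is exactly $\|y_\mu(x)-y^*(x)\|$. This gives
\[
|F_\mu(x)-F(x)| \le L_f\,\|y_\mu(x)-y^*(x)\|.
\]
If one prefers to use only the partial-derivative bound $\|\grad_y f\|\le L_f$, the same inequality follows from the mean value theorem along the segment from $y^*(x)$ to $y_\mu(x)$ with $x$ held fixed. The segment needs no feasibility, since $f$ is defined on all of $\R^{\dx}\times\R^{\dy}$; in any case $y_\mu(x)\in\Y$ by \cref{rem:ymuiny}.

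Second, invoke \cref{lem:properties-smoothing}\,(i), which gives $\|y_\mu(x)-y^*(x)\|\le L_y\mu$. Combining the two steps yields $|F_\mu(x)-F(x)|\le L_fL_y\mu$. Since $x$ was arbitrary, the bound holds uniformly.

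There is no real obstacle here. The only point to state explicitly is that the first argument of $f$ is identical in both terms, so no $x$-perturbation contributes. This is what distinguishes the bound from the black-box surrogate $\tilde F_\mu$, whose error would also pick up an $L_f\mu$ contribution from smoothing in $x$.
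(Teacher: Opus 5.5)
Your proposal is correct and matches the paper's proof: both fix $x$, use the $L_f$-Lipschitz continuity of $f$ in $y$ to get $|F_\mu(x)-F(x)|\le L_f\|y_\mu(x)-y^*(x)\|$, and then apply \cref{lem:properties-smoothing}\,(i) to obtain the bound $L_fL_y\mu$.
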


\begin{proof}
We have $F(x) = f(x, y^*(x))$ and $F_\mu(x) = f(x, y_\mu(x))$, so \( |F_\mu(x) - F(x)| = |f(x, y_\mu(x)) - f(x, y^*(x))| \). By \cref{ass:lipschitz}, $f(x, y)$ is $L_f$-Lipschitz in $y$ for fixed $x$, so \( |f(x, y_1) - f(x, y_2)| \leq L_f \norm{y_1 - y_2} \). From \cref{lem:properties-smoothing}(i), $\norm{y_\mu(x) - y^*(x)} \leq L_y \mu$, and thus 
\begin{equation*}
    |F_\mu(x) - F(x)| \leq L_f \norm{y_\mu(x) - y^*(x)} \leq L_f L_y \mu. \qedhere
\end{equation*}
\end{proof}

The following lemma establishes that $F_\mu$ is smooth under our assumptions. Unlike the smoothness of $\tilde{F}_\mu(x, y^*(x)) = \E_{u \sim \U(\ball^{\dx})}[f(x + \mu u, y^*(x + \mu u))]$, which follows directly from the smoothing properties in \cref{thm:smoothing-gradient}, the smoothness of $F_\mu$ does not follow immediately since $F_\mu$ does not smooth $f(x, y^*(x))$ directly. Instead, the proof exploits the smoothness of $f$ in both $x$ and $y$, together with the smoothness of $y_\mu$.

\begin{lemma}\label{lem:descent-Fsmooth}
    Under \Cref{ass:lipschitz}, $F_\mu(x) = f(x, y_\mu(x))$ is differentiable and $L_F$-smooth, with $L_F = L_g(1 + L_y)^2+\frac{k_1 L_f L_y \sqrt{\dx}}{\mu}$, and $k_1>0$ is a constant.
\end{lemma}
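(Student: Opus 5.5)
We prove \cref{lem:descent-Fsmooth} by computing $\grad F_\mu$ with the classical chain rule and bounding its variation term by term. Since $f$ is differentiable with continuous gradient (\cref{ass:lipschitz}) and $y_\mu$ is continuously differentiable (\cref{lem:ysmooth-differentiable-lipschitz}), the map $x\mapsto(x,y_\mu(x))$ is $C^1$. Therefore $F_\mu$ is differentiable, with
\[
\grad F_\mu(x) = \grad_x f(x,y_\mu(x)) + \jac y_\mu(x)^\top \grad_y f(x,y_\mu(x)).
\]

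For $x,z\in\R^{\dx}$, write $p_x=(x,y_\mu(x))$ and $p_z=(z,y_\mu(z))$. We first bound the distance between these points. By \cref{lem:properties-smoothing}(ii),
\[
\norm{p_x-p_z}\le \norm{x-z}+\norm{y_\mu(x)-y_\mu(z)}\le (1+L_y)\norm{x-z}.
\]
Since $\grad f$ is $L_g$-Lipschitz, each partial gradient $\grad_x f$ and $\grad_y f$ changes by at most $L_g(1+L_y)\norm{x-z}$ between $p_x$ and $p_z$.

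Next we split the difference of the second term by adding and subtracting $\jac y_\mu(x)^\top\grad_y f(p_z)$:
\[
\jac y_\mu(x)^\top\bigl(\grad_y f(p_x)-\grad_y f(p_z)\bigr) + \bigl(\jac y_\mu(x)-\jac y_\mu(z)\bigr)^\top\grad_y f(p_z).
\]
\begin{itemize}
\item The first piece is at most $L_y\cdot L_g(1+L_y)\norm{x-z}$. Here we use $\norm{\jac y_\mu(x)}_2\le L_y$ from \cref{lem:jacobiannorm}.
\item The second piece is at most $\frac{k_1L_y\sqrt{\dx}}{\mu}\norm{x-z}\cdot L_f$. Here we use \cref{lem:properties-smoothing}(iii) and $\norm{\grad_y f}\le L_f$ from \cref{ass:lipschitz}.
\end{itemize}

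Adding the $\grad_x f$ contribution $L_g(1+L_y)\norm{x-z}$ to these two pieces gives
\[
\norm{\grad F_\mu(x)-\grad F_\mu(z)}\le \Bigl[L_g(1+L_y)(1+L_y)+\tfrac{k_1L_fL_y\sqrt{\dx}}{\mu}\Bigr]\norm{x-z}.
\]
This is exactly $L_F\norm{x-z}$.

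The only delicate point is keeping the constant tight. Bounding $\grad_x f$ and $\grad_y f$ separately by the full $\grad f$ Lipschitz constant yields $L_g(1+L_y)+L_yL_g(1+L_y)=L_g(1+L_y)^2$, which matches the statement. A sharper joint bound would also work but is not needed. The remaining step is to make sure the spectral-norm bounds from \cref{lem:jacobiannorm} and \cref{lem:properties-smoothing}(iii) are applied to the transposed matrices. This is valid because $\norm{A^\top}_2=\norm{A}_2$.
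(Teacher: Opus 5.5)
Your proposal is correct and follows essentially the same route as the paper's proof. Both use the chain rule, add and subtract $\jac y_\mu(x)^\top\grad_y f(p_z)$, and apply the same three bounds (\cref{lem:properties-smoothing}(ii)--(iii), \cref{lem:jacobiannorm}, and $\norm{\grad_y f}\le L_f$), which yields exactly $L_F = L_g(1+L_y)^2 + k_1 L_f L_y\sqrt{\dx}/\mu$.
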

\begin{proof}[Proof of \cref{lem:descent-Fsmooth}]\label{app:proof_descent-Fsmooth}
As $f$ is differentiable by \cref{ass:lipschitz} and $y_\mu$ is differentiable by \cref{lem:ysmooth-differentiable-lipschitz}, the composition $F_\mu(x) = f(x, y_\mu(x))$ is differentiable. By the chain rule:
\begin{equation}\label{eq:Fsmooth-chainrule}
\grad F_\mu(x) = \grad_x f(x, y_\mu(x)) + \jac y_\mu(x)^\top \grad_y f(x, y_\mu(x)).
\end{equation}

To show that $F_\mu$ is $L_F$-smooth, we need to show that for any $x_1, x_2 \in \R^{\dx}$:
\[\norm{\grad F_\mu(x_1) - \grad F_\mu(x_2)} \leq L_F \norm{x_1 - x_2}.\]

Using \eqref{eq:Fsmooth-chainrule}, we have
\begin{align*}
&\norm{\grad F_\mu(x_1) - \grad F_\mu(x_2)} \\
&= \norm{\grad_x f(x_1, y_\mu(x_1)) + \jac y_\mu(x_1)^\top \grad_y f(x_1, y_\mu(x_1))  - \grad_x f(x_2, y_\mu(x_2)) - \jac y_\mu(x_2)^\top \grad_y f(x_2, y_\mu(x_2))} \\
&\leq \norm{\grad_x f(x_1, y_\mu(x_1)) - \grad_x f(x_2, y_\mu(x_2))} + \norm{\jac y_\mu(x_1)^\top \grad_y f(x_1, y_\mu(x_1)) - \jac y_\mu(x_2)^\top \grad_y f(x_2, y_\mu(x_2))}.
\end{align*}

\noindent For the first term, since $\grad f$ is $L_g$-Lipschitz by \cref{ass:lipschitz}:
\begin{align}
\norm{\grad_x f(x_1, y_\mu(x_1)) - \grad_x f(x_2, y_\mu(x_2))} &\leq L_g \norm{(x_1, y_\mu(x_1)) - (x_2, y_\mu(x_2))} \nonumber \\
&\leq L_g (\norm{x_1 - x_2} + \norm{y_\mu(x_1) - y_\mu(x_2)}) \nonumber \\
&\leq L_g (1 + L_y) \norm{x_1 - x_2}, \label{eq:normxy}
\end{align}
where we use that $y_\mu$ is $L_y$-Lipschitz by \cref{lem:properties-smoothing}(ii). 

\smallskip
\noindent For the second term, we add and subtract $\jac y_\mu(x_1)^\top \grad_y f(x_2, y_\mu(x_2))$:
\begin{align*}
&\norm{\jac y_\mu(x_1)^\top \grad_y f(x_1, y_\mu(x_1)) - \jac y_\mu(x_2)^\top \grad_y f(x_2, y_\mu(x_2))} \\
&= \norm{\jac y_\mu(x_1)^\top \grad_y f(x_1, y_\mu(x_1)) - \jac y_\mu(x_1)^\top \grad_y f(x_2, y_\mu(x_2)) \\
&\quad\quad+ \jac y_\mu(x_1)^\top \grad_y f(x_2, y_\mu(x_2)) - \jac y_\mu(x_2)^\top \grad_y f(x_2, y_\mu(x_2))} \\
&\leq \norm{\jac y_\mu(x_1)^\top [\grad_y f(x_1, y_\mu(x_1)) - \grad_y f(x_2, y_\mu(x_2))]} + \norm{[\jac y_\mu(x_1) - \jac y_\mu(x_2)]^\top \grad_y f(x_2, y_\mu(x_2))}.
\end{align*}

Bounding the first sub-term:
\begin{align*}
    \norm{\jac y_\mu(x_1)^\top [\grad_y f(x_1, y_\mu(x_1)) - \grad_y f(x_2, y_\mu(x_2))]}
    &\leq \norm{\jac y_\mu(x_1)}_2 \norm{\grad_y f(x_1, y_\mu(x_1)) - \grad_y f(x_2, y_\mu(x_2))} \\
    &\leq L_y \cdot L_g \norm{(x_1, y_\mu(x_1)) - (x_2, y_\mu(x_2))} \quad \text{(by \cref{lem:jacobiannorm})} \\
    &\leq L_y L_g (1 + L_y) \norm{x_1 - x_2} \quad \text{(by \eqref{eq:normxy})}.
\end{align*}

Bounding the second sub-term:
\begin{align*}
    \norm{[\jac y_\mu(x_1) - \jac y_\mu(x_2)]^\top \grad_y f(x_2, y_\mu(x_2))} 
    &\leq \norm{\jac y_\mu(x_1) - \jac y_\mu(x_2)}_2 \norm{\grad_y f(x_2, y_\mu(x_2))} \\
    &\leq \frac{k_1 L_y \sqrt{\dx}}{\mu} \norm{x_1 - x_2} \cdot L_f \quad \text{(by \cref{lem:properties-smoothing}(iii) and \cref{ass:lipschitz})}.
\end{align*}

\noindent Then, combining all bounds,
\begin{align*}
    \|\grad F_\mu(x_1) - \grad F_\mu(x_2)\| &\leq L_g(1+L_y)\norm{x_1-x_2} + L_y L_g (1+L_y) \norm{x_1-x_2} + \frac{k_1 L_f L_y \sqrt{\dx}}{\mu} \norm{x_1-x_2} \\
    &= \left[L_g(1 + L_y)^2 + \frac{k_1 L_f L_y \sqrt{\dx}}{\mu}\right] \norm{x_1-x_2}
\end{align*}

This establishes that $F_\mu$ has Lipschitz gradient with constant $L_F = L_g(1 + L_y)^2+\frac{k_1 L_f L_y \sqrt{\dx}}{\mu}$.
\end{proof}

\subsubsection{Properties of the Zeroth-Order Estimators}\label{sec:properties_estimator}
We now turn to the analysis of \cref{alg:partial_zero_order}. 

\begin{lemma}\label{lem:jacobianunbiasedestimator}
Consider \cref{alg:partial_zero_order} under \cref{ass:lipschitz}. Then $H_t = \frac{\dx}{2\mu}(y^*(x_t + \mu v_t) - y^*(x_t - \mu v_t)) \, v_t^\top$, where $v_t \sim \U(\sphere^{\dx})$, is an unbiased zeroth-order estimator of $\jac y_\mu(x_t)$, i.e.
\begin{equation*}
    \E[\,H_t \mid x_t\,] = \jac y_\mu(x_t),
\end{equation*}
and $\jac y_\mu(x_t)= \frac{\dx}{\mu} \E_{v_t \sim \U(\sphere^{\dx})}\Bigl[y^*(x_t + \mu v_t) \,v_t^\top\Bigr]$.
\end{lemma}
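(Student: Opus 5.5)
The plan is to reduce the claim to the Jacobian formula of \cref{lem:ysmooth-differentiable-lipschitz} by a symmetry argument on the sphere. Conditioning on $x_t$ fixes the point. Since $v_t$ is drawn independently of the past, the conditional expectation is just the expectation over $v_t \sim \U(\sphere^{\dx})$ with $x_t$ treated as deterministic. By linearity of expectation, applied entrywise to the matrix,
\[
\E[H_t \mid x_t] = \frac{\dx}{2\mu}\Bigl(\E_{v}\bigl[y^*(x_t+\mu v)\,v^\top\bigr] - \E_{v}\bigl[y^*(x_t-\mu v)\,v^\top\bigr]\Bigr).
\]
Both expectations are well defined: $y^*$ is continuous by \cref{ass:lipschitz}, the sphere is compact, and so each entry $y^*_i(x_t\pm\mu v)v_j$ is bounded.

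Next, handle the second term with the change of variables $w=-v$. The uniform distribution on $\sphere^{\dx}$ is invariant under $v\mapsto -v$, so $w\sim\U(\sphere^{\dx})$. This gives
\[
\E_v\bigl[y^*(x_t-\mu v)v^\top\bigr] = \E_w\bigl[y^*(x_t+\mu w)(-w)^\top\bigr] = -\E_w\bigl[y^*(x_t+\mu w)w^\top\bigr].
\]
Substituting back, the two terms add rather than cancel:
\[
\E[H_t\mid x_t] = \frac{\dx}{2\mu}\cdot 2\,\E_v\bigl[y^*(x_t+\mu v)v^\top\bigr] = \frac{\dx}{\mu}\E_v\bigl[y^*(x_t+\mu v)v^\top\bigr].
\]

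Finally, by \cref{lem:ysmooth-differentiable-lipschitz} (eq.~\eqref{eq:jacobian-ymu}) the right-hand side equals $\jac y_\mu(x_t)$. That identification is also the second claimed formula.

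I do not expect a real obstacle. The only point needing care is justifying the conditioning: $v_t$ must be independent of $x_t$, which holds because $x_t$ depends only on $v_0,\dots,v_{t-1}$. One could also note the alternative route of subtracting the zero-mean term $\E[y^*(x_t)v^\top]=y^*(x_t)\E[v]^\top=0$, but the antipodal symmetry argument above suffices.
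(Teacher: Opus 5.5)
Your proof is correct and follows essentially the same route as the paper: linearity of the conditional expectation, the antipodal symmetry ($v_t$ and $-v_t$ have the same distribution on $\sphere^{\dx}$) to turn the difference into $\frac{\dx}{\mu}\E[y^*(x_t+\mu v_t)\,v_t^\top]$, and then identification with $\jac y_\mu(x_t)$ via \cref{lem:ysmooth-differentiable-lipschitz}. Your explicit remark that $v_t$ is independent of $x_t$ (which depends only on $v_0,\dots,v_{t-1}$) justifies the conditioning step slightly more carefully than the paper does.
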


\begin{proof}
Given $x_t$, the randomness in $H_t$ arises solely from $v_t \sim \U(\sphere^{\dx})$, as $y^*(x_t+\mu v_t)$ is uniquely determined for any given $x_t+\mu v_t$:
\begin{align*}
\E[H_t \mid x_t] &= \E_{v_t \sim \U(\sphere^{\dx})}\left[\frac{\dx}{2\mu}(y^*(x_t + \mu v_t) - y^*(x_t - \mu v_t)) \, v_t^\top\right]\\
&= \frac{\dx}{2\mu} \E_{v_t \sim \U(\sphere^{\dx})}\Bigl[(y^*(x_t + \mu v_t) - y^*(x_t - \mu v_t)) \, v_t^\top\Bigr].
\end{align*}
Observe that for $v_t \sim \U(\sphere^{\dx})$, the distribution is symmetric, i.e. $v_t$ and $-v_t$ have the same distribution. Therefore, $\E_{v_t \sim \U(\sphere^{\dx})}\Bigr[y^*(x_t + \mu (-v_t)) \, (-v_t^\top)\Bigr] = \E_{v_t \sim \U(\sphere^{\dx})}\Bigl[y^*(x_t + \mu v_t) \, v_t^\top\Bigr]$ and it follows
\begin{align*}
\E[H_t | x_t] &= \frac{\dx}{2\mu} \E_{v_t \sim \U(\sphere^{\dx})}\Bigl[\bigl(y^*(x_t + \mu v_t) - y^*(x_t - \mu v_t)\bigr) \, v_t^\top\Bigr]\\
&= \frac{\dx}{2\mu} \Bigl(\E_{v_t \sim \U(\sphere^{\dx})}[y^*(x_t + \mu v_t) \, v_t^\top] - \E_{v_t \sim \U(\sphere^{\dx})}[y^*(x_t - \mu v_t) \, v_t^\top]\Bigr)\\
&= \frac{\dx}{2\mu} \Bigl(\E_{v_t \sim \U(\sphere^{\dx})}[y^*(x_t + \mu v_t) \, v_t^\top] + \E_{v_t \sim \U(\sphere^{\dx})}[y^*(x_t + \mu (-v_t)) \, (-v_t^\top)]\Bigr)\\
&= \frac{\dx}{2\mu} \Bigl(\E_{v_t \sim \U(\sphere^{\dx})}[y^*(x_t + \mu v_t) \, v_t^\top] + \E_{v_t \sim \U(\sphere^{\dx})}[y^*(x_t + \mu v_t) \, v_t^\top]\Bigr)\\
&= \frac{\dx}{\mu} \E_{v_t \sim \U(\sphere^{\dx})}\Bigl[y^*(x_t + \mu v_t) \, v_t^\top\Bigr].
\end{align*}

By \cref{lem:ysmooth-differentiable-lipschitz},
\(
    \jac y_\mu (x_t) = \frac{\dx}{\mu} \E_{v_t \sim \U(\sphere^{\dx})}\Bigl[y^*(x_t + \mu v_t) \, v_t^\top\Bigr] 
\), hence
\(
\E[\,H_t \mid x_t\,] = \jac y_\mu(x_t)
\) follows. \qedhere
\end{proof}

Despite the unbiasedness of $H_t$, the estimator $g_t$ of \algpzos is \emph{not} an unbiased estimator of the gradient of $F_\mu$. We recall by \eqref{eq:Fsmooth-chainrule} that $\grad F_\mu$ is given by
\begin{equation*}
    \grad F_\mu(x_t) = \grad_x f(x_t, y_\mu(x_t)) + \jac y_\mu(x_t)^\top \grad_y f(x_t, y_\mu(x_t)).
\end{equation*}
In contrast, the expected value of the estimator $g_t$ of \algpzos for given $x_t$ is given by 
\begin{align}\label{eq:gradientestimate-expectedvalue}
    \E[\,g_t \mid x_t\,] &= \E[\,\grad_x f(x_t, y^*(x_t)) + H_t^\top \grad_y f(x_t, y^*(x_t)) \mid x_t\,] \nonumber \\
    &= \grad_x f(x_t, y^*(x_t)) + \E[\,H_t \mid x_t\,]^\top \grad_y f(x_t, y^*(x_t)) \nonumber \\
    &= \grad_x f(x_t, y^*(x_t)) + \jac y_\mu(x_t)^\top \grad_y f(x_t, y^*(x_t)),
\end{align}
where we use $\E[H_t \mid x_t] = \jac y_\mu(x_t)$ by \cref{lem:jacobianunbiasedestimator}. Therefore, $g_t$ is a \emph{biased} estimator of $\grad F_\mu(x_t)$, with
\begin{align}\label{eq:diffggradf}
    & \E[g_t \mid x_t] - \grad F_\mu(x_t) \notag\\
    &= \grad_x f(x_t, y^*(x_t)) - \grad_x f(x_t, y_\mu(x_t)) + \jac y_\mu(x_t)^\top [\grad_y f(x_t, y^*(x_t)) - \grad_y f(x_t, y_\mu(x_t))].
\end{align}

Below, we bound the distance of $\E[g_t | x_t]$ to $\grad F_\mu(x_t)$ by a constant that depends on $\mu$.

\begin{lemma}\label{lem:difference-estimate-gradfsmooth}
Let $\{g_t\}_{t=0}^{T-1}$ and $\{x_t\}_{t=0}^{T-1}$ be generated by \cref{alg:partial_zero_order}. Under \cref{ass:lipschitz},
    \[ \norm{\E[g_t | x_t] - \grad F_\mu(x_t)} \leq L_g(1 + L_y)L_y\mu. \]
\end{lemma}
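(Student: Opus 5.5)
The plan is to start from the explicit bias expression \eqref{eq:diffggradf}. By \cref{lem:jacobianunbiasedestimator}, $\E[g_t\mid x_t]$ equals \eqref{eq:gradientestimate-expectedvalue}, and $\grad F_\mu(x_t)$ is given by the chain rule \eqref{eq:Fsmooth-chainrule}. So the difference splits into two terms:
\[
\underbrace{\grad_x f(x_t, y^*(x_t)) - \grad_x f(x_t, y_\mu(x_t))}_{(A)} \;+\; \underbrace{\jac y_\mu(x_t)^\top\bigl[\grad_y f(x_t, y^*(x_t)) - \grad_y f(x_t, y_\mu(x_t))\bigr]}_{(B)}.
\]
I will apply the triangle inequality and bound $\norm{(A)}$ and $\norm{(B)}$ separately.

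For $(A)$, note that $\grad_x f$ is a block of $\grad f$. Since $\grad f$ is $L_g$-Lipschitz by \cref{ass:lipschitz}, $\grad_x f$ is also $L_g$-Lipschitz in the joint argument. The two evaluation points share the same $x_t$, so
\[
\norm{(A)} \le L_g \norm{y^*(x_t) - y_\mu(x_t)} \le L_g L_y \mu,
\]
where the last step is \cref{lem:properties-smoothing}(i).

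For $(B)$, I first use the operator-norm bound $\norm{\jac y_\mu(x_t)^\top w} \le \norm{\jac y_\mu(x_t)}_2\norm{w}$. Then \cref{lem:jacobiannorm} gives $\norm{\jac y_\mu(x_t)}_2 \le L_y$, which is applicable because $y_\mu$ is differentiable by \cref{lem:ysmooth-differentiable-lipschitz} and $L_y$-Lipschitz by \cref{lem:properties-smoothing}(ii). Finally I bound the gradient difference exactly as in $(A)$, using the Lipschitz continuity of $\grad_y f$ together with \cref{lem:properties-smoothing}(i). This yields
\[
\norm{(B)} \le L_y \cdot L_g L_y \mu.
\]

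Adding the two bounds gives $L_g L_y \mu(1 + L_y)$, which is the claimed constant. There is no real obstacle in this argument. The only point needing care is the justification that each partial gradient inherits the Lipschitz constant $L_g$ of the full gradient: the norm of a sub-block of a vector is at most the norm of the whole vector.
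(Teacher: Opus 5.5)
Your proposal is correct and matches the paper's proof essentially step for step: you split \eqref{eq:diffggradf} into the $\grad_x f$ term and the $\jac y_\mu(x_t)^\top \grad_y f$ term, and bound them by $L_g L_y\mu$ and $L_g L_y^2\mu$ via \cref{lem:properties-smoothing}(i) and \cref{lem:jacobiannorm}. Your explicit checks that the hypotheses of \cref{lem:jacobiannorm} hold and that each partial gradient inherits the $L_g$-Lipschitz constant are small but welcome additions of rigor.
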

\begin{proof}
By \eqref{eq:diffggradf}, 
\begin{align*}
    &\norm{\E[g_t \mid x_t] - \grad F_\mu(x_t)} \\
    &= \norm{\grad_x f(x_t, y^*(x_t)) - \grad_x f(x_t, y_\mu(x_t)) + \jac y_\mu(x_t)^\top [\grad_y f(x_t, y^*(x_t)) - \grad_y f(x_t, y_\mu(x_t))]} \\
    &\leq \norm{\grad_x f(x_t, y^*(x_t)) - \grad_x f(x_t, y_\mu(x_t))} + \norm{\jac y_\mu(x_t)^\top [\grad_y f(x_t, y^*(x_t)) - \grad_y f(x_t, y_\mu(x_t))]}.
\end{align*}

For the first term, since $\grad f$ is $L_g$-Lipschitz and $\norm{y^*(x_t) - y_\mu(x_t)} \leq L_y \mu$ by \cref{lem:properties-smoothing}(i):
\begin{align*}
    \norm{\grad_x f(x_t, y^*(x_t)) - \grad_x f(x_t, y_\mu(x_t))} &\leq L_g \norm{(x_t, y^*(x_t)) - (x_t, y_\mu(x_t))} \\
    &= L_g \norm{y^*(x_t) - y_\mu(x_t)} \\
    &\leq L_g L_y \mu.
\end{align*}

For the second term, using $\norm{Ax}_2\leq \norm{A}_2\norm{x}_2$ and $\norm{\jac y_\mu(x_t)}_2 \leq L_y$ by \cref{lem:jacobiannorm}:
\begin{align*}
\norm{\jac y_\mu(x_t)^\top [\grad_y f(x_t, y^*(x_t)) - \grad_y f(x_t, y_\mu(x_t))]} &\leq \norm{\jac y_\mu(x_t)} \norm{\grad_y f(x_t, y^*(x_t)) - \grad_y f(x_t, y_\mu(x_t))} \\
&\leq L_y \cdot L_g \norm{y^*(x_t) - y_\mu(x_t)} \\
&\leq L_y \cdot L_g L_y \mu = L_g L_y^2 \mu.
\end{align*}

Combining both terms,
\begin{equation*}
\norm{\E[g_t \mid x_t] - \grad F_\mu(x_t)} \leq L_g L_y \mu + L_g L_y^2 \mu = L_g (1 + L_y) L_y\mu. \qedhere
\end{equation*}
\end{proof}

We now establish a bound on the second moment of the estimator $g_t$.
\begin{lemma}\label{lem:estimatorsquared}
Let $\{g_t\}_{t=0}^{T-1}$ and $\{x_t\}_{t=0}^{T-1}$ be generated by \cref{alg:partial_zero_order}. Under \cref{ass:lipschitz}, 
\begin{equation*}
    \E[\norm{g_t}^2\mid x_t] \leq k_2 \dx L_f^2 L_y^2 + L_f^2(1+2L_y).
\end{equation*}
\end{lemma}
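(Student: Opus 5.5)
The proof expands the square of $g_t = a_t + H_t^\top b_t$, where $a_t \doteq \grad_x f(x_t,y^*(x_t))$ and $b_t \doteq \grad_y f(x_t,y^*(x_t))$. Given $x_t$, both $a_t$ and $b_t$ are deterministic, and \cref{ass:lipschitz} gives $\norm{a_t},\norm{b_t}\leq L_f$. Then
\[
\norm{g_t}^2 = \norm{a_t}^2 + 2\inner{a_t}{H_t^\top b_t} + \norm{H_t^\top b_t}^2 ,
\]
and we bound the conditional expectation of each term separately. The first term is at most $L_f^2$.

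\textbf{Cross term.} Take the expectation inside, since $a_t$ and $b_t$ are fixed. By \cref{lem:jacobianunbiasedestimator}, $\E[H_t\mid x_t] = \jac y_\mu(x_t)$. Hence
\[
2\E[\inner{a_t}{H_t^\top b_t}\mid x_t] = 2\inner{a_t}{\jac y_\mu(x_t)^\top b_t} \leq 2\norm{a_t}\,\norm{\jac y_\mu(x_t)}_2\,\norm{b_t} \leq 2L_f^2 L_y ,
\]
where the last step uses \cref{lem:jacobiannorm}. This is what yields the $2L_y$ factor in the constant $L_f^2(1+2L_y)$.

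\textbf{Quadratic term (main obstacle).} Write $h(z)\doteq \inner{b_t}{y^*(z)}$, a scalar function. Since $H_t^\top b_t = \frac{\dx}{2\mu}\bigl(h(x_t+\mu v_t)-h(x_t-\mu v_t)\bigr)v_t$, we get
\[
\norm{H_t^\top b_t}^2 = \frac{\dx^2}{4\mu^2}\bigl(h(x_t+\mu v_t)-h(x_t-\mu v_t)\bigr)^2 ,
\]
using $\norm{v_t}=1$. The function $h$ is $L_fL_y$-Lipschitz because $\norm{b_t}\leq L_f$ and $y^*$ is $L_y$-Lipschitz. A naive Lipschitz bound gives $\dx^2L_f^2L_y^2$, which is off by a factor $\dx$. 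To get linear dependence on $\dx$ we invoke \cite[Lem.~10]{shamir2017boundestimator}: for an $L$-Lipschitz $h$ and $v\sim\U(\sphere^{\dx})$,
\[
\frac{\dx^2}{4\mu^2}\E\bigl[(h(x+\mu v)-h(x-\mu v))^2\bigr] \leq k_2\dx L^2 .
\]
This rests on concentration of Lipschitz functions on the sphere, via $\E[(h(x+\mu v)-\E h)^2]\lesssim \mu^2L^2/\dx$. With $L = L_fL_y$ the quadratic term is at most $k_2\dx L_f^2L_y^2$. The subtle point is that $b_t$ depends only on $x_t$ and not on $v_t$, so conditioning on $x_t$ legitimately fixes $h$ before averaging over $v_t$.

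Summing the three bounds gives $\E[\norm{g_t}^2\mid x_t]\leq k_2\dx L_f^2L_y^2 + L_f^2(1+2L_y)$, as claimed.
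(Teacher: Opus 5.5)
Your proposal is correct and follows the paper's proof essentially step for step. It uses the same three-term expansion, bounds the cross term via unbiasedness of $H_t$ (\cref{lem:jacobianunbiasedestimator}) together with $\norm{\jac y_\mu(x_t)}_2\le L_y$ (\cref{lem:jacobiannorm}), and rewrites the quadratic term as the two-point estimator of the $L_fL_y$-Lipschitz scalar function $z\mapsto \grad_y f(x_t,y^*(x_t))^\top y^*(z)$, which is then bounded by \cite[Lem.~10]{shamir2017boundestimator}.
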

\begin{proof}
Recall that the estimator of \cref{alg:partial_zero_order} is
\[
    g_t = \grad_x f(x_t, y^*(x_t)) + H_t^\top \grad_y f(x_t, y^*(x_t)),
\]
where $H_t = \frac{d_x}{2\mu}(y^*(x_t+\mu v_t)-y^*(x_t-\mu v_t))\,v_t^\top$ with $v_t \sim \mathcal{U}(\mathbb{S}^{\dx})$. Conditioned on $x_t$, the terms $\grad_x f(x_t, y^*(x_t))$ and $\grad_y f(x_t, y^*(x_t))$ are deterministic; only $H_t$ (through $v_t$) is random. Expanding the squared norm:
\begin{align}
    \E[\|g_t\|^2 \mid x_t]
    &= \E\bigl[\|\grad_x f(x_t, y^*(x_t)) + H_t^\top \grad_y f(x_t, y^*(x_t))\|^2 \;\big|\; x_t\bigr] \notag \\
    &= \|\grad_x f(x_t, y^*(x_t))\|^2 \label{eq:term1} \\
    &\quad + 2\bigl\langle \grad_x f(x_t, y^*(x_t)),\; \E[H_t^\top \grad_y f(x_t, y^*(x_t)) \mid x_t]\bigr\rangle \label{eq:term2} \\
    &\quad + \E\bigl[\|H_t^\top \grad_y f(x_t, y^*(x_t))\|^2 \;\big|\; x_t\bigr]. \label{eq:term3}
\end{align}
We bound each term.

\medskip
\noindent\textbf{Term~\eqref{eq:term1}} By \Cref{ass:lipschitz}, $\|\grad_x f(x_t, y^*(x_t))\| \leq L_f$, so
\begin{equation}\label{eq:term1_bound}
    \|\grad_x f(x_t, y^*(x_t))\|^2 \leq L_f^2.
\end{equation}

\smallskip
\noindent\textbf{Term~\eqref{eq:term2}}
By the Cauchy-Schwarz inequality $\langle a, b \rangle \leq |\langle a, b \rangle| \leq \|a\| \cdot \|b\|$. Further, $\|\grad_x f(x_t, y^*(x_t))\|, \|\grad_y f(x_t, y^*(x_t))\| \leq L_f$ (by \cref{ass:lipschitz}), $\|\E[H_t \mid x_t]\|_2 = \|\jac y_\mu(x_t)\|_2$ (by \cref{lem:jacobianunbiasedestimator}), and $\|\jac y_\mu(x_t)\|_2 \leq L_y$ (by \cref{lem:jacobiannorm}). We have:
\begin{align}
    2\bigl\langle \grad_x f(x_t, y^*(x_t)),\; \E[H_t^\top \grad_y f(x_t, y^*(x_t)) \mid x_t]\bigr\rangle &\leq 2\|\grad_x f(x_t, y^*(x_t))\|\,\|\E[H_t^\top \grad_y f(x_t, y^*(x_t)) \mid x_t]\| \nonumber\\
    &\leq 2L_f \cdot \|\E[H_t \mid x_t]^\top \grad_y f(x_t, y^*(x_t)) \| \nonumber\\
    &\leq 2L_f \cdot \|\E[H_t \mid x_t]\|_2 \cdot \|\grad_y f(x_t, y^*(x_t))\| \nonumber\\
    &\leq 2L_f \cdot L_y L_f = 2L_f^2 L_y. \label{eq:term2_bound}
\end{align}

\smallskip
\noindent\textbf{Term~\eqref{eq:term3}}
Fix $\grad_y f(x_t, y^*(x_t))$ and define the scalar function $h_x(z) = \grad_y f(x_t, y^*(x_t))^\top y^*(z)$. Since $y^*$ is $L_y$-Lipschitz and $\|\grad_y f(x_t, y^*(x_t))\| \leq L_f$ by \cref{ass:lipschitz}:
\begin{align*}
    |h_x(z) - h_x(z')| &= |\grad_y f(x_t, y^*(x_t))^\top(y^*(z) - y^*(z'))| \\
        &\leq \|\grad_y f(x_t, y^*(x_t))\|\,\|y^*(z) - y^*(z')\| \\
        &\leq L_f L_y\|z - z'\|,
\end{align*}
so $h_x$ is $(L_fL_y)$-Lipschitz. Using $H_t = \frac{d_x}{2\mu}(y^*(x_t+\mu v_t) - y^*(x_t-\mu v_t))\,v_t^\top$ and factoring out the scalar:
\begin{align*}
    H_t^\top \grad_y f(x_t, y^*(x_t))
    &= \frac{\dx}{2\mu}\bigl[\grad_y f(x_t, y^*(x_t))^\top\bigl(y^*(x_t+\mu v_t) - y^*(x_t-\mu v_t)\bigr)\bigr]v_t \\
    &= \frac{\dx}{2\mu}\bigl[h_x(x_t+\mu v_t) - h_x(x_t-\mu v_t)\bigr]v_t,
\end{align*}
which is the standard two-point gradient estimator 
\begin{equation*}
    \frac{\dx}{2\mu}\bigl[h_x(x_t+\mu v_t) - h_x(x_t-\mu v_t)\bigr]v_t
\end{equation*}
for the scalar Lipschitz function $h_x$ at $x_t$ with smoothing parameter $\mu$ and random direction $v_t \sim \U(\sphere^{\dx})$. By \cite[Lem. 10]{shamir2017boundestimator} and \cite[Appendix D.1]{lin2022goldstein}, for such a two-point estimator of an $L$-Lipschitz function $h:\mb{R}^{\dx} \to \mb{R}$, we have
\begin{equation*}
    \E\!\left[\left\|\frac{\dx}{2\mu}(h(x+\mu v) - h(x-\mu v))v\right\|^2\right] \leq k_2\dx L^2,
\end{equation*}
where $k_2$ is a constant. Substituting $L = L_f L_y$ and $h=h_x$ gives
\begin{equation}\label{eq:term3_bound}
    \E\bigl[\|H_t^\top \grad_y f(x_t, y^*(x_t))\|^2 \;\big|\; x_t\bigr] \leq k_2\dx L_f^2 L_y^2.
\end{equation}

\medskip
Substituting~\eqref{eq:term1_bound}, \eqref{eq:term2_bound}, and~\eqref{eq:term3_bound} into~\eqref{eq:term1}--\eqref{eq:term3}:
\[
    \E[\|g_t\|^2 \mid x_t] \leq k_2\dx L_f^2 L_y^2 + L_f^2 + 2L_f^2 L_y = k_2\dx L_f^2 L_y^2 + L_f^2(1 + 2L_y). \qedhere
\]
\end{proof}

\medskip
Last but not least, we compare the second moment bounds for $g_t$ with those for the standard two-point zeroth-order estimator $\tilde{g}_t$ used in \cref{alg:zero_order}.
\begin{lemma}[Restatement of Lemma \ref{lem:variancecomparison}]\label{lem:variancecomparisonrestatement}
Under \cref{ass:lipschitz} and $v_t$ sampled uniformly from the unit sphere $\sphere^{\dx}$, let 
\begin{align*}
  H_t &= \frac{\dx}{2\mu}(y^*(x_t + \mu v_t) - y^*(x_t - \mu v_t))\,v_t^\top \\
  g_t &= \grad_x f(x_t, y^*(x_t)) + H_t^\top \grad_y f(x_t, y^*(x_t))
\end{align*}
be the estimator used in \cref{alg:partial_zero_order}, and let
\[
  \tilde{g}_t = \frac{\dx}{2\mu}\bigl[f(x_t + \mu v_t, y^*(x_t + \mu v_t)) - f(x_t - \mu v_t, y^*(x_t - \mu v_t))\bigr]v_t
\]
be the standard two-point zeroth-order gradient estimator of \cref{alg:zero_order}. Then the conditional second moment bounds satisfy:
\begin{align}
    \E[\norm{g_t}^2 \mid x_t] &\leq k_2\dx L_f^2 L_y^2 + L_f^2(1+2L_y), \label{eq:secondmoment_gt_stmt} \\
    \E[\norm{\tilde{g}_t}^2 \mid x_t] &\leq k_2\dx L_f^2 L_y^2 + 
    k_2\dx L_f^2(1+2L_y), \label{eq:secondmoment_at_stmt}
\end{align}
where $k_2 \geq 1$ is the universal constant from \cite[Lem.~10]{shamir2017boundestimator}. We obtain the same bound for the conditional variances:
\begin{align}
    \E[\norm{g_t - \E[g_t \mid x_t]}^2 \mid x_t] &\leq k_2\dx L_f^2 L_y^2 + L_f^2(1 + 2L_y), \label{eq:var_gp} \\
    \E[\norm{\tilde{g}_t - \E[\tilde{g}_t \mid x_t]}^2 \mid x_t] &\leq  k_2\dx L_f^2 L_y^2 + 
    k_2\dx L_f^2(1+2L_y), \label{eq:var_gzo}
\end{align}
Since $k_2 \geq 1$ and $\dx \geq 1$, the bounds for $g_t$ are at least as tight as those for $\tilde{g}_t$. 
\end{lemma}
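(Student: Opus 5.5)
The restated lemma has four claims, and I would prove them in that order.

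\textbf{Bound \eqref{eq:secondmoment_gt_stmt} for $g_t$.} This is exactly \cref{lem:estimatorsquared}, so I simply invoke it.

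\textbf{Bound \eqref{eq:secondmoment_at_stmt} for $\tilde g_t$.} Here I would mirror the three-term decomposition of \cref{lem:estimatorsquared}, but at the level of function differences. Conditioned on $x_t$, write $w^\pm = x_t \pm \mu v_t$ and split
\[
f(w^+, y^*(w^+)) - f(w^-, y^*(w^-)) = A + B,
\]
where
\[
A = f(w^+, y^*(w^+)) - f(w^-, y^*(w^+)), \qquad B = f(w^-, y^*(w^+)) - f(w^-, y^*(w^-)).
\]
The term $A$ is a change in the $x$-argument and $B$ is a change through $y^*$. I then expand
\[
\|\tilde g_t\|^2 = \frac{\dx^2}{4\mu^2}\,(A+B)^2 = \frac{\dx^2}{4\mu^2}\,(A^2 + 2AB + B^2).
\]
I bound each piece using the scalar two-point estimator bound of \cite[Lem.~10]{shamir2017boundestimator}, $\E[\|\tfrac{\dx}{2\mu}(h(x+\mu v)-h(x-\mu v))v\|^2] \le k_2 \dx L^2$ for $L$-Lipschitz $h$.
\begin{itemize}
\item For $B^2$: $h(z) = f(\cdot, y^*(z))$ is $L_fL_y$-Lipschitz in $z$, which gives $k_2\dx L_f^2L_y^2$.
\item For $A^2$: the $x$-dependence is $L_f$-Lipschitz, which gives $k_2\dx L_f^2$.
\item For the cross term: Cauchy--Schwarz together with the same concentration yields at most $2k_2\dx L_f^2 L_y$.
\end{itemize}
Summing gives \eqref{eq:secondmoment_at_stmt}.

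\textbf{Main obstacle.} The difficulty is that in $A$ and $B$ the "frozen" argument varies with $v_t$, so they are not literally two-point estimators of a fixed function. My first attempt is to use the Lipschitz property directly instead:
\[
|A| \le 2\mu L_f, \qquad |B| \le 2\mu L_f L_y,
\]
combined with $\E[(v^\top e)^2] = 1/\dx$-type concentration, or failing that, the cruder bound $k_2\dx$ absorbing the constants. Since the claim is only an upper bound with $k_2 \ge 1$, a crude estimate of the cross and $A$ terms by $k_2\dx L_f^2(1+2L_y)$ suffices. Alternatively, the whole bound $k_2\dx(L_f(1+L_y))^2 \le k_2\dx L_f^2L_y^2 + k_2\dx L_f^2(1+2L_y)$ follows directly by applying Shamir's lemma to $F$, which is $L_f(1+L_y)$-Lipschitz. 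That is the route I would actually take, since $(1+L_y)^2 = L_y^2 + 1 + 2L_y$ matches exactly.

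\textbf{Variance bounds \eqref{eq:var_gp}--\eqref{eq:var_gzo}.} These follow from $\E[\|X-\E X\|^2] = \E\|X\|^2 - \|\E X\|^2 \le \E\|X\|^2$, applied conditionally on $x_t$.

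\textbf{Comparison of the bounds.} Since $k_2 \ge 1$ and $\dx \ge 1$, we have $L_f^2(1+2L_y) \le k_2\dx L_f^2(1+2L_y)$, so the bounds for $g_t$ are no larger than those for $\tilde g_t$.
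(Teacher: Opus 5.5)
Your final route is exactly the paper's proof: \cref{lem:estimatorsquared} for $g_t$; Shamir's two-point bound applied to the $L_f(1+L_y)$-Lipschitz composite $F$, together with $(1+L_y)^2 = L_y^2 + 1 + 2L_y$, for $\tilde g_t$; and $\E[\norm{X-\E X}^2]\le \E[\norm{X}^2]$, conditionally on $x_t$, for the variances.

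\textbf{Missing step: $k_2\ge 1$.} You use $k_2\ge 1$ without justification, and your final comparison genuinely relies on it. For $L_f>0$ the comparison holds iff $k_2\dx\ge 1$, so at $\dx=1$ it is exactly $k_2\ge 1$. The paper supplies this by testing Shamir's bound on the linear function $h(x)=L e_1^\top x$. There the estimator equals $\dx L v_1 v$, so by rotational symmetry $\E[\norm{\tilde g}^2]=\dx^2L^2\,\E[v_1^2]=\dx L^2$. Hence any valid universal constant must satisfy $k_2\ge 1$. Alternatively, you could simply replace $k_2$ by $\max\{k_2,1\}$, since Shamir's bound remains valid with a larger constant.

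\textbf{The discarded $A+B$ detour.} You were right to drop it, but your fallback there is wrong. The crude estimate $|A+B|\le 2\mu L_f(1+L_y)$ only gives $\norm{\tilde g_t}^2\le \dx^2L_f^2(1+L_y)^2$. That cannot be absorbed into $k_2\dx(\cdot)$ with a universal constant. The saving of one factor of $\dx$ comes precisely from the concentration-of-measure argument inside Shamir's lemma. That argument applies only to symmetric two-point differences of a single fixed Lipschitz function, which is why applying it to $F$ directly is the correct move.
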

\begin{proof}
We first recall the relevant second moment bounds, then show that $k_2 \geq 1$, and finally derive the conditional variance comparison.
 
By \cref{lem:estimatorsquared},
\begin{equation}\label{eq:secondmoment_gp}
    \E[\norm{g_t}^2 \mid x_t] \leq k_2\dx L_f^2 L_y^2 + L_f^2(1+2L_y).
\end{equation}
For the estimator $\tilde{g}_t$, we apply a general result on two-point zeroth-order estimators. Specifically, \citet[Lem.~10]{shamir2017boundestimator} establishes that, for any $L$-Lipschitz function $h:\R^d \to \R$ and given $x\in\R^d$, the two-point estimator
\[
  \tilde{g} = \frac{d}{2\delta}\bigl[h(x + \delta v) - h(x - \delta v)\bigr]v, \qquad v \sim \U(\sphere^{d})
\]
satisfies $\E[\norm{\tilde{g}}^2 \mid x] \leq k_2\, d\, L^2$, where $k_2 > 0$ is a universal constant.

$\tilde{g}_t$ is precisely this two-point estimator applied to the composite function $F(x) = f(x,y^*(x))$, using $\delta=\mu$. By the triangle inequality,
\begin{equation*}
  \begin{split}
    |F(x) - F(z)| &= |f(x,y^*(x)) - f(z,y^*(z))| \\
    &\leq |f(x,y^*(x)) - f(z,y^*(x))| + |f(z,y^*(x)) - f(z,y^*(z))| \\
    &\leq L_f\norm{x-z} + L_f L_y \norm{x-z},
  \end{split}
\end{equation*}
so $F$ is $L_f(1+L_y)$-Lipschitz. Applying the above mentioned result from \cite[Lem.~10]{shamir2017boundestimator} with $L = L_f(1+L_y)$:
\begin{equation}\label{eq:secondmoment_gzo}
    \E[\norm{\tilde{g}_t}^2 \mid x_t] \leq k_2\dx L_f^2(1+L_y)^2 =  k_2\dx L_f^2 L_y^2 + 
    k_2\dx L_f^2(1+2L_y).
\end{equation}
 
It remains to show that $k_2 \geq 1$. To do so, we observe that the bound $\E[\norm{\tilde{g}_t}^2\mid x_t] \leq k_2 \dx L^2$ from \cite[Lem.~10]{shamir2017boundestimator} must hold for every $L$-Lipschitz function, and present a specific function that attains $\E[\norm{\tilde{g}_t}^2\mid x_t] = \dx L^2$, which forces $k_2 \geq 1$. For any $L > 0$, consider the linear function $h(x) = L e_1^\top x$, with $e_1=(1,0,\ldots,0)^\top$, which is exactly $L$-Lipschitz. The two-point estimator at any point $x$ evaluates to
\[
    \tilde{g} = \frac{\dx}{2\mu}\bigl[h(x + \mu v) - h(x - \mu v)\bigr]v = \frac{\dx}{2\mu}\bigl[2L\mu v_1\bigr]v = \dx L v_1 v,
\]
where $v = (v_1, \ldots, v_d)^\top \sim \U(\sphere^{\dx})$. Since $\norm{v} = 1$, $\E[\norm{\tilde{g}}^2\mid x] = \dx^2 L^2 \E[v_1^2]$. By rotational symmetry of the uniform distribution on $\sphere^{\dx}$, all components $v_i$ are identically distributed, so $\E[v_1^2] = \E[v_2^2] = \ldots = \E[v_d^2]$. Since $\sum_{i=1}^{\dx} v_i^2 = 1$ on the unit sphere, taking expectations gives $\dx \E[v_1^2] = 1$, hence $\E[v_1^2] = 1/\dx$. Substituting:
\[
    \E[\norm{\tilde{g}}^2\mid x] = \dx^2 L^2 \cdot \frac{1}{\dx} = \dx L^2.
\]
The bound $\dx L^2 \leq k_2 \dx L^2$ must hold, which implies $k_2 \geq 1$.
 
We conclude by noting that the variance of both estimators is upper bounded by their respective second moment. To do so, we establish the identity $\E[\norm{g_t - \E[g_t \mid x_t]}^2 \mid x_t] = \E[\norm{g_t}^2 \mid x_t] - \norm{\E[g_t \mid x_t]}^2$. Writing $g_t = \E[g_t \mid x_t] + (g_t - \E[g_t \mid x_t])$ and expanding the squared norm:
\begin{align*}
    \E[\norm{g_t}^2 \mid x_t] &= \norm{\E[g_t \mid x_t]}^2 + 2\bigl\langle \E[g_t \mid x_t],\, \E[g_t - \E[g_t \mid x_t] \mid x_t]\bigr\rangle + \E[\norm{g_t - \E[g_t \mid x_t]}^2 \mid x_t].
\end{align*}
The cross-term vanishes: $\E[g_t \mid x_t]$ is a fixed vector conditioned on $x_t$, so $\E[g_t - \E[g_t \mid x_t] \mid x_t] = \E[g_t \mid x_t] - \E[g_t \mid x_t] = 0$. Rearranging:
\[
    \E[\norm{g_t - \E[g_t \mid x_t]}^2 \mid x_t] = \E[\norm{g_t}^2 \mid x_t] - \norm{\E[g_t \mid x_t]}^2 \leq \E[\norm{g_t}^2 \mid x_t],
\]
since $\norm{\E[g_t \mid x_t]}^2 \geq 0$. An equivalent argument applies to $\tilde{g}_t$.
\end{proof}
\subsection{Convergence to Goldstein Stationary Points}\label{sec:convergence_goldstein}
We turn to the final step to obtain our main result. %
To do so, we first establish that the Jacobian of the smoothed mapping $y_\mu$ lies within the $\mu$-Goldstein subdifferential of $y^*$ (\cref{lem:jacobiangoldstein}). It then follows that the estimator $g_t$ of \algpzos lies in the $\delta$-partial Goldstein subdifferential of $F$, for $\delta\ge\mu$. For the Goldstein subdifferential, we show that $\grad F_\mu$ is close to the Goldstein subdifferential of $F$, with a distance that depends on the Lipschitz constants of $f$, $\grad f$, and $y^*$, as well as the smoothing parameter $\mu$. We then prove our main result.

\begin{lemma}[Smoothed Jacobian and Generalized Jacobian]\label{lem:jacobiangoldstein}
Let $y^*(x): \R^{\dx} \to \Y$ be $L_y$-Lipschitz continuous and single-valued for given $x$, and $\Y$ be a closed and convex set. Define $y_\mu(x) = \E_{u \sim \U(\ball^{\dx})}[y^*(x + \mu u)]$. Then for all $x \in \R^{\dx}$:
\[
\jac y_\mu(x) \in \goldsteinymu y^*(x)=\co\left\{\bigcup_{z \in\ball(x,\mu)} \partial y^*(z) \right\},
\]
where $\partial y^*$ denotes the Clarke generalized Jacobian of $y^*$.
\end{lemma}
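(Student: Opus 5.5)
I will prove the claim by strict hyperplane separation, following the roadmap in the proof overview. The key step is to rewrite $\jac y_\mu(x)$ as an average of classical Jacobians of $y^*$ over the ball $\ball(x,\mu)$. By Rademacher's theorem, $y^*$ is differentiable almost everywhere, with $\norm{\jac y^*(z)}_2\le L_y$ wherever it is differentiable.

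\textbf{Step 1: Jacobian as an average.} I will show that
\[
\jac y_\mu(x)=\E_{u\sim\U(\ball^{\dx})}\bigl[\jac y^*(x+\mu u)\bigr].
\]
Fix indices $i,j$ and consider the difference quotient
\[
\tfrac1s\bigl([y_\mu]_i(x+se_j)-[y_\mu]_i(x)\bigr)=\E_u\Bigl[\tfrac1s\bigl(y^*_i(x+\mu u+se_j)-y^*_i(x+\mu u)\bigr)\Bigr].
\]
The integrand is bounded by $L_y$, and it converges to $\partial_j y^*_i(x+\mu u)$ for almost every $u$. Dominated convergence therefore lets us pass to the limit $s\to0$. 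By \cref{lem:ysmooth-differentiable-lipschitz}, the left side converges to $[\jac y_\mu(x)]_{ij}$, which gives the identity.

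\textbf{Step 2: Membership in the Clarke Jacobian.} At every point $z$ where $y^*$ is differentiable, the constant sequence $x_k=z$ shows that $\jac y^*(z)\in\partial y^*(z)$. Consequently, $\jac y^*(x+\mu u)\in S:=\goldsteinymu y^*(x)$ for almost every $u\in\ball^{\dx}$.

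\textbf{Step 3: Separation.} I first argue that $S$ is compact and convex. The union $\bigcup_{z\in\ball(x,\mu)}\partial y^*(z)$ is bounded, since every element has norm at most $L_y$. It is also closed: $\partial y^*$ is upper semicontinuous with compact values on the compact ball (\cite{Aubin1984usccompact}), so its image of the ball is compact. The convex hull of a compact set in finite dimensions is compact, by Carathéodory's theorem.

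Now suppose $\jac y_\mu(x)\notin S$. Strict separation in $\R^{\dy\times\dx}$, equipped with the Frobenius inner product, gives a matrix $B$ and a scalar $\alpha$ such that
\[
\inner{B}{\jac y_\mu(x)}>\alpha\ge\inner{B}{M}\quad\text{for all }M\in S.
\]
Apply this to $M=\jac y^*(x+\mu u)$, which lies in $S$ for almost every $u$. Taking expectations and using Step 1 and linearity yields $\inner{B}{\jac y_\mu(x)}\le\alpha$, a contradiction. This mirrors the proof of \cref{rem:ymuiny}.

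\textbf{Main obstacle.} The delicate part is Step 1: the interchange of derivative and expectation, together with the measure-theoretic facts it relies on. One needs measurability of $u\mapsto\jac y^*(x+\mu u)$, which holds because it is an almost-everywhere limit of measurable difference quotients. One also needs the set of non-differentiability points to be null, which is Rademacher's theorem; a null set stays null under the affine map $u\mapsto x+\mu u$. A second, minor point is the compactness of $S$ in Step 3. If this is problematic, one can instead separate from the closure $\overline{S}$ and note that $\overline{S}=S$ by the upper-semicontinuity argument.
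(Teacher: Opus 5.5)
Your proposal is correct and follows essentially the same route as the paper. Both write $\jac y_\mu(x)$ as the average of classical Jacobians over $\ball(x,\mu)$, using Rademacher's theorem and dominated convergence with difference quotients bounded by $L_y$; both get compactness of the convex hull from upper semicontinuity of $\partial y^*$; and both conclude with a strict separation argument under the Frobenius inner product, reaching a contradiction after integration. Your explicit remarks on measurability of $u\mapsto \jac y^*(x+\mu u)$ and on why $\jac y^*(z)\in\partial y^*(z)$ at differentiability points are details the paper leaves implicit.
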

\begin{proof}
The proof proceeds in two steps. First, we show that the Jacobian of the smoothed mapping $y_\mu$ can be expressed as the expectation of the Jacobian of the original mapping $y^*$ over the uniform distribution on the unit ball $\ball^{\dx}$ at points where $y^*$ is differentiable. Second, we show that this Jacobian lies within the convex hull of Clarke generalized Jacobians $\partial y^*(z)$ of points $z\in\ball(x,\mu)$.

Before we begin, we recall that, since $y^*$ is $L_y$-Lipschitz continuous, by Rademacher's Theorem it is differentiable almost everywhere. Therefore, the set
\[
\Omega \coloneqq \{ u \in \ball^{\dx} : y^* \text{ is not differentiable at } x + \mu u \},
\]
of points at which $y^*$ is non-differentiable has Lebesgue measure zero. Let $p(u)$ denote the probability density function of the uniform distribution on $\ball^{\dx}$. Since $\Omega$ has measure zero, $\int_{\ball^{\dx}} p(u) \, du=\int_{\ball^{\dx}\setminus\Omega} p(u) \, du = 1$.

\noindent\textbf{Step 1: Expressing} \bm{$\jac y_\mu(x)$} \textbf{as an expectation of} \bm{$\jac y^*(z)$}. We show that, for every $x \in \R^{\dx}$, \[\jac y_\mu(x) = \int_{\ball^{\dx}\setminus\Omega} \jac y^*(x + \mu u) \, p(u) \, du.\]

We recall by \cref{lem:ysmooth-differentiable-lipschitz} that $y_\mu$ is continuously differentiable. Let $e_j$ denote the $j$-th standard basis vector in $\R^{\dx}$. For each $i \in \{1, \ldots, \dy\}$ and $j \in \{1, \ldots, \dx\}$, the partial derivative of the $i$-th component $y_{\mu,i}$ of $y_\mu$ with respect to $x_j$ is defined by the limit of the difference quotient:
\begin{equation*}\label{eq:ymupartialderivative}
    \frac{\partial y_{\mu,i}}{\partial x_j}(x) = \lim_{t \to 0} \frac{y_{\mu,i}(x + t e_j) - y_{\mu,i}(x)}{t}.
\end{equation*}
Substituting the definition of $y_\mu(x) = \int_{\ball^{\dx}} y^*(x + \mu u) p(u) \, du$ from \eqref{eq:ymuintegral}, we have:
\begin{equation}\label{eq:ymupartialderivativeintegral}
    \frac{\partial y_{\mu,i}}{\partial x_j}(x) = \lim_{t \to 0} \int_{\ball^{\dx}} \frac{y^*_i(x + t e_j + \mu u) - y^*_i(x + \mu u)}{t} p(u) \, du.
\end{equation}
We proceed to show that Lebesgue's dominated convergence theorem applies, allowing us to interchange the limit and the integral. To do so, we verify that the difference quotient (i) converges pointwise almost everywhere and (ii) is dominated by an integrable function. 
\begin{enumerate}[label=(\roman*)]
    \item \emph{Pointwise limit almost everywhere:} For all $u \in \ball^{\dx} \setminus \Omega$, the point $x + \mu u$ lies in the set where $y^*$ is differentiable. For such $u$, the difference quotient converges pointwise to the partial derivative:
    \[
    \lim_{t \to 0} \frac{y^*_i(x + t e_j + \mu u) - y^*_i(x + \mu u)}{t} = \frac{\partial y^*_i}{\partial x_j}(x + \mu u).
    \]
    Since $\Omega$ has measure zero, this convergence holds for almost every $u \in \ball^{\dx}$.
    \item \emph{Dominated by integrable function:} By $L_y$-Lipschitz continuity of $y^*$, for all $t \neq 0$, the difference quotient is bounded:
    \[
    \left\| \frac{y^*_i(x + t e_j + \mu u) - y^*_i(x + \mu u)}{t} \right\| \leq \frac{L_y \| (x + t e_j + \mu u) - (x + \mu u) \|}{|t|} = \frac{L_y \|t e_j\|}{|t|} = L_y.
    \]
\end{enumerate}
Since the constant function $L_y$ is integrable over $\ball^{\dx}$ (hence $\ball^{\dx}\setminus\Omega$), the conditions for the dominated convergence theorem are satisfied. Thus, we can interchange the limit and the integral in \eqref{eq:ymupartialderivativeintegral}:
\begin{equation*}
\begin{split}
    \frac{\partial y_{\mu,i}}{\partial x_j}(x) &= \lim_{t \to 0} \int_{\ball^{\dx}} \frac{y^*_i(x + t e_j + \mu u) - y^*_i(x + \mu u)}{t} p(u) \, du \\
    &= \lim_{t \to 0} \int_{\ball^{\dx} \setminus \Omega} \frac{y^*_i(x + t e_j + \mu u) - y^*_i(x + \mu u)}{t} p(u) \, du \\
    &= \int_{\ball^{\dx} \setminus \Omega} \frac{\partial y^*_i}{\partial x_j}(x + \mu u) \, p(u) \, du,
\end{split}
\end{equation*}
where the second equality holds because $\Omega$ has Lebesgue measure zero and the difference quotient is bounded by $L_y$, so $\int_\Omega (\cdot) \, p(u) \, du = 0$. The final equality applies the dominated convergence theorem on $\ball^{\dx} \setminus \Omega$: the pointwise limit exists for all $u \in \ball^{\dx} \setminus \Omega$, and the constant $L_y$ is integrable over this set since $\int_{\ball^{\dx} \setminus \Omega} p(u) \, du = 1$.

Collecting all partial derivatives into the Jacobian matrix yields:
\begin{equation}\label{eq:jacymuintegral}
    \jac y_\mu(x) =  \int_{\ball^{\dx} \setminus \Omega} \jac y^*(x + \mu u) \, p(u) \, du = \E_{u \sim \U(\ball^{\dx})} [\jac y^*(x + \mu u)],
\end{equation}
where the expectation is taken over the points at which $y^*$ is differentiable.

\noindent\textbf{Step 2:} \bm{$\jac y_\mu(x)$} \textbf{lies in the convex hull of Clarke generalized Jacobians}. Define the set
\begin{equation*}
    K_\mu \coloneqq \co\left\{ \bigcup_{z \in \ball(x,\mu)} \partial y^*(z) \right\} \subseteq \R^{\dy \times \dx}.
\end{equation*}
We show that $\jac y_\mu(x) \in K_\mu$ via a contradiction argument based on the strict hyperplane separation theorem. 

First, we establish that $K_\mu$ is closed, compact, and convex. The Clarke generalized Jacobian $\partial y^*(z)$ is a nonempty, convex, compact subset of $\R^{\dy \times \dx}$ for each $z$, and the set-valued mapping $z \mapsto \partial y^*(z)$ is upper semicontinuous \cite[Prop.~2.6.2]{clarke1990generalizedgradients}. Since $\ball(x, \mu)$ is compact and the image of a compact set under an upper semicontinuous map with compact values is compact \cite[Prop.~3]{Aubin1984usccompact}, the set $\bigcup_{z \in \ball(x, \mu)} \partial y^*(z)$ is compact. In the finite-dimensional Euclidean space of matrices $\R^{\dy \times \dx}$, the convex hull of a compact set is compact and closed, hence $K_\mu$ is closed, compact, and convex.

Suppose for contradiction that $\jac y_\mu(x) \notin K_\mu$. We apply the strict hyperplane separation theorem in the finite-dimensional Hilbert space $(\R^{\dy \times \dx}, \langle \cdot, \cdot \rangle_F)$, where the Frobenius inner product $\langle M, A \rangle_F = \Tr(M^\top A)$ corresponds to the Euclidean inner product $\langle \mathrm{vec}(M), \mathrm{vec}(A) \rangle$ under the isomorphism between $\R^{\dy \times \dx}$ and $\R^{\dy \cdot \dx}$ induced by the vectorization operator $\mathrm{vec}: \R^{\dy \times \dx} \to \R^{\dy \cdot \dx}$. Since $K_\mu$ is closed, compact, and convex, and $\{\jac y_\mu(x)\}$ (a singleton) is a closed set disjoint from $K_\mu$ by assumption, the strict hyperplane separation theorem guarantees that there exists a matrix $M \in \R^{\dy \times \dx}$ and a scalar $\alpha \in \R$ such that:
\begin{equation} \label{eq:hyperplaneseparation}
\langle M, \jac y_\mu(x) \rangle_F > \alpha \quad \text{and} \quad \langle M, A \rangle_F < \alpha \quad \forall A \in K_\mu.
\end{equation}

On the set $\ball^{\dx} \setminus \Omega$, the classical Jacobian $\jac y^*(x + \mu u)$ exists and, by definition of the Clarke generalized Jacobian, satisfies $\jac y^*(x + \mu u) \in \partial y^*(x + \mu u)$. Since $x + \mu u \in \ball(x, \mu)$, we have $\jac y^*(x + \mu u) \in K_\mu$ for all $u \in \ball^{\dx} \setminus \Omega$. By the second inequality in \eqref{eq:hyperplaneseparation},
\[
\langle M, \jac y^*(x + \mu u) \rangle_F < \alpha \quad \text{for all } u \in \ball^{\dx} \setminus \Omega.
\]
Using \eqref{eq:jacymuintegral}, we express the left-hand side of the first inequality in \eqref{eq:hyperplaneseparation} as
\begin{align*}
    \langle M, \jac y_\mu(x) \rangle_F = \left\langle M, \int_{\ball^{\dx} \setminus \Omega} \jac y^*(x + \mu u) \, p(u) \, du \right\rangle_F &= \int_{\ball^{\dx} \setminus \Omega} \langle M, \jac y^*(x + \mu u) \rangle_F \, p(u) \, du \\
    &\leq \int_{\ball^{\dx} \setminus \Omega} \alpha \, p(u) \, du = \alpha,
\end{align*}
where the final equality uses $\int_{\ball^{\dx} \setminus \Omega} p(u) \, du = 1$ as $\Omega$ has measure zero. This gives $\langle M, \jac y_\mu(x) \rangle \leq \alpha$, which contradicts $\langle M, \jac y_\mu(x) \rangle > \alpha$ from \eqref{eq:hyperplaneseparation}. Therefore, our initial assumption that $\jac y_\mu(x) \notin K_\mu$ must be false, and we conclude that $\jac y_\mu(x) \in K_\mu = \co\left\{ \bigcup_{z \in \ball(x,\mu)} \partial y^*(z) \right\}$. \qedhere
\end{proof}

\subsubsection{Distance to Goldstein Subdifferential}
The following lemma bounds the distance between the gradient of the surrogate function $F_\mu$ to elements of the Goldstein subdifferential $\goldsteinfullmu F(x)$.
\begin{lemma}[Distance to Goldstein subdifferential]\label{lem:distancefullgoldstein}
Let $F_\mu(x) = f(x,y_\mu(x))$ and $y_\mu(x)$ defined as in \cref{eq:ysmooth}. Under \cref{ass:lipschitz}, for any $x \in \R^{d_x}$ and $\mu >0$, there exists $\bar{q} \in \goldsteinfullmu F(x)$ such that \[\norm{\grad F_\mu(x) - \bar{q}} \leq (1+L_y)L_g(1+2L_y)\mu.\]
\end{lemma}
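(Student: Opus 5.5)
We need to exhibit $\bar q\in\goldsteinfullmu F(x)$ close to $\grad F_\mu(x)=\grad_x f(x,y_\mu(x))+\jac y_\mu(x)^\top\grad_y f(x,y_\mu(x))$. The natural choice is to keep $\jac y_\mu(x)$ as an average of nearby Jacobians, but to move the outer gradients of $f$ inside that average.

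\textbf{Step 1: reduce $\jac y_\mu(x)$ to finitely many Clarke Jacobians.} By \cref{lem:jacobiangoldstein}, $\jac y_\mu(x)\in\co\{\cup_{z\in\ball(x,\mu)}\partial y^*(z)\}$. This set lives in $\R^{\dy\times\dx}$, a space of dimension $\dy\dx$. Carathéodory's theorem therefore gives points $z_1,\dots,z_m\in\ball(x,\mu)$, with $m\le \dy\dx+1$, matrices $M_k\in\partial y^*(z_k)$, and convex weights $\lambda_k$ such that
\[
\jac y_\mu(x)=\sum_k\lambda_k M_k .
\]

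\textbf{Step 2: define the candidate.} Set
\[
\bar q=\sum_k\lambda_k\bigl(\grad_x f(z_k,y^*(z_k))+M_k^\top\grad_y f(z_k,y^*(z_k))\bigr).
\]
By the chain rule \eqref{eq:clarke_gradient_F}, each summand lies in $\partial F(z_k)$. Hence $\bar q$ is a convex combination of elements of Clarke gradients at points of $\ball(x,\mu)$, so $\bar q\in\goldsteinfullmu F(x)$.

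\textbf{Step 3: bound the difference.} Use $\sum_k\lambda_k=1$ to write
\begin{align*}
\grad F_\mu(x)-\bar q={}&\sum_k\lambda_k\bigl[\grad_x f(x,y_\mu(x))-\grad_x f(z_k,y^*(z_k))\bigr]\\
&+\sum_k\lambda_k M_k^\top\bigl[\grad_y f(x,y_\mu(x))-\grad_y f(z_k,y^*(z_k))\bigr].
\end{align*}
Both brackets are controlled by $L_g$ times the distance $\norm{(x,y_\mu(x))-(z_k,y^*(z_k))}$, which we bound by
\begin{align*}
\norm{x-z_k}+\norm{y_\mu(x)-y^*(x)}+\norm{y^*(x)-y^*(z_k)}
&\le \mu+L_y\mu+L_y\mu\\
&=(1+2L_y)\mu,
\end{align*}
using \cref{lem:properties-smoothing}(i) for the middle term and Lipschitzness of $y^*$ for the last. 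For the second sum we also need $\norm{M_k}_2\le L_y$. This holds because every limiting Jacobian of an $L_y$-Lipschitz map has spectral norm at most $L_y$ (same argument as \cref{lem:jacobiannorm}), and the spectral norm is convex, so the bound passes to convex hulls. Combining, the total is at most $(1+L_y)L_g(1+2L_y)\mu$, as claimed.

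\textbf{Main obstacle.} The main technical point is Step 1: we need a finite convex representation so that the outer gradients can be paired with the specific $z_k$. This is exactly where Carathéodory is needed, together with the closedness of the hull established in \cref{lem:jacobiangoldstein}. A second point is that the first sum must be written with the same weights $\lambda_k$, which is legitimate precisely because $\sum_k\lambda_k=1$. The remaining estimates are routine.
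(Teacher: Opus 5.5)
Your proposal is correct and follows the paper's proof essentially step for step. You apply Carathéodory to $\jac y_\mu(x)\in\co\{\cup_{z\in\ball(x,\mu)}\partial y^*(z)\}$ from \cref{lem:jacobiangoldstein}, take the candidate $\bar q=\sum_k\lambda_k q_k$ with $q_k\in\partial F(z_k)$, bound each outer-gradient difference by $L_g(1+2L_y)\mu$, and use $\|M_k\|_2\le L_y$, exactly as the paper does. One minor point: Carathéodory itself only needs membership in the convex hull, which \cref{lem:jacobiangoldstein} already provides; closedness of the hull is used inside that lemma's separation argument, not in this step.
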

\begin{proof}
By \cref{lem:jacobiangoldstein}, $\jac y_\mu(x) \in \co\left\{\bigcup_{z \in \ball(x, \mu)} \partial y^*(z)\right\}$. By Carathéodory's theorem, there exist $k \leq d_y d_x + 1$ matrices $M_1, \ldots, M_k$, where each $M_i \in \partial y^*(z_i)$ for some $z_i \in \ball(x, \mu)$ (where $z_i$ need not be distinct), and weights $\lambda_i \geq 0$ with $\sum_{i=1}^k \lambda_i = 1$ such that
\begin{equation*}
\jac y_\mu(x) = \sum_{i=1}^k \lambda_i M_i.
\end{equation*}
Define, for each $i \in \{1, \ldots, k\}$:
\begin{equation*}
q_i = \grad_x f(z_i, y^*(z_i)) + M_i^\top \grad_y f(z_i, y^*(z_i)).
\end{equation*}
By \eqref{eq:clarke_gradient_F}, $q_i \in \partial F(z_i)$ for each $i$.
Define
\begin{equation*}
\bar{q} = \sum_{i=1}^k \lambda_i q_i.
\end{equation*}
Since each $q_i \in \partial F(z_i)$ with $z_i \in \ball(x, \mu)$, and $\bar{q}$ is a convex combination, we have by \cref{eq:goldstein_full_F} of the Goldstein subdifferential
\begin{equation*}
\bar{q} \in \co\left\{ \bigcup_{z \in \ball(x, \mu)} \partial F(z) \right\} = \goldsteinfullmu F(x).
\end{equation*}

We now bound $\|\grad F_\mu(x) - \bar{q}\|$. Using the chain rule \eqref{eq:Fsmooth-chainrule} for $F_\mu$:
\begin{align*}
\grad F_\mu(x) - \bar{q} &= \grad_x f(x, y_\mu(x)) + \jac y_\mu(x)^\top \grad_y f(x, y_\mu(x)) - \sum_{i=1}^k \lambda_i q_i \\
&= \grad_x f(x, y_\mu(x)) + \left( \sum_{i=1}^k \lambda_i M_i \right)^\top \grad_y f(x, y_\mu(x)) - \sum_{i=1}^k \lambda_i q_i \\
&= \sum_{i=1}^k \lambda_i \left[ \grad_x f(x, y_\mu(x)) + M_i^\top \grad_y f(x, y_\mu(x)) - q_i \right].
\end{align*}
For each $i \in \{1, \ldots, k\}$, we bound the term in brackets:
\begin{align*}
&\|\grad_x f(x, y_\mu(x)) + M_i^\top \grad_y f(x, y_\mu(x)) - q_i\| \\
&= \|\grad_x f(x, y_\mu(x)) - \grad_x f(z_i, y^*(z_i)) + M_i^\top [\grad_y f(x, y_\mu(x)) - \grad_y f(z_i, y^*(z_i))]\| \\
&\leq \|\grad_x f(x, y_\mu(x)) - \grad_x f(z_i, y^*(z_i))\| + \|M_i\|_2 \|\grad_y f(x, y_\mu(x)) - \grad_y f(z_i, y^*(z_i))\|.
\end{align*}
For the term $\|\grad_x f(x, y_\mu(x)) - \grad_x f(z_i, y^*(z_i))\|$:
\begin{equation*}
    \begin{split}
        \|\grad_x f(x, y_\mu(x)) - \grad_x f(z_i, y^*(z_i))\| &\leq L_g \|(x, y_\mu(x)) - (z_i, y^*(z_i))\| \\
        &\leq L_g (\|x - z_i\| + \|y_\mu(x) - y^*(z_i)\|) \\
        &\leq L_g (\|x - z_i\| + \|y_\mu(x) - y^*(x)\| + \|y^*(x) - y^*(z_i)\|) \\
        &\leq L_g (\|x - z_i\| +  L_y \mu + L_y \|x - z_i\|) \\
        &\leq L_g (\mu + L_y \mu + L_y \mu) = L_g (\mu+2L_y \mu),
    \end{split}
\end{equation*}
where the first inequality follows by $L_g$-Lipschitz continuity of $\grad_x f$ with respect to $w=(x,y)$, $\|y_\mu(x) - y^*(z_i)\| \leq \|y_\mu(x) - y^*(x)\| + \|y^*(x) - y^*(z_i)\|$ due to the triangle inequality, $\|y_\mu(x) - y^*(x)\| \leq L_y \mu$ by \cref{lem:properties-smoothing}(i), $\|y^*(x) - y^*(z_i)\| \leq L_y \|x - z_i\|$ by the Lipschitz continuity of $y^*$, and $\|x - z_i\| \leq \mu$ since $z_i \in \ball(x, \mu)$. 

Similarly, for the term $\|\grad_y f(x, y_\mu(x)) - \grad_y f(z_i, y^*(z_i))\|$, by the $L_g$-Lipschitz continuity of $\grad_y f$ and equivalent reasoning as above:
\begin{equation*}
\|\grad_y f(x, y_\mu(x)) - \grad_y f(z_i, y^*(z_i))\| \leq L_g (\|x - z_i\| + \|y_\mu(x) - y^*(z_i)\|) \leq L_g (\mu + 2L_y \mu).
\end{equation*}
For the spectral norm of $M_i$, we have $\|M_i\|_2 \leq L_y$. This follows from the fact that $M_i \in \partial y^*(z_i)$, and elements of the Clarke generalized Jacobian are convex combinations of limits of classical Jacobians. Particularly, since $\|\jac y^*(\tilde{x})\|_2 \leq L_y$ at all points $\tilde{x}$ at which $y^*(\tilde{x})$ is differentiable (as $y^*(\tilde{x})$ is $L_y$-Lipschitz), and since the spectral norm is convex and the bound is preserved under limits, $\|M_i\|_2 \leq L_y$ for all $M_i \in \partial y^*(z_i)$.

Combining these bounds:
\begin{align*}
&\|\grad_x f(x, y_\mu(x)) + M_i^\top \grad_y f(x, y_\mu(x)) - q_i\| \\
&\leq \|\grad_x f(x, y_\mu(x)) - \grad_x f(z_i, y^*(z_i))\| + \|M_i\|_2 \|\grad_y f(x, y_\mu(x)) - \grad_y f(z_i, y^*(z_i))\| \\
&\leq L_g (\mu + 2L_y \mu) + L_y \cdot L_g (\mu + 2L_y \mu) \\
&= (1 + L_y) L_g (1 + 2L_y) \mu.
\end{align*}
Taking norms:
\begin{align*}
\|\grad F_\mu(x) - \bar{q}\| &= \left\| \sum_{i=1}^k \lambda_i \left[ \grad_x f(x, y_\mu(x)) + M_i^\top \grad_y f(x, y_\mu(x)) - q_i \right] \right\| \\
&\leq \sum_{i=1}^k \lambda_i \|\grad_x f(x, y_\mu(x)) + M_i^\top \grad_y f(x, y_\mu(x)) - q_i\| \\
&\leq \sum_{i=1}^k \lambda_i (1 + L_y) L_g (1 + 2L_y) \mu \\
&= (1 + L_y) L_g (1 + 2L_y) \mu. \qedhere
\end{align*}
\end{proof}

\subsubsection{Proof of \texorpdfstring{\Cref{thm:convergencegoldsteinpoint}}{Theorem~\ref{thm:convergencegoldsteinpoint}}}
We now state and prove our main convergence result. We first state the result and prove it for $(\delta,\varepsilon)$-partial Goldstein stationary points and later for $(\delta,\varepsilon)$ Goldstein stationary points. We proceed in this way as the arguments differ. 

\begin{theorem}[Convergence to $(\delta,\varepsilon)$-partial Goldstein stationary point]\label{thm:partial_goldstein}
Let $x^R$ be chosen uniformly at random from the iterates $\{x_0, \ldots, x_{T-1}\}$ generated by \cref{alg:partial_zero_order}. Under \cref{ass:lipschitz,ass:bounded}, with step size $\alpha = \sqrt{2(\Delta + 2 L_f L_y \mu)/(T L_F \sigma^2_{\dx})}$ and smoothing parameter $\mu = \min (\delta, \varepsilon/(\sqrt{2}C_p)) < 1$, and $C_p = L_g(1 + L_y)L_y$, after $T = \mc{O}\!\left(\dx^{3/2}/(\mu\,\varepsilon^4)\right)$ iterations, $x^R$ is, in expectation, a $(\delta,\varepsilon)$-partial Goldstein 
    stationary point of $F$, that is, 
\[\E\left[ \min\left\{ \|g\| : g \in \goldsteinpartial F(x^R) \right\} \right] \leq \varepsilon\]
\end{theorem}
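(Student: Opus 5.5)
The plan is a biased-SGD descent-lemma analysis on the $L_F$-smooth surrogate $F_\mu$ from \cref{lem:descent-Fsmooth}, followed by a transfer of the resulting bound to the partial Goldstein subdifferential via \cref{lem:jacobiangoldstein}.

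\textbf{Descent step.} Write $\bar g_t \doteq \E[g_t\mid x_t]$ and $b_t \doteq \bar g_t - \grad F_\mu(x_t)$. By \cref{lem:difference-estimate-gradfsmooth}, $\|b_t\|\le C_p\mu$. Let $\sigma^2_{\dx}\doteq k_2\dx L_f^2L_y^2+L_f^2(1+2L_y)$ be the second-moment bound of \cref{lem:estimatorsquared}. The descent lemma for $F_\mu$ gives
\[
\E[F_\mu(x_{t+1})\mid x_t]\le F_\mu(x_t)-\alpha\langle \grad F_\mu(x_t),\bar g_t\rangle+\tfrac{L_F\alpha^2}{2}\sigma^2_{\dx}.
\]
Rather than bounding $\grad F_\mu$, I would rewrite the inner product in terms of $\bar g_t$, since $\bar g_t$ is the object that lies in the partial Goldstein set. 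The polarization identity gives
\[
\langle \grad F_\mu,\bar g_t\rangle=\tfrac12\|\bar g_t\|^2+\tfrac12\|\grad F_\mu\|^2-\tfrac12\|b_t\|^2\ge \tfrac12\|\bar g_t\|^2-\tfrac12 C_p^2\mu^2.
\]
Telescoping over $t=0,\dots,T-1$ and taking total expectations then yields
\[
\frac1T\sum_{t}\E\|\bar g_t\|^2\le \frac{2(F_\mu(x_0)-\inf F_\mu)}{\alpha T}+L_F\alpha\sigma^2_{\dx}+C_p^2\mu^2.
\]
\Cref{pro:Fsmoothbound} together with \cref{ass:bounded} gives $F_\mu(x_0)-\inf F_\mu\le \Delta+2L_fL_y\mu$. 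Inserting the prescribed $\alpha$, the first two terms combine to $2\sqrt{2(\Delta+2L_fL_y\mu)L_F\sigma^2_{\dx}/T}$.

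\textbf{Transfer to partial Goldstein.} \Cref{lem:jacobiangoldstein} gives $\jac y_\mu(x_t)\in\goldsteinfullmu y^*(x_t)$. Since $\mu\le\delta$, this set is contained in $\goldsteinfull y^*(x_t)$. By \eqref{eq:gradientestimate-expectedvalue}, $\bar g_t=\grad_x f(x_t,y^*(x_t))+\jac y_\mu(x_t)^\top\grad_y f(x_t,y^*(x_t))$, which therefore belongs to $\goldsteinpartial F(x_t)$. Hence $\min\{\|g\|:g\in\goldsteinpartial F(x_t)\}\le\|\bar g_t\|$. Averaging over the uniformly chosen $R$ and applying Jensen's inequality,
\[
\E\bigl[\min\{\|g\|:g\in\goldsteinpartial F(x^R)\}\bigr]\le\Bigl(\tfrac1T\textstyle\sum_t\E\|\bar g_t\|^2\Bigr)^{1/2}.
\]

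\textbf{Parameter choices.} With $\mu\le\varepsilon/(\sqrt2 C_p)$ the bias term satisfies $C_p^2\mu^2\le\varepsilon^2/2$. It remains to make the $T$-dependent term at most $\varepsilon^2/2$, which holds once $T\ge 32(\Delta+2L_fL_y\mu)L_F\sigma^2_{\dx}/\varepsilon^4$. Now $L_F=\mc{O}(\sqrt{\dx}/\mu)$ since $\mu<1$, and $\sigma^2_{\dx}=\mc{O}(\dx)$. This gives $T=\mc{O}(\dx^{3/2}/(\mu\varepsilon^4))$, and then $\alpha=\Theta(\mu^{1/2}T^{-1/2}\dx^{-3/4})$ as in \cref{thm:convergencegoldsteinpoint}.

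\textbf{Main obstacle.} The delicate step is the handling of the bias. A naive bound via $\langle\grad F_\mu,b_t\rangle\le\|\grad F_\mu\|C_p\mu$ would produce a cross term that does not telescope cleanly. The polarization trick avoids this: it measures stationarity directly through $\bar g_t$, the quantity certified to lie in $\goldsteinpartial F(x_t)$, and leaves only the $C_p^2\mu^2$ residual. That residual is exactly what forces decoupling $\mu$ from $\delta$ and produces the $\sqrt2$ in the choice of $\mu$.
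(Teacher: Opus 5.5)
Your proposal is correct and follows the paper's proof essentially step for step. It uses the same ingredients: the descent lemma on $F_\mu$ and the lower bound $\langle \grad F_\mu(x_t),\bar g_t\rangle \ge \tfrac12\|\bar g_t\|^2-\tfrac12 C_p^2\mu^2$ (your polarization step is equivalent to the paper's Young's inequality on $\langle b_t,\bar g_t\rangle$), the initial gap bound $\Delta+2L_fL_y\mu$ via \cref{pro:Fsmoothbound}, the containment $\bar g_t\in\goldsteinpartial F(x_t)$ from \cref{lem:jacobiangoldstein} with $\mu\le\delta$, Jensen's inequality, and the same $\varepsilon^2/2$ split giving $T\ge 32(\Delta+2L_fL_y\mu)L_F\sigma^2_{\dx}/\varepsilon^4$.
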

\begin{proof}
Define the smoothed objective $F_\mu(x) = f(x, y_\mu(x))$. Recall that, by \cref{lem:descent-Fsmooth}, $F_\mu(x)$ is $L_F$-smooth, with $L_F = L_g(1 + L_y)^2+\frac{k_1 L_f L_y \sqrt{\dx}}{\mu}$-Lipschitz gradient. By descent lemma for $F_\mu$, and since $x_{t+1} = x_t - \alpha g_t$ by \cref{alg:partial_zero_order}:
\begin{equation*}
    \begin{split}
        F_\mu(x_{t+1}) &\leq F_\mu(x_t) + \inner{\grad F_\mu(x_t)}{x_{t+1} - x_t} + \frac{L_F}{2}\norm{x_{t+1} - x_t}^2 \\
        &= F_\mu(x_t) - \alpha \inner{\grad F_\mu(x_t)}{g_t} + \frac{\alpha^2 L_F}{2}\norm{g_t}^2.
    \end{split}
\end{equation*}
Taking the conditional expectation given $x_t$:
\begin{equation}\label{p3_eq:descentcondexp_partial}
\E[F_\mu(x_{t+1}) \mid x_t] \leq F_\mu(x_t) - \alpha \langle \grad F_\mu(x_t), \E[g_t \mid x_t] \rangle + \frac{\alpha^2 L_F}{2}\E[\|g_t\|^2 \mid x_t].
\end{equation}
Define $\bar{g}_t = \E[g_t|x_t]$ and $b_t = \bar{g}_t - \grad F_\mu(x_t)$ with $\norm{b_t} \leq C_p\mu$ by \cref{lem:difference-estimate-gradfsmooth}. Rewriting the inner product using $\grad F_\mu = \bar{g}_t - b_t$:
\begin{equation*}
    \begin{split}
    \langle \grad F_\mu(x_t), \E[g_t \mid x_t] \rangle &=\langle\grad F_\mu(x_t),\bar{g}_t\rangle = \langle\bar{g}_t - b_t,\,\bar{g}_t\rangle \\
    &= \norm{\bar{g}_t}^2 - \langle b_t,\bar{g}_t\rangle \geq \frac{1}{2}\norm{\bar{g}_t}^2 - \frac{1}{2}\norm{b_t}^2 \geq \frac{1}{2}\norm{\bar{g}_t}^2 - \frac{1}{2}C_p^2\mu^2,
    \end{split}
\end{equation*}
where the first inequality uses $\langle a,b\rangle \leq \frac{1}{2}\norm{a}^2 + \frac{1}{2}\norm{b}^2$ for any real-valued vectors $a,b$, thus $\langle b_t,\bar{g}_t\rangle \leq \frac{1}{2}\norm{b_t}^2 + \frac{1}{2}\norm{\bar{g}_t}^2$, and the second inequality uses $\norm{b_t} \leq C_p\mu$ by \cref{lem:difference-estimate-gradfsmooth}. 

Further, we recall by \cref{lem:estimatorsquared} that $\E[\|g_t\|^2 \mid x_t] \leq k_2 \dx L_f^2 L_y^2 + L_f^2(1+2L_y) =: \sigma^2_{\dx}$, where $k_2$ is a constant. Substituting into \eqref{p3_eq:descentcondexp_partial}:
\[
    \E[F_\mu(x_{t+1})|x_t] \leq F_\mu(x_t) - \frac{\alpha}{2}\norm{\bar{g}_t}^2 + \frac{\alpha}{2}C_p^2\mu^2 + \frac{\alpha^2 L_F}{2}\sigma^2_{\dx}.
\]
Taking expectations (where $\E[\E[F_\mu(x_{t+1})|x_t]] = \E[F_\mu(x_{t+1})]$ by the law of total expectation), summing from $t=0$ to $T-1$, telescoping $\sum_{t=0}^{T-1}\E[F_\mu(x_t)-F_\mu(x_{t+1})]$ to $\E[F_\mu(x_0)] - \E[F_\mu(x_T)]$, dividing by $\alpha T/2$ and re-arranging for $\E[\|\bar{g}\|^2]$ yields 
\begin{equation*}
    \frac{1}{T} \sum_{t=0}^{T-1}\E[\norm{\bar{g}_t}^2] \leq \frac{2(\E[F_\mu(x_0)] - \E[F_\mu(x_{T})])}{\alpha T} + C_p^2\mu^2 + \alpha L_F\sigma^2_{\dx}.
\end{equation*}
We now bound the term $\E[F_\mu(x_0)] - \E[F_\mu(x_T)]$. Recall that $F_\mu(x) = f(x, y_\mu(x))$ and $F(x) = f(x, y^*(x))$. By \cref{pro:Fsmoothbound}, $|F_\mu(x) - F(x)| \leq L_f L_y \mu$. By \cref{ass:bounded}, $f(x_0, y^*(x_0)) - \inf_{x \in \R^{\dx}} f(x, y^*(x)) \leq \Delta$, so $F(x_0) - F^* \leq \Delta$, where $F^* = \inf_{x \in \R^{\dx}} F(x)$. Therefore:
\begin{align*}
    \E[F_\mu(x_0)] - \E[F_\mu(x_T)] &= \E[F_\mu(x_0) - F(x_0)] + \E[F(x_0) - F(x_T)] + \E[F(x_T) - F_\mu(x_T)] \\
    &\leq |\E[F_\mu(x_0) - F(x_0)]| + \E[F(x_0) - F(x_T)] + |\E[F(x_T) - F_\mu(x_T)]| \\
    &\leq L_f L_y \mu + \E[F(x_0) - F(x_T)] + L_f L_y \mu \\
    &= 2L_f L_y \mu + \E[F(x_0) - F(x_T)] \\
    &\leq 2L_f L_y \mu + F(x_0) - F^* \\
    &\leq 2L_f L_y \mu + \Delta =:\Delta_\mu,
\end{align*}
where we use that $\E[F(x_T)] \geq F^*$ since $F^*= \inf_x F(x)$ is the infimum of $F$. Substituting into the bound for $\E[\|\bar{g}\|^2]$:
\begin{equation}\label{p3_eq:avg-estimatebound-descent}
    \frac{1}{T} \sum_{t=0}^{T-1}\E[\norm{\bar{g}_t}^2] \leq \frac{2(2L_f L_y \mu + \Delta)}{\alpha T} + C_p^2\mu^2 + \alpha L_F\sigma^2_{\dx}.
\end{equation}

\noindent\emph{Choice of $\alpha$.} We choose the stepsize to balance the first and third terms of the bound in \eqref{p3_eq:avg-estimatebound-descent}. Setting the first and third term equal gives
\begin{equation*}
    \alpha = \sqrt{\frac{2(\Delta + 2 L_f L_y \mu)}{T L_F \sigma^2_{\dx}}},
\end{equation*}
and we have
\begin{equation}\label{eq:estimate_bound}
    \frac{1}{T}\sum_{t=0}^{T-1}\E[\norm{\bar{g}_t}^2] \leq 2\sqrt{\frac{2(\Delta + 2 L_f L_y \mu) L_F \sigma^2_{\dx}}{T}} + C_p^2\mu^2.
\end{equation}

\noindent\emph{Containment of $\E[g_t\mid x_t]$ in partial Goldstein subdifferential.}
Since $g_t = \grad_x f(x_t, y^*(x_t)) + H_t^\top \grad_y f(x_t, y^*(x_t))$ by \cref{alg:partial_zero_order}, and $H_t$ is an unbiased estimator of $\jac y_\mu(x_t)$ by \cref{lem:jacobianunbiasedestimator}, and $\jac y_\mu(x_t)$ lies in the Goldstein subdifferential $\goldsteinfullmu y^*(x_t)$ of $y^*(x_t)$, we have 
\begin{equation*}
    \E[H_t|x_t] = \jac y_\mu(x_t) \in \goldsteinfullmu y^*(x_t) = \co\left\{ \cup_{z \in \ball(x_t, \mu)} \partial y^*(z) \right\}
\end{equation*}
Since $\mu \leq \delta$, we have $\ball(x_t, \mu) \subseteq \ball(x_t, \delta)$, so $\goldsteinfullmu y^*(x_t) \subseteq \goldsteinfull y^*(x_t)$. Therefore $\jac y_\mu(x_t) \in \goldsteinfull y^*(x_t)$. By the definition of the partial Goldstein subdifferential,
\[
    \goldsteinpartial F(x) = \left\{ \grad_x f(x, y^*(x)) + M^\top \grad_y f(x, y^*(x)) : M \in \goldsteinfull y^*(x) \right\},
\]
and $\bar{g}_t=\E[g_t \mid x_t]$ has precisely this form with $M = \jac y_\mu(x_t) \in \goldsteinfull y^*(x_t)$. Hence
\begin{equation}\label{eq:containment}
    \bar{g}_t \in \goldsteinpartial F(x_t) \quad \text{for all } t=0,\ldots,T-1.
\end{equation}
As an immediate consequence:
\begin{equation}\label{eq:dist_bound}
    \min\!\big\{\norm{g} : g \in \goldsteinpartial F(x_t)\big\} \leq \norm{\bar{g}_t}.
\end{equation}

\noindent\emph{From the average bound to the output guarantee.}
Let $x^R$ be chosen uniformly at random from $\{x_0, \ldots, x_{T-1}\}$, and let $R\in\{0,\ldots,T-1\}$ be its corresponding time index. By~\eqref{eq:dist_bound}:
\begin{equation}\label{eq:exp_dist}
    \E\!\left[\min\!\big\{\norm{g} : g \in \goldsteinpartial F(x^R)\big\}\right] \leq \E[\norm{\bar{g}_{R}}].
\end{equation}
For the right-hand side, since $x^R$ is uniform over $\{0,\ldots,T{-}1\}$:
\begin{equation}\label{eq:uniform_avg}
    \E[\norm{\bar{g}_{R}}^2] = \frac{1}{T}\sum_{t=0}^{T-1}\E[\norm{\bar{g}_t}^2].
\end{equation}
Applying Jensen's inequality ($\sqrt{\cdot}$ is concave, so $\E[\sqrt{X}] \leq \sqrt{\E[X]}$):
\begin{equation}\label{eq:partialgoldstein_jensen}
    \E[\norm{\bar{g}_{R}}] = \E\!\left[\sqrt{\norm{\bar{g}_{R}}^2}\right] \leq \sqrt{\E[\norm{\bar{g}_{R}}^2]} = \sqrt{\frac{1}{T}\sum_{t=0}^{T-1}\E[\norm{\bar{g}_t}^2]}.
\end{equation}
Substituting \eqref{eq:estimate_bound} into~\eqref{eq:partialgoldstein_jensen} and combining with~\eqref{eq:exp_dist}:
\begin{equation}\label{eq:partial_combined}
    \E\!\left[\min\!\big\{\norm{g} : g \in \goldsteinpartial F(x^R)\big\}\right] \leq \sqrt{\frac{1}{T}\sum_{t=0}^{T-1}\E[\norm{\bar{g}_t}^2]} \leq \left(2\sqrt{\frac{2\Delta_\mu\,L_F\,\sigma^2_{\dx}}{T}} + C_p^2\mu^2\right)^{\!1/2}.
\end{equation}

\noindent\emph{Choice of $\mu$ and $T$ to guarantee $(\delta,\varepsilon)$-stationarity.} We require the right-hand side of~\eqref{eq:partial_combined} to be at most $\varepsilon$:
\begin{equation*}
    2\sqrt{\frac{2\Delta_\mu\,L_F\,\sigma^2_{\dx}}{T}} + C_p^2\mu^2 \leq \varepsilon^2.
\end{equation*}
We allocate $\varepsilon^2/2$ to each term.

\noindent\emph{Choice of $\mu$.} The constraint $C_p^2\mu^2 \leq \varepsilon^2/2$ requires $\mu \leq \varepsilon/(\sqrt{2}\,C_p)$. Further, we need to require $\mu \leq \delta$, to guarantee that $\jac y_\mu(x_t) \in \goldsteinfull y^*(x_t)$ as argued above. Hence we set
\[
    \mu = \min\!\left(\delta,\;\frac{\varepsilon}{\sqrt{2}\,C_p}\right).
\]

\noindent\emph{Choice of $T$.} It remains to ensure $2\sqrt{2\Delta_\mu\,L_F\,\sigma^2_{\dx}/T} \leq \varepsilon^2/2$, i.e.,
\begin{equation}\label{eq:T_partial}
    T \geq \frac{32\,\Delta_\mu\,L_F\,\sigma^2_{\dx}}{\varepsilon^4}.
\end{equation}
We simplify $\Delta_\mu$, $L_F$, and $\sigma^2_{\dx}$ in terms of $\mu$ and $\dx$: Since $\dx\ge 1$, $\sigma^2_{\dx} = k_2 \dx L_f^2 L_y^2 + L_f^2(1+2L_y) = \mc{O}(\dx)$, hence
\begin{equation*}
    \frac{32\,\Delta_\mu\,L_F\,\sigma^2_{\dx}}{\varepsilon^4} = \mc{O}\!\left(\frac{(1+\mu)(1+\sqrt{\dx}/\mu)\,\dx}{\varepsilon^4}\right) = O\!\left(\frac{\dx + \dx^{3/2}/\mu + \mu\dx + \dx^{3/2}}{\varepsilon^4}\right).
\end{equation*}
Under the assumption $\mu \leq 1$, all four terms are absorbed by $\dx^{3/2}/\mu$:
\begin{equation*}
    T = \mc{O}\!\left(\frac{\dx^{3/2}}{\mu\,\varepsilon^4}\right).
\end{equation*}

This yields the desired result. 
\end{proof}

\begin{theorem}[Convergence to $(\delta,\varepsilon)$-Goldstein stationary point]\label{thm:full_goldstein}
Let $x^R$ be chosen uniformly at random from the iterates $\{x_0, \ldots, x_{T-1}\}$ generated by \cref{alg:partial_zero_order}. Under \cref{ass:lipschitz,ass:bounded}, with step size $\alpha = \sqrt{2(\Delta + 2 L_f L_y \mu)/(T L_F \sigma^2_{\dx})}$ and smoothing parameter $\mu = \min (\delta, \varepsilon/(2C_f)) < 1$, and $C_p = L_g(1 + L_y)L_y$ and $C_f=(1+L_y)L_g(1+2L_y)$, after $T = \mc{O}\!\left(\dx^{3/2}/(\mu\,\varepsilon^4)\right)$ iterations, $x^R$ is, in expectation, a $(\delta,\varepsilon)$-Goldstein stationary point of $F$, that is,
    \[\E\left[ \min\left\{ \|g\| : g \in \goldsteinfull F(x^R) \right\} 
    \right] \leq \varepsilon.\]
\end{theorem}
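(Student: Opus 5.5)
The plan follows the proof of \cref{thm:partial_goldstein} up to the descent bound, but works with $\grad F_\mu(x_t)$ as the certificate rather than $\bar g_t$. As before, I set $\bar g_t=\E[g_t\mid x_t]$ and $b_t=\bar g_t-\grad F_\mu(x_t)$, with $\norm{b_t}\le C_p\mu$ by \cref{lem:difference-estimate-gradfsmooth}.

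\textbf{Step 1: descent inequality in terms of $\grad F_\mu$.} In \eqref{p3_eq:descentcondexp_partial} I rewrite the inner product around $\grad F_\mu(x_t)$:
\[
\langle\grad F_\mu(x_t),\bar g_t\rangle=\norm{\grad F_\mu(x_t)}^2+\langle\grad F_\mu(x_t),b_t\rangle\ge\tfrac12\norm{\grad F_\mu(x_t)}^2-\tfrac12C_p^2\mu^2 .
\]
Using $\sigma^2_{\dx}$ from \cref{lem:estimatorsquared}, I telescope and bound $\E[F_\mu(x_0)]-\E[F_\mu(x_T)]\le\Delta_\mu$ exactly as in the partial proof. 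With the same $\alpha$ this gives
\[
\tfrac1T\textstyle\sum_t\E\norm{\grad F_\mu(x_t)}^2\le 2\sqrt{2\Delta_\mu L_F\sigma^2_{\dx}/T}+C_p^2\mu^2 .
\]

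\textbf{Step 2: certificate in the Goldstein subdifferential.} By \cref{lem:distancefullgoldstein}, for each $t$ there is $\bar q_t\in\goldsteinfullmu F(x_t)\subseteq\goldsteinfull F(x_t)$ with $\norm{\grad F_\mu(x_t)-\bar q_t}\le C_f\mu$. The inclusion holds because $\mu\le\delta$ implies $\ball(x_t,\mu)\subseteq\ball(x_t,\delta)$, and the convex hull is monotone. Hence
\[
\min\{\norm g:g\in\goldsteinfull F(x_t)\}\le\norm{\grad F_\mu(x_t)}+C_f\mu .
\]
Taking the expectation over uniform $R$ and applying Jensen as in \eqref{eq:partialgoldstein_jensen} yields
\[
\E[\min\{\norm g: g\in\goldsteinfull F(x^R)\}]\le\Bigl(2\sqrt{2\Delta_\mu L_F\sigma^2_{\dx}/T}+C_p^2\mu^2\Bigr)^{1/2}+C_f\mu .
\]

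\textbf{Step 3: parameter choice.} Take $\mu=\min(\delta,\varepsilon/(2C_f))$, so that $C_f\mu\le\varepsilon/2$. Since $C_p=L_g(1+L_y)L_y\le C_f$, we get $C_p^2\mu^2\le\varepsilon^2/16$. It then suffices to have $2\sqrt{2\Delta_\mu L_F\sigma^2_{\dx}/T}\le 3\varepsilon^2/16$, since then the square root is at most $\varepsilon/2$. This requires
\[
T\ge c\,\Delta_\mu L_F\sigma^2_{\dx}/\varepsilon^4
\]
for an absolute constant $c$. Substituting $L_F=\mc{O}(1+\sqrt{\dx}/\mu)$, $\sigma^2_{\dx}=\mc{O}(\dx)$, $\Delta_\mu=\mc{O}(1)$ and $\mu<1$ gives $T=\mc{O}(\dx^{3/2}/(\mu\varepsilon^4))$, exactly as in \eqref{eq:T_partial}.

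\textbf{Main obstacle.} The delicate point is Step 2. Because $\bar g_t$ is not in $\goldsteinfull F(x_t)$, the bound must pass through $\grad F_\mu$. This means the bias $C_p\mu$ (from Step 1) and the Carathéodory distance $C_f\mu$ must both be absorbed, and $\mu$ has to be tuned so that their sum, together with the optimization error, stays below $\varepsilon$. Once \cref{lem:distancefullgoldstein} is available this is bookkeeping; the key check is the inequality $C_p\le C_f$, which makes the single choice $\varepsilon/(2C_f)$ suffice for both terms.
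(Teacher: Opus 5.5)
Your proposal follows the paper's proof step for step. It uses the same Young-type lower bound on $\langle \nabla F_\mu(x_t),\bar g_t\rangle$, the same telescoping with $\Delta_\mu$ and the same step size. It then builds the Carath\'eodory certificate $\bar q_t$ from the distance lemma, applies the triangle inequality and Jensen over the uniform index, and takes the same choice of $\mu$.

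The one real flaw is in Step 3, which you yourself call the key check. From $C_p\le C_f$ and $\mu\le\varepsilon/(2C_f)$ you only get $C_p\mu\le\varepsilon/2$, i.e.\ $C_p^2\mu^2\le\varepsilon^2/4$, not $\varepsilon^2/16$. With $\varepsilon^2/4$ inside the square root and $C_f\mu=\varepsilon/2$ outside, the budget $\varepsilon$ is already used up. So $C_p\le C_f$ alone cannot close the argument: in the worst case $C_p=C_f$, the final bound exceeds $\varepsilon$ for every finite $T$.

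What makes the single choice $\mu=\min(\delta,\varepsilon/(2C_f))$ work is a factor-two margin. This holds because $C_f/C_p=(1+2L_y)/L_y=2+1/L_y>2$, so $C_p<C_f/2$, and it is exactly the check the paper makes. With it, your claim $C_p^2\mu^2\le\varepsilon^2/16$ is justified. Your allocation of $3\varepsilon^2/16$ to the optimization term then goes through, slightly less conservatively than the paper's $\varepsilon^2/8$, and gives the same $T=\mathcal{O}(d_x^{3/2}/(\mu\varepsilon^4))$.
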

\begin{proof}
Define the smoothed objective $F_\mu(x) = f(x, y_\mu(x))$. Recall that, by \cref{lem:descent-Fsmooth}, $F_\mu(x)$ is $L_F$-smooth, with $L_F = L_g(1 + L_y)^2+\frac{k_1 L_f L_y \sqrt{\dx}}{\mu}$-Lipschitz gradient. By descent lemma for $F_\mu$, and since $x_{t+1} = x_t - \alpha g_t$ by \cref{alg:partial_zero_order}:
\begin{equation*}
    \begin{split}
        F_\mu(x_{t+1}) &\leq F_\mu(x_t) + \inner{\grad F_\mu(x_t)}{x_{t+1} - x_t} + \frac{L_F}{2}\norm{x_{t+1} - x_t}^2 \\
        &= F_\mu(x_t) - \alpha \inner{\grad F_\mu(x_t)}{g_t} + \frac{\alpha^2 L_F}{2}\norm{g_t}^2.
    \end{split}
\end{equation*}
Taking the conditional expectation given $x_t$:
\begin{equation}\label{p3_eq:descentcondexp_full}
\E[F_\mu(x_{t+1}) \mid x_t] \leq F_\mu(x_t) - \alpha \langle \grad F_\mu(x_t), \E[g_t \mid x_t] \rangle + \frac{\alpha^2 L_F}{2}\E[\|g_t\|^2 \mid x_t].
\end{equation}
For the inner product term, we have
\begin{align*}
\langle \grad F_\mu(x_t), \E[g_t \mid x_t] \rangle &= \langle \grad F_\mu(x_t), \grad F_\mu(x_t) \rangle + \langle \grad F_\mu(x_t), \E[g_t \mid x_t] - \grad F_\mu(x_t) \rangle \\
&= \norm{\grad F_\mu(x_t)}^2 + \langle \grad F_\mu(x_t), \E[g_t \mid x_t] - \grad F_\mu(x_t) \rangle \\
&\geq \norm{\grad F_\mu(x_t)}^2 - \norm{\grad F_\mu(x_t)}\norm{\E[g_t \mid x_t] - \grad F_\mu(x_t)} \\
&\geq \norm{\grad F_\mu(x_t)}^2 - \norm{\grad F_\mu(x_t)}L_g(1+L_y)L_y\mu \\
&\geq \norm{\grad F_\mu(x_t)}^2 - \left(\frac{1}{2}(L_g(1+L_y)L_y\mu)^2 + \frac{1}{2}\norm{\grad F_\mu(x_t)}^2\right) \\
&= \frac{1}{2}\norm{\grad F_\mu(x_t)}^2 - \frac{1}{2}L_g^2(1+L_y)^2L_y^2\mu^2
\end{align*}
where the first inequality follows as by the Cauchy--Schwarz inequality $|\langle a,b\rangle|\leq \norm{a}\norm{b}$, hence $-\norm{a}\norm{b} \leq \langle a,b\rangle \leq \norm{a}\norm{b}$, the second inequality is due to $\norm{\E[g_t \mid x_t] - \grad F_\mu(x_t)} \leq L_g(1 + L_y)L_y\mu$ by \cref{lem:difference-estimate-gradfsmooth}, and the third inequality follows from Young's inequality $ab \leq \frac{a^2}{2} + \frac{b^2}{2}$ applied to $a = \norm{\grad F_\mu(x_t)}$ and $b = L_g(1 + L_y)L_y\mu$.

For the term $\frac{\alpha^2 L_F}{2}\E[\norm{g_t}^2 \mid x_t]$, we recall from \cref{lem:estimatorsquared} that 
\begin{equation*}
    \E[\norm{g_t}^2 \mid x_t] \leq k_2 \dx L_f^2L_y^2 + L_f^2(1+2L_y),
\end{equation*}
where we define $\sigma^2_{\dx} = k_2 \dx L_f^2L_y^2 + L_f^2(1+2L_y)$ as the short-hand notation. Substituting this bound into \eqref{p3_eq:descentcondexp_full}:
\begin{equation*}
    \E[F_\mu(x_{t+1}) \mid x_t] \leq F_\mu(x_t) - \frac{\alpha}{2}\norm{\grad F_\mu(x_t)}^2 + \frac{\alpha}{2}L_g^2(1 + L_y)^2L_y^2\mu^2 + \frac{\alpha^2 L_F}{2}\sigma^2_{\dx}.
\end{equation*}
Taking expectations (where $\E[\E[F_\mu(x_{t+1})|x_t]] = \E[F_\mu(x_{t+1})]$ by the law of total expectation), summing from $t=0$ to $T-1$, telescoping $\sum_{t=0}^{T-1}\E[F_\mu(x_t)-F_\mu(x_{t+1})]$ to $\E[F_\mu(x_0)] - \E[F_\mu(x_T)]$, dividing by $\alpha T/2$ and re-arranging for $\E[\|\grad F_\mu(x_t)\|^2]$ yields 
\begin{equation}\label{p3_eq:descent_full_bound}
    \frac{1}{T}\sum_{t=0}^{T-1} \E[\|\grad F_\mu(x_t)\|^2] \leq \frac{2(\E[F_\mu(x_0)] - \E[F_\mu(x_{T})])}{\alpha T} + C_p^2\mu^2 + \alpha L_F\sigma^2_{\dx}.
\end{equation}
We now bound the term $\E[F_\mu(x_0)] - \E[F_\mu(x_T)]$. Recall that $F_\mu(x) = f(x, y_\mu(x))$ and $F(x) = f(x, y^*(x))$. By \cref{pro:Fsmoothbound}, $|F_\mu(x) - F(x)| \leq L_f L_y \mu$. By \cref{ass:bounded}, $f(x_0, y^*(x_0)) - \inf_{x \in \R^{\dx}} f(x, y^*(x)) \leq \Delta$, so $F(x_0) - F^* \leq \Delta$, where $F^* = \inf_{x \in \R^{\dx}} F(x)$. Therefore:
\begin{align*}
    \E[F_\mu(x_0)] - \E[F_\mu(x_T)] &= \E[F_\mu(x_0) - F(x_0)] + \E[F(x_0) - F(x_T)] + \E[F(x_T) - F_\mu(x_T)] \\
    &\leq |\E[F_\mu(x_0) - F(x_0)]| + \E[F(x_0) - F(x_T)] + |\E[F(x_T) - F_\mu(x_T)]| \\
    &\leq L_f L_y \mu + \E[F(x_0) - F(x_T)] + L_f L_y \mu \\
    &= 2L_f L_y \mu + \E[F(x_0) - F(x_T)] \\
    &\leq 2L_f L_y \mu + F(x_0) - F^* \\
    &\leq 2L_f L_y \mu + \Delta,
\end{align*}
where we use that $\E[F(x_T)] \geq F^*$ since $F^*$ is the global minimum. Substituting into the bound for $\E[\|\grad F_\mu(x_t)\|^2]$:
\begin{equation}\label{p3_eq:avg-grad-bound-descent}
    \frac{1}{T}\sum_{t=0}^{T-1} \E[\|\grad F_\mu(x_t)\|^2] \leq \frac{2(2L_f L_y \mu + \Delta)}{\alpha T} + C_p^2\mu^2 + \alpha L_F\sigma^2_{\dx}.
\end{equation}

\noindent\emph{Choice of $\alpha$.} We choose the stepsize to balance the first and third terms of the bound in \eqref{p3_eq:avg-grad-bound-descent}. Setting the first and third term equal gives
\begin{equation*}
    \alpha = \sqrt{\frac{2(\Delta + 2 L_f L_y \mu)}{T L_F \sigma^2_{\dx}}},
\end{equation*}
and we have
\begin{equation}\label{p3_eq:descent_bound_constants}
   \frac{1}{T}\sum_{t=0}^{T-1} \E[\|\grad F_\mu(x_t)\|^2] \leq 2\sqrt{\frac{2(\Delta + 2 L_f L_y \mu) L_F \sigma^2_{\dx}}{T}} + C_p^2\mu^2.
\end{equation}

\noindent\emph{Distance of $\grad F_\mu(x_t)$ to Goldstein subdifferential.} By Lemma~\ref{lem:distancefullgoldstein}, for each $x_t$ there exists $\bar{q}_t \in \goldsteinfullmu F(x_t) \subseteq \goldsteinfull F(x_t)$ (since $\mu \leq \delta$) with $\norm{\grad F_\mu(x_t) - \bar{q}_t} \leq C_f\mu$. By the triangle inequality:
\begin{multline}\label{eq:dist_full}
    \min\!\big\{\norm{g} : g \in \goldsteinfull F(x_t)\big\} \leq \norm{\bar{q}_t} = \norm{\grad F_\mu(x_t) + \bar{q}_t - \grad F_\mu(x_t)} \\
    \leq \norm{\grad F_\mu(x_t)} + \norm{\bar{q}_t - \grad F_\mu(x_t)} \leq \norm{\grad F_\mu(x_t)} +C_f\mu.
\end{multline}

\noindent\emph{From the average bound to the output guarantee.}
Let $x^R$ be chosen uniformly at random from $\{x_0, \ldots, x_{T-1}\}$. Taking expectations in~\eqref{eq:dist_full}:
\begin{equation}\label{eq:full_exp}
    \E\!\left[\min\!\big\{\norm{g} : g \in \goldsteinfull F(x^R)\big\}\right] \leq \E[\norm{\grad F_\mu(x^R)}] + C_f\mu.
\end{equation}
It remains to bound $\E[\norm{\grad F_\mu(x^R)}]$. Since $x^R$ is uniform over $\{x_0,\ldots,x_{T-1}\}$:
\begin{equation}\label{eq:uniform_selection}
    \E[\norm{\grad F_\mu(x^R)}^2] = \frac{1}{T}\sum_{t=0}^{T-1}\E[\norm{\grad F_\mu(x_t)}^2].
\end{equation}

We observe
\begin{equation}\label{eq:jensen_full}
    \begin{split}
    \E[\norm{\grad F_\mu(x^R)}] &= \frac{1}{T}\sum_{t=0}^{T-1}\E[\norm{\grad F_\mu(x_t)}] = \frac{1}{T}\sum_{t=0}^{T-1}\E\!\left[\sqrt{\norm{\grad F_\mu(x_t)}^2}\right] \\
    &\leq \frac{1}{T}\sum_{t=0}^{T-1}\sqrt{\E[\norm{\grad F_\mu(x_t)}^2]} \\
    &\leq \sqrt{\frac{1}{T}\sum_{t=0}^{T-1}\E[\norm{\grad F_\mu(x_t)}^2]}, 
    \end{split}
\end{equation}
where the first inequality applies Jensen's inequality $\E[\sqrt{Z}] \leq \sqrt{\E[Z]}$ to the concave function $\sqrt{\cdot}$ and $Z = \norm{\grad F_\mu(x_t)}^2$ for each $t$, and the second inequality applies the QM-AM inequality $\frac{1}{T}\sum_{t} a_t \leq \sqrt{\frac{1}{T}\sum_{t} a_t^2}$ for non-negative $a_t$ with $a_t = \sqrt{\E[\norm{\grad F_\mu(x_t)}^2]}$.

Substituting the bound on $\frac{1}{T}\sum_{t=0}^{T-1}\E[\norm{\grad F_\mu(x_t)}^2]$ from~\eqref{p3_eq:descent_bound_constants} into~\eqref{eq:jensen_full}:
\begin{equation}\label{eq:jensen_substituted}
    \E[\norm{\grad F_\mu(x^R)}] \leq \left(2\sqrt{\frac{2(\Delta + 2L_fL_y\mu)\,L_F\,\sigma^2_{\dx}}{T}} + C_p^2\mu^2\right)^{\!1/2}.
\end{equation}
Combining~\eqref{eq:full_exp} and~\eqref{eq:jensen_substituted}:
\begin{equation}\label{eq:full_combined}
    \E\!\left[\min\!\big\{\norm{g} : g \in \goldsteinfull F(x^R)\big\}\right] \leq \left(2\sqrt{\frac{2(\Delta + 2L_fL_y\mu)\,L_F\,\sigma^2_{\dx}}{T}} + C_p^2\mu^2\right)^{\!1/2} + C_f\mu.
\end{equation}

To qualify as a $(\delta,\varepsilon)$-stationary point, we require the right-hand side of~\eqref{eq:full_combined} to be at most $\varepsilon$:
\[
    \left(2\sqrt{\frac{2\Delta_\mu\,L_F\,\sigma^2}{T}} + C_p^2\mu^2\right)^{\!1/2} + C_f\mu \leq \varepsilon.
\]
This involves two contributions: $C_f\mu$ from the triangle inequality~\eqref{eq:full_exp}, and the the square root term from $\grad F_\mu$. We allocate $\varepsilon/2$ to each.

\noindent\emph{Choice of $\mu$.} The constraint $C_f\mu \leq \varepsilon/2$ requires $\mu \leq \varepsilon/(2C_f)$. We also need $\mu \leq \delta$ for the subdifferential containment $\goldsteinfullmu F(x) \subseteq \goldsteinfull F(x)$. Therefore we set
\begin{equation}\label{eq:mu_full}
    \mu = \min\!\left(\delta,\; \frac{\varepsilon}{2C_f}\right).
\end{equation}
With this choice, the term $C_p^2\mu^2$ satisfies $C_p^2\mu^2 \leq C_p^2\varepsilon^2/(4C_f^2) < \varepsilon^2/16$, as
\begin{equation*}
    \frac{C_f}{C_p} = \frac{(1+L_y)L_g(1+2L_y)}{L_g(1+L_y)L_y} = \frac{1+2L_y}{L_y} > 2,
\end{equation*}
so $C_p < C_f/2$ strictly. If we can ensure $2\sqrt{\frac{2(\Delta + 2 L_f L_y \mu)\,L_F\,\sigma^2_{\dx}}{T}} \leq \frac{\varepsilon^2}{8}$, it follows that the two terms in the square root contribute at most $\sqrt{\varepsilon^2/16 + \varepsilon^2/8} < \varepsilon/2$.

\noindent\emph{Choice of $T$.} It remains to ensure
\[
    2\sqrt{\frac{2(\Delta + 2 L_f L_y \mu)\,L_F\,\sigma^2_{\dx}}{T}} \leq \frac{\varepsilon^2}{8}, \qquad \text{i.e.,} \qquad T \geq \frac{512\,(\Delta + 2 L_f L_y \mu)\,L_F\,\sigma^2_{\dx}}{\varepsilon^4}.
\]
We simplify $\Delta_\mu$, $L_F$, and $\sigma^2_{\dx}$ in terms of $\mu$ and $\dx$: Since $\dx\ge 1$, $\sigma^2_{\dx} = k_2 \dx L_f^2 L_y^2 + L_f^2(1+2L_y) = \mc{O}(\dx)$, hence
\begin{equation*}
    \frac{512\,\Delta_\mu\,L_F\,\sigma^2_{\dx}}{\varepsilon^4} = \mc{O}\!\left(\frac{(1+\mu)(1+\sqrt{\dx}/\mu)\,\dx}{\varepsilon^4}\right) = O\!\left(\frac{\dx + \dx^{3/2}/\mu + \mu\dx + \dx^{3/2}}{\varepsilon^4}\right).
\end{equation*}
Under the assumption $\mu \leq 1$, all four terms are absorbed by $\dx^{3/2}/\mu$:
\begin{equation*}
    T = \mc{O}\!\left(\frac{\dx^{3/2}}{\mu\,\varepsilon^4}\right).
\end{equation*}
This yields the desired result.
\end{proof}

\end{document}